\author{Florin Ambro} 
\address{Institute of Mathematics ``Simion Stoilow'' of the Romanian Academy\\
P.O. BOX 1-764, RO-014700 Bucharest\\ 
Romania.}
\email{florin.ambro@imar.ro}
\newcommand{\isoto}{{\overset{\sim}{\rightarrow}}}
\newcommand{\C}{{\mathbb C}}
\newcommand{\Q}{{\mathbb Q}}
\newcommand{\Z}{{\mathbb Z}}
\newcommand{\N}{{\mathbb N}}
\newcommand{\R}{{\mathbb R}}
\newcommand{\bA}{{\mathbb A}} 
\newcommand{\cA}{{\mathcal A}}
\newcommand{\cB}{{\mathcal B}}
\newcommand{\cC}{{\mathcal C}}
\newcommand{\cF}{{\mathcal F}}
\newcommand{\cL}{{\mathcal L}}
\newcommand{\cO}{{\mathcal O}}
\newcommand{\cT}{{\mathcal T}}
\newcommand{\cX}{{\mathcal X}}
\newcommand{\bF}{{\mathbf F}}
\newcommand{\bH}{{\mathbb H}}  
\newcommand{\fm}{{\mathfrak m}}
\newcommand{\Char}{\operatorname{char}}
\newcommand{\Cl}{\operatorname{Cl}}
\newcommand{\Der}{\operatorname{Der}}
\newcommand{\dv}{\operatorname{div}}
\newcommand{\emb}{\operatorname{emb}}
\newcommand{\Hom}{\operatorname{Hom}}
\newcommand{\mult}{\operatorname{mult}}
\newcommand{\ord}{\operatorname{ord}}
\newcommand{\Sing}{\operatorname{Sing}}
\newcommand{\Spec}{\operatorname{Spec}}
\newcommand{\Supp}{\operatorname{Supp}}
\theoremstyle{plain}
\newtheorem{thm}{Theorem}[section]
\newtheorem{lem}[thm]{Lemma}
\newtheorem{cor}[thm]{Corollary}
\newtheorem{prop}[thm]{Proposition}
\theoremstyle{definition}
\newtheorem{exmp}[thm]{Example}
\newtheorem{rem}[thm]{Remark}
\newtheorem{ack}{Acknowledgments}
\begin{document}

\bibliographystyle{amsalpha+}
\title{Cyclic covers and toroidal embeddings}
\maketitle

\begin{abstract} 
We present an improved version of the cyclic covering trick, which works
inside the category of toroidal embeddings.
\end{abstract} 



\footnotetext[1]
{
This work was supported by a grant of the 
Romanian National Authority for Scientific Research, 
CNCS - UEFISCDI, project number PN-II-RU-TE-2011-3-0097.
}
\footnotetext[2]{2010 Mathematics Subject Classification. 
Primary: 14E20; Secondary: 14F17.}


\section*{Introduction}


Cyclic covers are a useful tool in algebraic geometry. The simplest example is 
the field extension 
$$
K\subset K(\sqrt[n]{\varphi})
$$
obtained by adjoining to a field the root of an element. For example,
the equation $t^n-\varphi \prod_i z_i^{m_i}$ over $K$ simplifies to 
$t^n-\prod_i z_i^{m_i}$ over $K(\sqrt[n]{\varphi})$.

In classical algebraic geometry, cyclic covers were used to construct new 
examples from known ones. Given a complex projective variety $X$, a torsion
line bundle $L$ over $X$ induces canonically an \'etale cyclic Galois covering $\pi\colon X'\to X$
such that $\pi^*L$ becomes trivial. If $L$ has torsion index $r$ 
and $s\in \Gamma(X,L^r)$ is nowhere zero, the covering can be constructed as the 
$r$-th root of $s$ (as in the function field case, the pullback of $s$ becomes an $r$-th 
power of a section of the pullback of $L$). We may denote it by $\pi\colon X[\sqrt[r]{s}]\to X$.
The isomorphism class of $\pi$ does not depend on the choice of $s$, since $X$ being 
compact, every two nowhere zero sections differ by a non-zero constant. 

Many invariants of $X'$ can be read off those
of $X$, but with coefficients in negative powers of $L$. For example,
$$
\pi_*\Omega^p_{X'}=\oplus_{i=0}^{r-1}\Omega^p_X\otimes L^{-i}.
$$
So one may construct manifolds with prescribed invariants by taking roots of
torsion line bundles on known manifolds. The process may be reversed: 
known statements on the invariants of $X'$ translate into similar statements on $X$, 
twisted by negative powers of $L$.
For example, the K\"ahler differential of $X'$ decomposes into integrable flat connections
on $L^{-i}$, so that $\oplus_{i=0}^{r-1}\Omega^\bullet_X(L^{-i})$ is the 
Hodge complex $\pi_*\Omega^\bullet_{X'}$ on $X$. In particular, the $E_1$-degeneration for 
$(\Omega^\bullet_{X'},F)$ translates into the $E_1$-degeneration for 
$(\Omega^\bullet_{X'}(L^{-i}),F)$, for every $i$.
This exchange of information between the total and base space of a cyclic cover is called
the {\em cyclic covering trick} (cf. sections 1 and 2 of~\cite{EVlect} for example).

The range of applications of the cyclic covering trick extends dramatically if 
$s$ is allowed to have zeros. In this case $s$ is a non-zero global section of
the $n$-th power of some line bundle $L$ on $X$. The $n$-th root of $s$ is defined 
just as above. We obtain for example the same formula
$$
\pi_* \cO_{ X[\sqrt[n]{s}] } = \oplus_{i=0}^{n-1} L^{-i}.
$$
The morphism $\pi$ is still cyclic Galois and flat, but it ramifies over the zero locus
of $s$. The total space $X[\sqrt[n]{s}]$ may be disconnected (even if $s$ vanishes nowhere), 
it may have several irreducible components, and it may even have singularites over 
the zero locus of $s$. These singularities are 
partially resolved by the normalization $\bar{X}[\sqrt[n]{s}] \to X[\sqrt[n]{s}]$. 
The induced morphism $\bar{\pi}\colon \bar{X}[\sqrt[n]{s}] \to X$ is cyclic Galois and flat, 
and one computes
$$
\bar{\pi}_* \cO_{ \bar{X}[\sqrt[n]{s}] } = \oplus_{i=0}^{n-1} L^{-i}(\lfloor \frac{i}{n} Z(s)\rfloor).
$$
Here $Z(s)$ is the effective Cartier divisor cut out by $s$, and the round down
of the $\Q$-divisor $\frac{i}{n} Z(s)$ is defined componentwise. If $\Supp Z(s)$ has no singularities, then 
$\bar{X}[\sqrt[n]{s}]$ has no singularities. Differential forms or vector fields
on $\bar{X}[\sqrt[n]{s}]$ are computed in terms of $X,L$, and the $\Q$-divisor $\frac{1}{n} Z(s)$.
For example
$$
\bar{\pi}_* \Omega^p_{ \bar{X}[\sqrt[n]{s}] } = \oplus_{i=0}^{n-1} \Omega^p_X(\log 
\Supp\{\frac{i}{n} Z(s) \})\otimes L^{-i}(\lfloor \frac{i}{n} Z(s)\rfloor),
$$
where $\{\frac{i}{n} Z(s) \}$ is the fractional part of the $\Q$-divisor $\frac{i}{n} Z(s)$,
defined componentwise, and for a reduced divisor $\Sigma$ on $X$, $\Omega^p_X(\log \Sigma)$
denotes the sheaf of differential $p$-forms $\omega$ such that both $\omega$ and $d\omega$
are regular outside $\Sigma$, and have at most logarithmic poles along the components of $\Sigma$. 
If the singularities of $\Supp Z(s)$ are at most simple normal crossing, then 
$\bar{X}[\sqrt[n]{s}]$ has at most quotient singularities, and if $Y\to \bar{X}[\sqrt[n]{s}]$ is
a desingularization, with $\nu\colon Y\to X$ the induced generically finite morphism,
then
$$
\nu_* \Omega^p_Y = \oplus_{i=0}^{n-1} \Omega^p_X(\log 
\Supp\{\frac{i}{n} Z(s) \})\otimes L^{-i}(\lfloor \frac{i}{n} Z(s)\rfloor).
$$
This formula is behind the vanishing theorems
used in birational classification (see~\cite{EVlect,Kol95,Kol97}). Statements on divisors of the form
$K_X+\sum_j b_j E_j+T$, with $X$ nonsingular, $\sum_j E_j$ simple normal crossing,
$b_j\in [0,1]$, and $T$ a torsion $\Q$-divisor, are reduced to similar statements
on $Y$ with $b_j\in \{0,1\}$ and $T=0$.

Cyclic covers also appear in semistable reduction~\cite{KKMS}. In its simplest form, a complex
projective family over the unit disc $f\colon \cX\to \Delta$ has nonsingular general fibers 
$\cX_t \ (t\ne 0)$, while the special fiber $\cX_0$ is locally cut out by monomials 
$\prod_{i=1}^d z_i^{m_i}\ (m_i\in \N)$ with respect to local coordinates $z_1,\ldots,z_d$.
The family is semistable if moreover $\cX_0$ is reduced. If we base change 
with $\sqrt[n]{t}$ (with $n$ divisible by all multiplicities $m_i$),
and normalize $\tilde{\cX}\to \cX\times_\Delta \tilde{\Delta}$, the new family 
$\tilde{\cX}\to \tilde{\Delta}$ has reduced special fiber $\tilde{\cX}_0$, and 
$\tilde{\cX}\setminus \tilde{\cX}_0\subset \tilde{\cX}$ is a {\em quasi-smooth toroidal embedding}.
If the irreducible components of $\cX_0$ are nonsingular, the toroidal embedding
is also strict and $\tilde{\cX}$ admits a combinatorial desingularization.
An equivalent description of $\tilde{\cX}$ is the normalization of the 
$n$-th root of $f$, viewed as a holomorphic function on $\cX$.
Therefore the local computations of~\cite{KKMS} give in fact the following statement:
if $X$ is complex manifold, and $0\ne s\in \Gamma(X,L^n)$ is such that 
$\Sigma=\Supp Z(s)$ is a normal crossing divisor, then $\bar{X}[\sqrt[n]{s}]\setminus 
\bar{\pi}^{-1}(\Sigma)\subset \bar{X}[\sqrt[n]{s}]$ is a quasi-smooth toroidal embedding, 
and $\bar{\pi}$ is a toroidal morphism.

Cyclic covers are used to classify the singularities that appear
in the birational classification of complex manifolds. Such singularities $P\in X$
are normal, and the canonical Weil divisor $K_X$ is a torsion element of
$\Cl(\cO_{X,P})$. If $r$ is the torsion index, there exists a rational function 
$\varphi\in \C(X)^*$ such that $rK_X=\dv(\varphi)$. The normalization of 
$X$ in the Kummer extension $\C(X)\to \C(X)(\sqrt[r]{\varphi})$ becomes
a cyclic cover $P'\in X'\stackrel{\pi}{\to} P\in X$. It is called the 
{\em index one cover} of $P\in X$, since being \'etale in codimension one,
$K_{X'}=\pi^*K_X\sim 0$. 
The known method to classify $P\in X$ is to first classify the index one cover,
and then understand all possible actions of cyclic groups (see~\cite{YPG87}).

We have discussed so far roots of rational functions, (normalized) roots of multi sections 
of line bundles, and index one covers of torsion $\Q$-divisors on normal varieties. 
This paper gives a unified treatment of all these concepts, based on {\em normalized
roots of rational functions on normal varieties}. 
Moreover, we show that the cyclic covering trick can be performed inside the category of 
quasi-smooth toroidal embeddings. In order to prove vanishing theorems, we no longer have to
assume that the base is nonsingular, or to resolve the singularities of the total space of the covering.

To state the main results of this paper, let $k$ be an algebraically closed field. 

\begin{thm}\label{m1}
Let $X/k$ be an normal algebraic variety. Let $\varphi$ be an invertible rational function on $X$,
let $n$ be a positive integer such that  $\Char k \nmid n$. Denote 
$D=\frac{1}{n}\dv(\varphi)$, so that $D$ is a $\Q$-Weil divisor on $X$ with $nD\sim 0$. 
Let $\pi\colon Y\to X$ be the normalization of $X$
with respect to the ring extension 
$$
k(X)\to \frac{k(X)[T]}{(T^n-\varphi)}.
$$ 
The right hand side is a product of fields (the function fields of the irreducible
components of $Y$), and $Y$ identifies with the disjoint union of the 
normalization of $X$ in each field. By construction, $Y/k$ is a normal algebraic variety 
(possibly disconnected). 
 
\begin{itemize}
\item[a)] The class of $T$ becomes an invertible rational function $\psi$ 
on $Y$ such that $\psi^n=\pi^*\varphi$. We have $\pi^*D=\dv(\psi)$ and
$$
\pi_*\cO_Y=\oplus_{i=0}^{n-1}\cO_X(\lfloor iD\rfloor)\cdot \psi^i.
$$
The morphism $\pi$ is \'etale exactly over $X\setminus \Supp\{D\}$, where $\{D\}$ is the fractional
part of the $\Q$-divisor $D$, defined componentwise. 
It is flat if and only if the Weil divisors $\lfloor iD\rfloor \ (0<i<n)$ are Cartier.

\item[b)] Suppose $U\subseteq X$ is a quasi-smooth toroidal embedding and $D|_U$ has integer coefficients.
Then $\pi^{-1}(U)\subseteq Y$ is a quasi-smooth toroidal embedding, and $\pi$ is a toroidal morphism.
Denote $\Sigma_X=X\setminus U$ and $\Sigma_Y=Y\setminus \pi^{-1}(U)$.
Then $\pi^*\tilde{\Omega}^p_{X/k}(\log \Sigma_X) \isoto \tilde{\Omega}^p_{Y/k}(\log \Sigma_Y)$, 
and by the projection formula
$$
\pi_*\tilde{\Omega}^p_{Y/k}(\log \Sigma_Y)=\tilde{\Omega}^p_{X/k}(\log \Sigma_X)\otimes \pi_*\cO_Y.
$$ 
\end{itemize}
\end{thm}

\begin{thm}\label{m2}
Suppose $\Char k =0$. Let $U\subseteq X$ and $U'\subseteq X'$ be toroidal embeddings 
over $k$, let $\mu\colon X'\to X$ be a proper morphism which induces 
an isomorphism $U'\isoto U$. Denote $\Sigma_X=X\setminus U$ and $\Sigma_{X'}=X'\setminus \Sigma_{X'}$.
Then 
$$
R^q\mu_*\tilde{\Omega}^p_{X'/k}(\log \Sigma_{X'}) =
\left\{ \begin{array}{lll}
\tilde{\Omega}^p_{X/k}(\log\Sigma_X)                      & q=0   \\
0              							                            &  q\ne 0
\end{array} \right.
$$
\end{thm}

Theorem~\ref{m1} generalizes the known cyclic covering trick (see~\cite[Section 3]{EVlect}, especially
3.3-3.11), where the base space $X$ is assumed nonsingular, to the case when $X$ is just normal.
Statement a) is elementary.
The toroidal part of b) is implicit in~\cite{KKMS} if $(X,\Sigma_X)$ is log smooth, as already 
mentioned. The general case (Theorem~\ref{tcr}) is proved by reduction to the following fact: the 
normalized root of a toric variety with respect to a torus character consists of several isomorphic
copies of a toric morphism (Proposition~\ref{nru}).
The sheaf $\tilde{\Omega}^p_{X/k}(\log \Sigma_X)$ consists of the rational $p$-forms $\omega$
of $X$ such that both $\omega,d\omega$ are regular near the prime divisors of $X$ outside $\Sigma_X$,
and have at most simple poles at the prime components of $\Sigma_X$. It is called the sheaf of logarithmic 
$p$-forms of $(X/k,\Sigma_X)$, in the sense of Zariski-Steenbrink. It is constructed by 
ignoring closed subsets of $X$ of codimension at least two, so in general it is singular.
But if $X\setminus \Sigma_X\subset X$ is a toroidal embedding, it is locally free~\cite{Ste76,Dan78}.
If $X$ is nonsingular and $\Sigma_X$ is a normal crossing divisor, this sheaf
coincides with the sheaf of logarithmic forms $\Omega^p_{X/k}(\log\Sigma_X)$ in the sense of 
Deligne (see~\cite{EVlect} for the algebraic version, with $\Sigma_X$ assumed simple normal crossing). 
If $\Sigma_X=0$, then $\tilde{\Omega}^p_{X/k}=\tilde{\Omega}^p_{X/k}(\log 0)$ is the double dual of
the usual sheaf of K\"ahler differentials $\Omega^p_{X/k}$.
We note that differential forms or vector fields on $Y$ can be computed without the 
toroidal assumption (Lemma~\ref{dvc}).

Theorem~\ref{m2} is the invariance of the logarithmic sheaves under different toroidal embeddings,
proved by Esnault and Viehweg~\cite[Lemma 1.5]{RCII} in the case when $X$ is projective nonsingular 
and $\Sigma_X$ has normal crossings. One corollary is that
$$
H^q(X,\tilde{\Omega}^p_{X/k}(\log\Sigma_X))\to H^q(X',\tilde{\Omega}^p_{X'/k}(\log\Sigma_{X'}))
$$ 
is an isomorphism for every $p,q$. If $X$ is proper and $(X,X\setminus U)$ is log smooth, the
corollary follows from the $E_1$-degeneration of the spectral
sequence induced in hypercohomology by the logarithmic De Rham complex endowed with the
naive filtration (Deligne~\cite{Del71}).
If $X$ is projective and $X\setminus U$ is a simple normal crossing divisor, 
Esnault-Viehweg~\cite[Lemma 1.5]{RCII} proved that the corollary implies Theorem~\ref{m2}.
We use the same idea, combined with a result of Bierstone-Milman~\cite{BM11A}, in order
to compactify strict log smooth toroidal embeddings (Corollary~\ref{SNCemb}).

The normalization of roots of multi sections of line bundles on normal varieties,
and the index one covers torsion $\Q$-divisors on normal varieties, are both 
examples of normalized roots of rational functions. In practice, index one covers 
are most useful. They preserve irreducibility, so one can work in the classical 
setting of function fields. Their drawback is that they do not commute with base change
to open subsets. For this reason, at least for proofs, we need to consider 
normalized roots of rational functions, which commute with \'etale base change.

\begin{ack} Part of this work was done while visiting NCTS (Mathematics Division, Taipei Office)
and IMS (National University of Singapore), and I am grateful to Jungkai Alfred Chen
and De-Qi Zhang for hospitality. I would also like to thank Lucian B\u{a}descu, 
Steven Lu and Rita Pardini for useful discussions.
\end{ack}


\section{Preliminaries}


We consider varieties (reduced, possibly reducible), defined over an algebraically closed field $k$.


\subsection{Zariski-Steenbrink differentials~\cite{Zar69,Ste76}}


Let $X/k$ be a normal variety. Its connected
components coincide with its irreducible components. Let $k(X)$ be the ring of rational
functions of $X$, consisting of functions regular on a dense open subset of $X$. It
identifies with the product of the function fields of the irreducible components of $X$.
A rational function is invertible if and only if it is not zero on each irreducible component.

Let $\omega\in \Omega^p_{k(X)/k}$ be a rational differential $p$-form. Let $E\subset X$ 
be a prime divisor. The form $\omega$ is regular at $E$ if it can be written as a sum of
forms $f_0 \cdot df_1\wedge \cdots \wedge df_p$, with $f_i\in \cO_{X,E}$. The form $\omega$
has at most a logarithmic pole at $E$ if both $\omega$ and $d\omega$ have at most a simple
pole at $E$.
If $t$ is a local parameter at the generic point of $E$, this is equivalent to the existence of 
a decomposition
$
\omega=\frac{dt}{t}\wedge \omega^{p-1}+\omega^p,
$
with $\omega^{p-1},\omega^p$ rational differentials regular at $E$.
If $\bar{E}\to E$ is the normalization, the restriction $\omega^{p-1}|_{\bar{E}}$ is 
independent of the decomposition, called the residue of $\omega$ at $E$. The 
residue is zero if and only if $\omega$ is regular at $E$.

For an open subset $U\subseteq X$, let $\Gamma(U,\tilde{\Omega}^p_{X/k})$ consist
of the rational differential $p$-forms which are regular at each prime divisor of $X$
which intersects $U$. This defines a coherent $\cO_X$-module $\tilde{\Omega}^p_{X/k}$,
called the {\em sheaf of $p$-differential forms of $X/k$, in the sense of Zariski-Steenbrink}. If 
$j\colon X^0\subset X$ is the nonsingular locus of $X$, 
$\tilde{\Omega}^p_{X/k}=j_*(\Omega^p_{X^0/k})$.
If $D$ is a Weil divisor on $X$, let $\Gamma(U,\tilde{\Omega}^p_{X/k}(D))$ consist
of the rational differential $p$-forms $\omega$ such that $t_E^{\mult_E D}\omega$
is regular at $E$, for every prime divisor $E$, with local parameter $t_E$. This defines 
a coherent $\cO_X$-module $\tilde{\Omega}^p_{X/k}(D)$. We have $\tilde{\Omega}^p_{X/k}(D)=
j_*(\Omega^p_{X^0/k}\otimes\cO_{X^0}(D|_{X^0}))$.

Let $\Sigma$ be a reduced Weil divisor on $X$, or equivalently, a finite set of prime 
divisors on $X$. For an open subset $U\subseteq X$, let $\Gamma(U,\tilde{\Omega}^p_{X/k}(\log \Sigma))$ consist of the rational differential $p$-forms $\omega$ such that for every prime divisor 
$E$ which intersects $U$, $\omega$ is regular (has at most a logarithmic pole) at $E$ if 
$E\notin \Sigma$ ($E\in \Sigma$). 
This defines a coherent $\cO_X$-module $\tilde{\Omega}^p_{X/k}(\log\Sigma)$,
called the {\em sheaf of logarithmic $p$-differential forms of $(X/k,\Sigma)$, in the sense 
of Zariski-Steenbrink}. For a Weil divisor $D$ on $X$, we can similarly define 
$\tilde{\Omega}^p_{X/k}(\log\Sigma)(D)$.

The tangent sheaf $\cT_{X/k}$ is already $S_2$-saturated, since $X$ is normal:
a derivation $\theta$ of $k(X)/k$ is regular on an open subset $U\subseteq X$ if
and only if it is regular at each prime of $X$ which intersects $U$.
For an open subset $U\subseteq X$, let $\Gamma(U,\tilde{\cT}_{X/k}(-\log \Sigma))$ 
consist of the derivations $\theta\in \Gamma(U,\cT_{X/k})$ such that for every prime 
divisor $E\in \Sigma$ which intersects $U$, $\theta$ preserves the maximal ideal $\fm_{X,E}$.
This defines a coherent $\cO_X$-module $\tilde{\cT}_{X/k}(-\log\Sigma)$,
called the {\em sheaf of logarithmic derivations of $(X,\Sigma)$, in the sense 
of Zariski-Steenbrink}. For a Weil divisor $D$ on $X$, we can similarly define 
$\tilde{\cT}_{X/k}(D),\tilde{\cT}_{X/k}(-\log\Sigma)(D)$.

\begin{lem}\label{eBC}
Let $u\colon X'\to X$ be an \'etale morphism. In particular, $X'$ is normal.
Let $\Sigma$ be a reduced Weil divisor on $X$, denote $\Sigma'=u^{-1}(\Sigma)$.
Then we have base change isomorphisms
$$
u^*\tilde{\Omega}^p_{X/k}\isoto \tilde{\Omega}^p_{X'/k} \text{ and }
u^*\tilde{\Omega}^p_{X/k}(\log \Sigma) \isoto  \tilde{\Omega}^p_{X'/k}(\log \Sigma').
$$
Similar statements hold for twists with Weil divisors.
\end{lem}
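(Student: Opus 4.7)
The plan is to reduce all four isomorphisms to the classical case of Deligne log differentials along a simple normal crossing divisor on a smooth variety, and then to extend across a codimension-two locus using the $S_2$-property of the Zariski--Steenbrink sheaves.

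First, let $V\subseteq X$ be the open locus on which $X$ is smooth and $\Sigma$ is a simple normal crossing divisor. Since $X$ is normal and $\Sigma$ is a reduced divisor, the complement $X\setminus V$ has codimension at least two in $X$. On $V$ the sheaf $\tilde{\Omega}^p_{X/k}(\log\Sigma)$ restricts to the locally free Deligne log differentials $\Omega^p_{V/k}(\log(\Sigma|_V))$, by the standard computation at the generic point of each component of $\Sigma|_V$. Since $\tilde{\Omega}^p_{X/k}(\log\Sigma)$ is $S_2$ by construction, one has the clean description $\tilde{\Omega}^p_{X/k}(\log\Sigma)=j_*\Omega^p_{V/k}(\log(\Sigma|_V))$ with $j\colon V\hookrightarrow X$, and an analogous description for the untwisted sheaf and for twists by a Weil divisor $D$.

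Next, set $V'=u^{-1}(V)\subseteq X'$. Since $u$ is \'etale, $V'$ is smooth, $u$ preserves codimensions so $\codim_{X'}(X'\setminus V')\ge 2$, and the preimage of a simple normal crossing divisor under an \'etale morphism is again simple normal crossing (this is local \'etale, where $u$ trivializes). Hence $V'$ plays for $(X',\Sigma')$ the role $V$ plays for $(X,\Sigma)$. The standard compatibility of K\"ahler differentials with \'etale base change yields an isomorphism
\[
(u|_{V'})^*\Omega^p_{V/k}(\log(\Sigma|_V))\isoto \Omega^p_{V'/k}(\log(\Sigma'|_{V'})),
\]
which, by adjunction against the $S_2$ target, extends uniquely to a morphism $u^*\tilde{\Omega}^p_{X/k}(\log\Sigma)\to \tilde{\Omega}^p_{X'/k}(\log\Sigma')$ of coherent sheaves on $X'$.

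To finish, I would observe that $u^*$ of an $S_2$ coherent sheaf is again $S_2$ on $X'$: the \'etale morphism $u$ is flat and the fibers are reduced of dimension zero, so the depth-along-fibers formula gives $\depth_{x'}(u^*\cF)=\depth_{u(x')}\cF$ while $\dim \cO_{X',x'}=\dim \cO_{X,u(x')}$. Consequently both source and target are $S_2$ on the normal variety $X'$, they agree on $V'$, and the codimension-two complement forces the extended morphism to be an isomorphism on all of $X'$. The same argument, with the only change being that the twist $D$ simply modifies the pole/zero order condition at each prime divisor, gives the statement for twists by Weil divisors. The main obstacle I anticipate is precisely this $S_2$-stability of the source under \'etale pullback, because this is the one point where the flatness of $u$ is truly essential; everything else collapses to the fact that $u$ is locally an open immersion in the \'etale topology.
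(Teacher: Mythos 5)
Your argument is correct and is essentially the paper's: both proofs restrict to a big open subset $V\subseteq X$ over which the pair is (log) smooth, use \'etale invariance of the usual (log) differentials there, and exploit that the Zariski--Steenbrink sheaves are the pushforwards of their restrictions to $V$, so that the isomorphism extends across the codimension-two complement by flatness of $u$. The only real difference is at that extension step: the paper simply invokes flat base change for the open immersion, $u^*{i_V}_*\Omega^p_{V/k}(\log\Sigma|_V)\isoto {i_{V'}}_*v^*\Omega^p_{V/k}(\log\Sigma|_V)$, whereas you re-derive this commutation by hand through the $S_2$ property of the sheaves and its preservation under \'etale pullback (a correct, if slightly longer, route).
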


\begin{proof} Denote $U=X\setminus \Sing X$ and $U'=X'\setminus \Sing X'$.
Since $u$ is smooth, we have $U'=u^{-1}(U)$. We obtain a cartesian diagram
\[ 
\xymatrix{
 U' \ar[d]_{i_{U'}}  \ar[r]^v & U  \ar[d]^{i_U}  \\
 X'    \ar[r]_u        &  X      
} \]
Since $u$ is flat, the base change homomorphism
$u^*{i_U}_*\Omega^p_{U/k}\to {{i_{U'}}_*}v^*\Omega^p_{U/k}$ is an isomorphism.
Since $v$ is smooth, $v^*\Omega^p_{U/k}\to \Omega^p_{U'/k}$ is an isomorphism.
Therefore the base change isomorphism becomes 
$u^*\tilde{\Omega}^p_{X/k}\isoto \tilde{\Omega}^p_{X'/k}$.

Since $u$ is \'etale, we have $\Sigma'=u^{-1}(\Sigma)$, and $\Sigma'\to \Sigma$ is \'etale.
In particular, $\Sing \Sigma'=u^{-1}(\Sing \Sigma)$.
Denote $V=X\setminus (\Sing X\cup \Sing \Sigma)$ and $V'=X'\setminus (\Sing X'\cup \Sing \Sigma')$.
We obtain a cartesian diagram
\[ 
\xymatrix{
 V' \ar[d]_{i_{V'}}  \ar[r]^v & V  \ar[d]^{i_V}  \\
 X'    \ar[r]_u        &  X      
} \]
Since $u$ is flat, the base change homomorphism
$u^*{i_V}_*\Omega^p_{V/k}(\log\Sigma|_V)\to {{i_{V'}}_*}v^*\Omega^p_{V/k}(\log \Sigma|_V)$ 
is an isomorphism. Since $v$ is smooth, $v^*\Omega^p_{V/k}(\log\Sigma|_V) \to \Omega^p_{V'/k}(\log\Sigma'|_{V'})$ 
is an isomorphism. Therefore the base change isomorphism becomes 
$u^*\tilde{\Omega}^p_{X/k}(\log\Sigma)\isoto \tilde{\Omega}^p_{X'/k}(\log\Sigma')$.
\end{proof}

\begin{lem}\label{feBC}
Let $u \colon X'\to X$ be an \'etale morphism. Let $E'$ be a prime divisor on $X'$,
let $E$ be the closure of $u(E')$. Let $\omega$ be a rational $p$-form on $X$.
Then
\begin{itemize}
\item[i)] $\omega$ is regular at $E$ if and only if $u^*\omega$ is regular at $E'$.
\item[ii)] $\omega$ has at most a log pole at $E$ if and only if $u^*\omega$ has at most a 
log pole at $E'$.
\end{itemize}
\end{lem}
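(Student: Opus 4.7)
The plan is to reduce both statements to Lemma~\ref{eBC} by localizing at the generic points $\eta_E$ and $\eta_{E'}$ and then invoking faithful flatness of the induced extension of DVRs.

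First, I set up the local picture. Since $X$ and $X'$ are normal, the local rings $R := \cO_{X,E}$ and $R' := \cO_{X',E'}$ are DVRs. One checks that $u(\eta_{E'})=\eta_E$: because $u$ is flat at $\eta_{E'}$, the dimension formula gives $\dim \cO_{X,u(\eta_{E'})}=\dim \cO_{X',\eta_{E'}}=1$, so $u(\eta_{E'})$ is a codimension-one point lying in the irreducible divisor $E$ and must equal its generic point. Hence $u$ induces a local étale ring homomorphism $R\to R'$, which is automatically faithfully flat (local plus flat) and sends a uniformizer of $R$ to a uniformizer of $R'$ (ramification index one).

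For (i), regularity of $\omega$ at $E$ is literally the condition $\omega\in (\tilde{\Omega}^p_{X/k})_{\eta_E}$, viewed as a submodule of the rational $p$-forms, and similarly for $E'$. Lemma~\ref{eBC} supplies an isomorphism $u^*\tilde{\Omega}^p_{X/k}\isoto \tilde{\Omega}^p_{X'/k}$; taking stalks at $\eta_{E'}$ identifies
$$
(\tilde{\Omega}^p_{X/k})_{\eta_E}\otimes_R R' = (\tilde{\Omega}^p_{X'/k})_{\eta_{E'}}
$$
as submodules of the ambient space of rational $p$-forms. Faithful flatness of $R\to R'$ then implies, for the submodule $M=(\tilde{\Omega}^p_{X/k})_{\eta_E}$ of the rational forms, that $\omega\in M$ if and only if $u^*\omega=\omega\otimes 1\in M\otimes_R R'$. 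This gives (i).

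For (ii), I apply the same argument to the logarithmic sheaf, using Lemma~\ref{eBC} with $\Sigma=\{E\}$ to obtain $u^*\tilde{\Omega}^p_{X/k}(\log E)\isoto \tilde{\Omega}^p_{X'/k}(\log u^{-1}(E))$. The crucial local observation is that among the prime components of $u^{-1}(E)$, only $E'$ passes through the generic point $\eta_{E'}$, so the stalk
$(\tilde{\Omega}^p_{X'/k}(\log u^{-1}(E)))_{\eta_{E'}}$ equals $(\tilde{\Omega}^p_{X'/k}(\log E'))_{\eta_{E'}}$, which by definition is the set of rational forms with at most a logarithmic pole at $E'$. The same faithful flatness argument as in (i) then yields the equivalence. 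The main (minor) obstacle is precisely this identification of stalks, namely checking that extra components of $u^{-1}(E)$ away from $\eta_{E'}$ do not interfere with the log condition read off at $\eta_{E'}$; this is a routine local verification using that $E'$ is prime and $u$ is étale.
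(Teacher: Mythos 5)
Your proof is correct and follows essentially the same route as the paper: both reduce to Lemma~\ref{eBC} and then descend membership in the (log) differential sheaf along the flat local extension $\cO_{X,E}\to\cO_{X',E'}$ at the generic points of $E$ and $E'$. The only difference is cosmetic: you invoke faithfully flat descent of submodule membership directly, whereas the paper bounds poles by $lE$ and argues via the completed local rings, so your variant is, if anything, a slightly cleaner way to finish the same argument.
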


\begin{proof} i) There exists $l\gg 0$ such that $\omega\in \tilde{\Omega}^p_X(lE)_E$. 
By Lemma~\ref{eBC}, the inclusion $\tilde{\Omega}^p_{X/k}\subset \tilde{\Omega}^p_{X/k}(lE)$ becomes after \'etale 
pullback $\tilde{\Omega}^p_{X'/k}\subset \tilde{\Omega}^p_{X'/k}(lE')$. All sheaves that appear 
being coherent, the section
$\omega\in  \tilde{\Omega}^p_{X/k}(lE)_E$ belongs to $(\tilde{\Omega}^p_{X/k})_E$ if and only 
the pullback section $u^*\omega\in  \tilde{\Omega}^p_{X'/k}(lE')_{E'}$ belongs to $(\tilde{\Omega}^p_{X'/k})_{E'}$

ii) The proof in the logarithmic case is similar.
\end{proof}

\begin{lem}\label{eds} 
Let $u \colon X'\to X$ be an \'etale morphism. Let
$D$ be a $\Q$-Weil divisor on $X$. Let $D'=u^*D$ be the pullback $\Q$-Weil divisor, 
defined by restricting to big open subsets. Then 
$$
u^*\cO_X(\lfloor D\rfloor)\isoto \cO_{X'}(\lfloor D'\rfloor).
$$
\end{lem}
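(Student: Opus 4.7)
The plan is to reduce the statement to the case of an integral Weil divisor, and then to obtain the isomorphism by flat base change along the regular loci of $X$ and $X'$. Since $u$ is étale, the morphism preserves multiplicities of Weil divisors: if $E$ is a prime divisor of $X$ with local parameter $t_E$ at its generic point, then for every prime divisor $E'$ of $X'$ mapping dominantly to $E$, the induced map of discrete valuation rings $\cO_{X,E}\to \cO_{X',E'}$ is étale, so $t_E$ remains a uniformizer at $E'$. Consequently $\mult_{E'}(u^*D)=\mult_E D$ for every prime $E'$ lying over $E$, and so $\lfloor u^*D\rfloor=u^*\lfloor D\rfloor$ as integral Weil divisors. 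In particular the right hand side of the claim coincides with $\cO_{X'}(u^*\lfloor D\rfloor)$, and we are reduced to showing
\[
u^*\cO_X(F) \isoto \cO_{X'}(u^*F)
\]
for every integral Weil divisor $F$ on $X$.

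For the reduced problem, let $j\colon X^0\hookrightarrow X$ be the inclusion of the regular locus; this is a big open subset, and $F|_{X^0}$ is Cartier because $X^0$ is locally factorial. Since étale morphisms reflect and preserve smoothness, the regular locus of $X'$ satisfies $X'^0=u^{-1}(X^0)$, and it is a big open subset of $X'$ because étale maps preserve codimension. Let $j'\colon X'^0\hookrightarrow X'$ be the inclusion, and let $v\colon X'^0\to X^0$ be the induced étale morphism, so that we have a cartesian square exactly as in the proof of Lemma~\ref{eBC}. Both sheaves $\cO_X(F)$ and $\cO_{X'}(u^*F)$ are by definition $j_*$ and $j'_*$ of genuine line bundles on the regular loci, namely $\cO_{X^0}(F|_{X^0})$ and $\cO_{X'^0}(v^*(F|_{X^0}))$.

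The remaining step is to combine these identifications. Since $u$ is flat, the base change map $u^*j_*\to j'_*v^*$ is an isomorphism on coherent sheaves, so
\[
u^*\cO_X(F)=u^*j_*\cO_{X^0}(F|_{X^0}) \isoto j'_*v^*\cO_{X^0}(F|_{X^0}).
\]
As $v$ is a morphism between regular schemes pulling back the Cartier divisor $F|_{X^0}$, the usual identification $v^*\cO_{X^0}(F|_{X^0})=\cO_{X'^0}(v^*(F|_{X^0}))$ holds. Pushing forward by $j'_*$ then gives $\cO_{X'}(u^*F)$, which concludes the argument.

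The only real subtlety, and the step I expect to require the most care, is the componentwise identity $\lfloor u^*D\rfloor=u^*\lfloor D\rfloor$: one must verify that the pullback $u^*D$, defined a priori only on the locus where $D$ is $\Q$-Cartier and then extended by $S_2$-saturation, agrees with the naive componentwise pullback, which is ensured by the étale preservation of uniformizers. Everything else is a formal consequence of flat base change, the equality $X'^0=u^{-1}(X^0)$, and the fact that divisorial sheaves of integral Weil divisors are determined by their restrictions to any big open subset.
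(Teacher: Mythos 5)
Your proposal is correct and follows the same route as the paper's proof: round down commutes with pullback because an \'etale map is unramified in codimension one, the divisorial sheaves are determined on the smooth loci where the divisor is Cartier, and flat base change extends the identification from the smooth loci to $X'$ and $X$. The paper states this in three lines; you have merely supplied the details (preservation of uniformizers, $X'^0=u^{-1}(X^0)$, and the base change map $u^*j_*\to j'_*v^*$), all of which are accurate.
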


\begin{proof}
Restrict to the smooth loci of $X'$ and $X$. Round down commutes with pullback
since $u$ is unramified in codimension one. By flat base change, the isomorphism
extends to $X'$ and $X$.
\end{proof}


\subsection{Zariski-Steenbrink differentials and derivations on toric varieties~\cite{Dan78,Oda88}}

Let $M$ be a lattice, with dual lattice 
$N=M^*$. Let $T=T_N=\Hom(M,k^*)$ be the induced torus over $k$. The torus 
acts on the space of global regular functions, with eigenspace decomposition
$$
\Gamma(T,\cO_T)=\oplus_{m\in M} k\cdot \chi^m.
$$
Recall that each element $m\in M$ induces by evaluation a torus character $\chi^m\colon T\to k^*$.
Denote $\alpha_m=\frac{d(\chi^m)}{\chi^m}\in \Gamma(T,\Omega_{T/k})$.
The application $m\mapsto \alpha_m$ is additive, and induces an isomorphism
$$
\cO_T\otimes_\Z M\isoto \Omega_{T/k}, \ 1\otimes m\mapsto \alpha_m.
$$
This follows from computations on the affine space, since a choice of basis 
$m_1,\ldots,m_n$ of $M$, with induced characters $z_i=\chi^{m_i}$, identifies
$T$ with the principal open set $D(\prod_{i=1}^nz_i)\subset \bA^n_k$.
In particular, we obtain isomorphisms
$$
\cO_T\otimes_\Z \wedge^pM\isoto \Omega^p_{T/k}, \ 1\otimes m\mapsto \alpha_m.
$$
Passing to global sections, the image of $k\otimes_\Z M$ is $V\subset
\Gamma(T,\Omega_{T/k})$, the subspace of global $1$-forms invariant under the 
torus action. We have induced eigenspace decompositions
$$
\Gamma(T,\Omega^p_{T/k})=\oplus_{m\in M} \chi^m \cdot \wedge^p V.
$$
So every regular form $\omega\in \Gamma(T,\Omega^p_{T/k})$ admits a unique decomposition
$$
\omega=\sum_{m\in M}\chi^m\cdot \omega(m) \ (\omega(m)\in \wedge^pV).
$$

For $e\in N$, there exists a unique $k$-derivation $\theta_e$ of $\Gamma(T,\cO_T)$
such that $\theta_e(\chi^m)=\langle m,e\rangle \chi^m$ for every $m\in M$.
The application $\alpha\colon N\to \Gamma(T,\cT_{T/k}),e\mapsto \theta_e$ is additive
and induces an isomorphism of $\cO_T$-modules
$$
1\otimes_\Z \alpha \colon \cO_T\otimes_\Z N\to \cT_{T/k}. 
$$
Passing to global sections, the image of $k\otimes_\Z N$ is $W\subset \Gamma(T,\cT_{T/k})$,
the space of derivations invariant under the torus action. We have an eigenspace decomposition 
$$
\Gamma(T,\cT_{T/k})=\oplus_{m\in M}\chi^m W.
$$ 
So every $k$-derivation $\theta\colon k[M]\to k[M]$ has a unique decomposition
$$
\theta=\sum_{m\in M}\chi^m \theta(m)\ (\theta(m)\in W).
$$

As in~\cite[Lemma 4.3.1]{Dan78} or ~\cite[Proposition 3.1]{Oda88}, we obtain the following
explict formulas:

\begin{lem}\label{4.1} Let $e\in N$ be a primitive vector. Then 
$T_N\subset T_N\emb(\R_{\ge 0}e)=X$ is an affine torus embedding such that 
$X\setminus T$ consists of a unique prime divisor $E=V(e)$, and both $X/k$ and $E/k$ are smooth. 
We have eigenspace decompositions:
\begin{itemize}
\item[a)] $\Gamma(X,\Omega^p_{X/k})=\oplus_{\langle m,e\rangle=0}\chi^m\cdot 
\alpha(k\otimes_\Z \wedge^p(M\cap e^\perp)) \oplus \oplus_{\langle m,e\rangle>0}\chi^m \cdot\wedge^p V$.
\item[b)] $\Gamma(X,\tilde{\Omega}^p_{X/k}(\log E))=\oplus_{\langle m,e\rangle\ge 0}\chi^m\cdot \wedge^p V$.
\item[c)] $\Gamma(X,\cT_{X/k})=
\oplus_{\langle m,e\rangle=-1}\chi^m\cdot k\theta_e \oplus \oplus_{\langle m,e\rangle\ge 0}\chi^m \cdot W$.
\item[d)] $\Gamma(X,\tilde{\cT}_{X/k}(-\log E))= \oplus_{\langle m,e\rangle\ge 0} \chi^m \cdot W$.
\end{itemize}
\end{lem}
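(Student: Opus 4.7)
The plan is to reduce all four statements to a direct computation in coordinates, exploiting the torus action to work one weight at a time. Since $e$ is primitive, I extend $e=e_1$ to a basis $e_1,\ldots,e_n$ of $N$, let $m_1,\ldots,m_n\in M$ be the dual basis, and set $z_i=\chi^{m_i}$. Then $\sigma^\vee=\{m\in M_\R:\langle m,e\rangle\ge 0\}$, so $X=\Spec k[z_1,z_2^{\pm 1},\ldots,z_n^{\pm 1}]$: this is smooth, and the toric boundary is the smooth prime divisor $E=V(z_1)=V(e)$. Under the torus-invariant identifications recalled above, $\{\alpha_{m_i}=dz_i/z_i\}$ is a basis of $V$ and $\{D_{e_i}=z_i\partial_i\}$ is a basis of $W$.

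Each of the four sheaves is $T_N$-equivariant, so its space of global sections decomposes as a direct sum of weight spaces, and I can test membership one weight $m$ at a time. Set $d=\langle m,e\rangle$, so $\chi^m=z_1^d\cdot u$ with $u$ a Laurent monomial in $z_2,\ldots,z_n$ (a unit on $X$). For part (a), I split $\eta\in \wedge^pV$ according to whether it involves $\alpha_{m_1}$: if $\eta\in \wedge^p(k\otimes_\Z(M\cap e^\perp))$, then $\chi^m\eta$ is regular iff $d\ge 0$; if $\eta$ involves $\alpha_{m_1}$, the factor $z_1^d\cdot dz_1/z_1=z_1^{d-1}dz_1$ forces $d\ge 1$, and for $d\ge 1$ every $\eta\in \wedge^pV$ is admissible. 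For part (b), every $\eta\in \wedge^pV$ is itself a log form along $E$ (since $\alpha_{m_1}=dz_1/z_1$ and the remaining $\alpha_{m_i}$ are regular), and multiplication by $z_1^du$ preserves the log condition iff $d\ge 0$; for $d<0$, differentiating produces a pole of order at least two along $E$.

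For part (c), writing $\theta(m)=\sum_i c_iD_{e_i}$ and using $\chi^mD_{e_j}=\chi^{m+m_j}\partial_j$, each summand is regular iff the corresponding coefficient of $\partial_j$ lies in $\cO_X$: for $j\ge 2$ this requires $d\ge 0$, while for $j=1$ it requires only $d\ge -1$, contributing the exceptional summand $\chi^m\cdot kD_e$ precisely when $d=-1$. For part (d), the log condition $\theta(\fm_{X,E})\subset \fm_{X,E}$ becomes $\theta(z_1)\in (z_1)$; computing $\chi^mD_{e_i}(z_1)=\delta_{i1}\chi^{m+m_1}$ shows that the $d=-1$ exceptional eigenvectors from (c) fail this condition, while all weights with $d\ge 0$ satisfy it automatically, yielding (d).

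These verifications are elementary, and I do not expect a real obstacle. The one step meriting mild care is the implicit appeal to $T_N$-equivariance in order to reduce to weight-by-weight analysis, which follows from the functoriality of each sheaf in the toric data combined with the complete reducibility of rational representations of an algebraic torus.
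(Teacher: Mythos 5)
Your proof is correct. The paper in fact gives no proof of Lemma~\ref{4.1} --- it is stated as a standard toric computation (citing Danilov and Oda), and your argument, extending $e$ to a basis of $N$ so that $X\simeq \bA^1\times\mathbb{G}_m^{n-1}$ with $E=V(z_1)$ and then checking each torus weight against the frames $dz_1,\,dz_i/z_i$ resp.\ $\partial_{z_1},\,z_i\partial_{z_i}$, is exactly the computation the paper's preceding discussion intends; the only cosmetic point is that in b), for $\langle m,e\rangle\le -2$ (or when $\alpha_{m_1}$ occurs) the form already has a pole of order $\ge 2$, so the appeal to $d\omega$ is only needed in the residual case $\langle m,e\rangle=-1$ with $\eta$ not involving $\alpha_{m_1}$.
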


Let $\omega\in \Gamma(T,\Omega^p_{T/k})$, and denote
$
\Supp\omega=\{m\in M;\omega(m)\ne 0\}.
$
Let $\theta\in \Gamma(T,\cT_{T/k})$ and define similarly its support.
Let $T=T_N\subseteq X$ be a torus embedding, let $E=V(e)$ be an invariant prime divisor on $X$. 
It corresponds to a primitive vector $e\in N$. The following properties hold:
\begin{itemize}
\item[a)] $\omega$ is regular at $E$ if and only if for every $m\in \Supp \omega$, either
$\langle m,e\rangle>0$, or $\langle m,e\rangle=0$ and $\omega(m)\in \alpha(k\otimes_\Z \wedge^p(M\cap e^\perp))$.
\item[b)] $\omega$ has at most a logarithmic pole at $E$ if and only if $\langle m,e\rangle\ge 0$
for every $m\in \Supp\omega$.
\item[c)] $\theta$ is regular at $E$ if and only if for every $m\in \Supp \omega$, either
$\langle m,e\rangle\ge 0$, or $\langle m,e\rangle=-1$ and $\theta(m)\in k \theta_e$.
\item[d)] $\theta$ is regular logarithmic at $E$ if and only if $\langle m,e\rangle\ge 0$
for every $m\in \Supp\theta$.
\end{itemize}
Properties a)-d) do not depend on the toric model $X$, only on the valuation of $k(X)$
defined by $E$. So they follow from Lemma~\ref{4.1}.

\begin{thm}~\cite[Proposition 3.1]{Oda88}\label{logt}
Let $T_N\subseteq X$ be a torus embedding. The complement $\Sigma=X\setminus T$
is a reduced Weil divisor on $X$, and we have natural isomorphisms
$$
1\otimes_\Z\wedge^p\alpha\colon \cO_X\otimes_\Z \wedge^pM\to \tilde{\Omega}^p_X(\log\Sigma)
$$ 
$$
1\otimes_\Z \alpha\colon \cO_X\otimes_\Z N\to \tilde{\cT}_{X/k}(-\log\Sigma)
$$ 
\end{thm}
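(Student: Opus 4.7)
The approach is to reduce the assertion to an affine chart and then unroll the criteria stated immediately after Lemma~\ref{4.1}. Since both sides of each map are coherent $\cO_X$-modules, and every torus embedding is covered by affine torus embeddings $X_\sigma = \Spec k[\sigma^\vee \cap M]$ attached to strongly convex rational polyhedral cones $\sigma \subset N_\R$, I would work with a fixed such $X_\sigma$ and check the claimed isomorphisms on global sections. Let $e_1, \ldots, e_r \in N$ be the primitive generators of the rays of $\sigma$; then the invariant prime divisors $E_i = V(e_i)$ exhaust the prime divisors of $X_\sigma$ disjoint from the open orbit $T$, so $\Sigma|_{X_\sigma} = \sum_i E_i$.

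For the differential part, any section $\omega$ of $\tilde{\Omega}^p_X(\log \Sigma)$ over $X_\sigma$ restricts to a regular form on $T$ (since $T \cap \Sigma = \emptyset$), so by the torus eigenspace decomposition it is a \emph{finite} sum $\omega = \sum_{m \in M} \chi^m \omega(m)$ with $\omega(m) \in \wedge^p V$. The key step is to observe that $\omega$ extends to a log $p$-form along $E_i$ if and only if criterion (b) from the preamble applies at each ray, i.e.\ $\langle m, e_i \rangle \ge 0$ for every $i$ and every $m \in \Supp \omega$. This cuts out precisely the semigroup $\sigma^\vee \cap M$, so
$$
\Gamma(X_\sigma, \tilde{\Omega}^p_X(\log \Sigma)) = \bigoplus_{m \in \sigma^\vee \cap M} \chi^m \cdot \wedge^p V = k[\sigma^\vee \cap M] \otimes_k \wedge^p V,
$$
which is exactly $\Gamma(X_\sigma, \cO_X) \otimes_\Z \wedge^p M$ under $1 \otimes \wedge^p \alpha$.

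For the derivation part, the same argument runs verbatim: a derivation $\theta \in \Gamma(T, \cT_{T/k})$ has a unique finite decomposition $\theta = \sum_m \chi^m \theta(m)$ with $\theta(m) \in W$, and criterion (d) shows that $\theta$ extends to a logarithmic derivation along $\Sigma|_{X_\sigma}$ iff $\Supp \theta \subset \sigma^\vee \cap M$, producing $k[\sigma^\vee \cap M] \otimes_k W$, which matches $\Gamma(X_\sigma, \cO_X \otimes_\Z N)$ under $1 \otimes \alpha$. There is no serious obstacle here; the theorem is essentially a global reassembly of Lemma~\ref{4.1} and the pointwise criteria (a)--(d). The only points to keep straight are that the intersection $\bigcap_i \{m : \langle m, e_i \rangle \ge 0\}$ is by definition $\sigma^\vee$, and that the finite support of $\omega$ and $\theta$ in $M$ comes for free from their restriction to the torus $T$.
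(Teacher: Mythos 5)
Your proposal is correct and follows essentially the paper's own route: the paper derives the theorem from Lemma~\ref{4.1} via the divisorial criteria a)--d) stated just before it, and your argument simply makes explicit the reduction to the affine charts $X_\sigma$, the restriction to $T$ (where smoothness gives the finite eigenspace decomposition), and the identification $\bigcap_i\{m:\langle m,e_i\rangle\ge 0\}=\sigma^\vee$. Nothing essential is missing.
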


In particular, $\tilde{\Omega}^1_{X/k}(\log\Sigma)$ is a trivial $\cO_X$-module of rank
equal to the dimension of $X$, and 
$\wedge^p \tilde{\Omega}^1_{X/k}(\log\Sigma)\isoto \tilde{\Omega}^p_{X/k}(\log\Sigma)$.
And $\tilde{\cT}_{X/k}(-\log\Sigma)$ is a trivial $\cO_X$-module of rank
equal to the dimension of $X$.


\subsection{Toroidal embeddings~\cite{KKMS}}


A {\em toroidal embedding} is an open
subset $U\subseteq X$ in a normal variety $X/k$, such that for every $P\in X$,
there exists an affine toric variety $Z=T_N\emb(\sigma)$, a point $Q\in Z$, and 
an isomorphism of complete local $k$-algebras
$$
\hat{\cO}_{X,P}\simeq \hat{\cO}_{Z,Q}
$$
such that $X\setminus U$ corresponds to $Z\setminus T_N$. It follows that $U$
is nonsingular, and $\Sigma=X\setminus U$ has pure codimension one. If each
irreducible component of $\Sigma$ is normal, the toroidal embedding is called {\em strict}. 
A {\em toroidal morphism} $f\colon (U'\subseteq X')\to (U\subseteq X)$ of toroidal embeddings
is a morphism $f\colon X'\to X$, such that for every $P'\in X'$, we can choose local formal 
isomorphisms as above such that $\hat{\cO}_{X,f(P')}\to \hat{\cO}_{X',P'}$ corresponds to
the morphism $\hat{\cO}_{Z,g(Q')}\to \hat{\cO}_{Z',Q'}$ induced by a toric morphism $g\colon Z\to Z'$.
It follows that $f(U')\subseteq U$.

Given a local formal isomorphism as above, there exists by~\cite[Corollary 2.6]{Ar69} a hut
\[ 
\xymatrix{
   &    U'\subset X' \ni P' \ar[dl]_u  \ar[dr]^v &   \\
   U\subset X \ni P &  & T_N\subset Z\ni Q
} \]
with $u,v$ \'etale, $u(P')=P$, $v(P')=Q$, and $u^{-1}(U)=U'=v^{-1}(T_N)$.
By Theorem~\ref{logt} and Lemma~\ref{eBC}, we obtain 

\begin{thm} Let $U\subseteq X/k$ be a toroidal embedding. Then 
$\tilde{\Omega}^1_{X/k}(\log\Sigma)$ is a locally trivial $\cO_X$-module of rank equal to the dimension 
of $X$, and $\wedge^p \tilde{\Omega}^1_{X/k}(\log\Sigma)\isoto \tilde{\Omega}^p_{X/k}(\log\Sigma)$.
\end{thm}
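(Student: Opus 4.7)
The plan is to reduce the statement from $X$ to an affine toric variety via the formal/étale hut guaranteed by the definition of a toroidal embedding, and then invoke Theorem~\ref{logt}. Fix a point $P \in X$. By~\cite[Corollary 2.6]{Ar69}, as displayed right before the theorem, there is an étale hut
\[
\xymatrix{
 & U' \subset X' \ar[dl]_{u} \ar[dr]^{v} & \\
 U \subset X & & T_N \subset Z
}
\]
with $u$ étale surjective onto a Zariski neighborhood of $P$, $v$ étale, and $\Sigma_{X'} = u^{-1}(\Sigma_X) = v^{-1}(\Sigma_Z)$.

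The first step is to apply Lemma~\ref{eBC} to both $u$ and $v$, giving natural isomorphisms
\[
u^* \tilde{\Omega}^p_{X/k}(\log \Sigma_X) \isoto \tilde{\Omega}^p_{X'/k}(\log \Sigma_{X'}) \overset{\sim}{\longleftarrow} v^* \tilde{\Omega}^p_{Z/k}(\log \Sigma_Z)
\]
for every $p \ge 0$, and compatibly with the exterior products. By Theorem~\ref{logt}, $\tilde{\Omega}^1_{Z/k}(\log \Sigma_Z) \cong \cO_Z \otimes_\Z M$ is a free $\cO_Z$-module of rank $\dim Z = \dim X$, and the induced map $\wedge^p \tilde{\Omega}^1_{Z/k}(\log \Sigma_Z) \to \tilde{\Omega}^p_{Z/k}(\log \Sigma_Z)$ is an isomorphism. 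Pulling back along $v$ and transporting through the hut, we conclude that on a Zariski neighborhood of any chosen preimage $P' \in u^{-1}(P)$, the sheaf $u^* \tilde{\Omega}^1_{X/k}(\log \Sigma_X)$ is free of rank $\dim X$, and the wedge-product map $\wedge^p u^* \tilde{\Omega}^1_{X/k}(\log \Sigma_X) \to u^* \tilde{\Omega}^p_{X/k}(\log \Sigma_X)$ is an isomorphism there.

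The second step is descent from $X'$ to $X$. Since $u$ is étale at $P'$ with $u(P') = P$, the induced map of local rings $\cO_{X,P} \to \cO_{X',P'}$ is flat and satisfies $\fm_P \cO_{X',P'} = \fm_{P'}$, so it is faithfully flat. A coherent sheaf on $X$ is locally free at $P$ if and only if its stalk at $P$ is a free $\cO_{X,P}$-module, and freeness of a finitely generated module descends under faithfully flat local extensions of local rings. Applying this to $\tilde{\Omega}^1_{X/k}(\log \Sigma_X)$ yields local freeness of rank $\dim X$ at $P$, and applying it to the cokernel (and kernel) of the natural map $\wedge^p \tilde{\Omega}^1_{X/k}(\log \Sigma_X) \to \tilde{\Omega}^p_{X/k}(\log \Sigma_X)$ yields that this map is an isomorphism at $P$. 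Since $P$ was arbitrary, this proves the theorem.

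The only subtle point, which I expect to be the main obstacle, is the faithfully flat descent of local freeness and of the isomorphism property; everything else is a direct pullback of Theorem~\ref{logt} through two applications of Lemma~\ref{eBC}. Since the descent is a standard consequence of the flatness of étale local homomorphisms together with the fact that for finitely generated modules freeness is preserved and reflected by faithfully flat base change, the argument closes cleanly.
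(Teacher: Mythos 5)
Your proof is correct and follows essentially the same route as the paper, which deduces the theorem directly from the Artin hut together with Lemma~\ref{eBC} and Theorem~\ref{logt}. The only difference is that you spell out the faithfully flat descent of freeness and of the isomorphism property along the \'etale map $u$, a step the paper leaves implicit.
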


\begin{prop}
Let $U\subseteq X$ and $V\subseteq Y$ be toroidal embeddings.
Let $f\colon X\to Y$ be a morphism such that $f(U)\subseteq V$. The pullback
homomorphism $\Omega_{V/k}^\bullet\to f_*\Omega^\bullet_{U/k}$ extends 
(uniquely) to a homomorphism
$$
\tilde{\Omega}_{Y/k}^\bullet(\log \Sigma_Y) \to f_*\tilde{\Omega}^\bullet_{X/k}(\log \Sigma_X).
$$
\end{prop}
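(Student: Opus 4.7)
The plan is to use the natural candidate---pullback of rational differentials along $f$---and verify that it sends logarithmic forms on $(Y,\Sigma_Y)$ to logarithmic forms on $(X,\Sigma_X)$. For $\omega\in\Gamma(W,\tilde{\Omega}^p_{Y/k}(\log\Sigma_Y))$, the restriction $\omega|_{W\cap V}$ is regular, and its pullback $f^*\omega$ is a rational $p$-form on $f^{-1}(W)$ that is regular on $f^{-1}(W)\cap U$; what remains is to show $f^*\omega\in\tilde{\Omega}^p_{X/k}(\log\Sigma_X)$. Uniqueness is automatic: by the preceding theorem $\tilde{\Omega}^p_{X/k}(\log\Sigma_X)$ is locally free, hence torsion-free, and $U\subseteq X$ is dense.

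By the very definition of the log sheaf, membership is tested at each prime divisor $E\subset X$. If $E\not\subseteq\Sigma_X$, then $\eta_E\in U$, so $f(\eta_E)\in V$ and $\omega$ is regular near $f(\eta_E)$; hence $f^*\omega$ is regular at $\eta_E$. The substantive case is $E\subseteq\Sigma_X$. Here I pass to an \'etale hut $Y\leftarrow Y'\to Z$ near $f(\eta_E)$ with $Z=T_N\emb(\sigma)$ an affine toric variety and $V'=v^{-1}(T_N)$, and replace $X$ by the fibre product $\tilde X=X\times_YY'$, which is \'etale over $X$. Lemma~\ref{feBC} shifts the log-pole check to $\tilde X$, on which a composite morphism $g\colon\tilde X\to Z$ with $g(\tilde U)\subseteq T_N$ is available. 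By Theorem~\ref{logt} combined with Lemma~\ref{eBC}, $\tilde{\Omega}^p_{Y/k}(\log\Sigma_Y)$ is \'etale locally generated as an $\cO$-module by pullbacks of $\alpha_{m_1}\wedge\cdots\wedge\alpha_{m_p}$ for $m_i\in M$. By $\cO$-linearity of pullback I reduce to showing that each $g^*(\alpha_{m_1}\wedge\cdots\wedge\alpha_{m_p})=d\log\phi_1\wedge\cdots\wedge d\log\phi_p$ has at most a log pole along $\tilde\Sigma$, where $\phi_i=g^*\chi^{m_i}$ is a nonzero rational function on $\tilde X$, regular and invertible on $\tilde U$.

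The heart is now one-dimensional. At the generic point of any prime $\tilde E\subseteq\tilde\Sigma$, write $\phi_i=u_it^{a_i}$ with $u_i\in\cO^*_{\tilde X,\eta_{\tilde E}}$, $t$ a local parameter and $a_i\in\Z$; then $d\log\phi_i=a_i\,dt/t+du_i/u_i$ has at most a log pole, and wedging preserves this property. Compatibility with the differential $d$ is automatic, since the extension already agrees with the classical pullback on the dense open $U$ and both complexes are torsion-free. The principal technical obstacle is the \'etale-local reduction---meshing the unrelated \'etale models of $X$ and $Y$ through a fibre product to produce the toric-target morphism $g\colon\tilde X\to Z$---after which the $d\log$ identity closes the argument.
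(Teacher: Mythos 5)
Your proposal is correct and takes essentially the same route as the paper's proof: localize on the target via an Artin \'etale hut to a toric model, base change $X$ along the resulting \'etale cover, transfer the log-pole test with Lemmas~\ref{eBC} and~\ref{feBC}, and verify the toric generators by the $d\log$ computation for rational functions that are units on the open part. The only cosmetic differences are that you check divisor by divisor on $X$ and spell out the DVR identity $d\log\phi_i=a_i\,\frac{dt}{t}+\frac{du_i}{u_i}$ together with a direct wedge expansion, where the paper instead fixes a basis of $M$, works with the free generators $\frac{dz_i}{z_i}$, and invokes $\wedge^p\tilde{\Omega}^1(\log)\isoto\tilde{\Omega}^p(\log)$.
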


\begin{proof} We prove the claim in two steps.

{\em Step 1:}  Suppose $T\subset Z$ is a torus embedding, $v\colon Y\to Z$ is a finite
\'etale morphism, and $V=v^{-1}(T)$. Let $M$ be the lattice of characters of the torus $T$. 
Let $m_1,\ldots,m_n$ be a basis of $M$, denote $t_i=\chi^{m_i}\ (1\le i\le n)$. Then 
$\tilde{\Omega}^1_{Z/k}(\log\Sigma_Z)$ is the free $\cO_Z$-module with basis 
$
\frac{dt_i}{t_i} \ (1\le i\le n),
$
and $\wedge^p \Omega^1_{Z/k}(\log\Sigma_Z)\isoto \Omega^p_{Y/k}(\log\Sigma_Y)$. 
By Lemma~\ref{eBC}, 
$v^*\tilde{\Omega}_{Z/k}^\bullet(\log \Sigma_Z)\isoto \tilde{\Omega}_{Y/k}^\bullet(\log \Sigma_Y)$.
Denote $z_i=v^*t_i\ (1\le i\le n)$. Then $z_i\in \Gamma(V,\cO_V^*)$, 
$\tilde{\Omega}^1_{Y/k}(\log\Sigma_Y)$ is the free $\cO_Y$-module with basis 
$$
\omega_i=\frac{dz_i}{z_i}\in \Gamma(Y,\tilde{\Omega}^1_{Y/k}(\log\Sigma_Y)) \ (1\le i\le n),
$$
and $\wedge^p \tilde{\Omega}^1_{Y/k}(\log\Sigma_Y)\isoto \tilde{\Omega}^p_{Y/k}(\log\Sigma_Y)$. 
Therefore, in order to prove the claim, it suffices to show that 
$f^*\omega_i \in \Gamma(X,\tilde{\Omega}^1_{X/k}(\log\Sigma_X))$.
But $g_i=f^*z_i\in \Gamma(U,\cO_U^*)$, and 
$$
f^*\omega_i=\frac{dg_i}{g_i}\in \Gamma(X,\tilde{\Omega}^1_{X/k}(\log\Sigma_X)).
$$
The claim holds in this case.
 
{\em Step 2:} The claim is local on $Y$, so we may shrink $Y$ to an affine open 
neighborhood of a fixed point. By~\cite{Ar69}, there exists a hut
\[ 
\xymatrix{
   &    V'\subset Y'  \ar[dl]_u  \ar[dr]^v &   \\
   V\subset Y  &  & T\subset Z
} \]
where $T\subseteq Z$ is a torus embedding, $u,v$ are \'etale, and $u^{-1}(V)=V'=v^{-1}(T)$.
Denote $X'=X\times_Y Y'$, and consider the base change diagram
\[ 
\xymatrix{
 X \ar[d]_f  & X'  \ar[d]^{f'}  \ar[l]_{u'} \\
 Y              &  Y'     \ar[l]^u
} \]
Denote $U'={u'}^{-1}(U)$. The restriction of the cartesian diagram to open subsets is also 
cartesian
\[ 
\xymatrix{
 U \ar[d]_f  & U'  \ar[d]^{f'}  \ar[l]_{u'} \\
 V              &  V'     \ar[l]^u
} \]
Since $u'$ is \'etale, $U'\subseteq X'$ is also a toroidal embedding. 
Let $\omega\in \Gamma(Y,\Omega^p_{Y/k}(\log \Sigma_Y))$. Then 
$\omega|_V\in  \Gamma(V,\Omega^p_{V/k})$, so $\eta=f^*(\omega|_V)\in \Gamma(U,\Omega^p_{U/k})$.
We have ${u'}^*\eta={f'}^*u^*\omega$. By Lemma~\ref{eBC}, $u^*\omega\in 
\Gamma(Y',\tilde{\Omega}^p_{Y'/k}(\log \Sigma_{Y'}))$. By Step 1, ${f'}^*u^*\omega\in
\Gamma(X',\tilde{\Omega}^p_{X'/k}(\log \Sigma_{X'}))$. Therefore ${u'}^*\eta$ has at most 
logarithmic poles along the primes of $\Sigma_{X'}$. By Lemma~\ref{feBC}, 
$\eta\in \Gamma(X,\tilde{\Omega}^p_{X/k}(\log \Sigma_X))$.
\end{proof}


\subsection{Log smooth embeddings}

A {\em log smooth embedding} is a toroidal embedding $U\subseteq X$ 
such that $X/k$ is smooth. If we denote $\Sigma=X\setminus U$, this is equivalent
to $(X,\Sigma)$ being a {\em log smooth pair}, that is $X/k$ is smooth and the restriction 
of $\Sigma$ to each connected component of $X$ is either empty, or a normal crossing
divisor. A log smooth embedding is called {\em strict} if it is so as a toroidal embedding.
This is equivalent to the property that each irreducible component of $\Sigma$ is
smooth, that is $\Sigma$ is a simple normal crossing divisor.
We obtain an equivalence between (strict) log smooth embeddings and (strict) log smooth pairs.

We assume $\Char k=0$. We need the special case $E=0$ of~\cite[Theorem 3.4]{BM11A}:

\begin{thm}[Bierstone-Milman]\label{sr} 
Let $X$ be a smooth irreducible variety, let $\Sigma$ be a reduced divisor on $X$.
Let $V$ be an open subset of $X$ such that $\Sigma|_V$ has at most simple normal crossing
singularities. Then there exists a proper morphism $\sigma\colon X'\to X$ such that
\begin{itemize}
\item[a)] $X'$ is smooth and $\sigma^{-1}\Sigma$ has at most simple normal crossing singularities.
\item[b)] $\sigma\colon \sigma^{-1}(V)\to V$ is an isomorphism.
\end{itemize}
\end{thm}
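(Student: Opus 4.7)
The plan is to produce $\sigma$ as a finite composition of blowups along smooth centers, each contained in the non-SNC locus of $\Sigma$ (hence automatically disjoint from $V$), following the functorial algorithmic approach to embedded resolution of singularities in characteristic zero due to Hironaka and refined by Bierstone-Milman, Villamayor, and W\l odarczyk.

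The technical heart is a local invariant $\mathrm{inv}_p = \mathrm{inv}_p(X,\Sigma)$ taking values in a lexicographically ordered, well-founded set, with the following properties: (i) the function $p\mapsto \mathrm{inv}_p$ is upper semicontinuous and its maximum locus is smooth; (ii) $\mathrm{inv}_p = 0$ if and only if $\Sigma$ has simple normal crossings at $p$; (iii) after blowing up the maximum locus of $\mathrm{inv}$ and replacing $\Sigma$ by its total transform, $\max \mathrm{inv}$ strictly decreases. The building block is the Hilbert-Samuel function of the ideal sheaf of $\Sigma$, augmented by a ``year'' datum that records the birth time of each component (original versus exceptional from each previous blowup), so that the already-SNC ``old exceptional'' part can be factored off before the refined invariant is computed.

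Granted such an invariant, I would set $X_0=X$, $\Sigma_0=\Sigma$, and inductively blow up the smooth center $C_i\subset X_i$ given by the maximum locus of $\mathrm{inv}$ on $(X_i,\Sigma_i)$, taking $\Sigma_{i+1}$ to be the total transform of $\Sigma_i$. Property (iii) together with well-foundedness forces the process to terminate: for some finite $N$ one has $\mathrm{inv}\equiv 0$ on $X_N$, which by (ii) means $\Sigma_N$ has simple normal crossings and $X_N$ is smooth, yielding (a). By (ii) applied at points of $V$, each center $C_i$ is disjoint from the preimage of $V$, so the composite $\sigma\colon X_N\to X$ restricts to an isomorphism over $V$, yielding (b).

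The main obstacle is the design of $\mathrm{inv}$ itself: naive choices such as the multiplicity of $\Sigma$ fail, since a transverse crossing of $r$ smooth components has multiplicity $r$ and would be wastefully blown up, while the invariant must nevertheless decrease strictly even after a blowup creates a new exceptional divisor meeting the strict transform. The resolution, which is the technical core of [BM11A], is to split $\Sigma$ at each stage into a ``new'' part whose complexity the invariant measures and an ``old'' part of exceptional divisors known to be in SNC position, together with a careful order of maximal contact, derivative, and coefficient ideals ensuring that the invariant drops on every point of the exceptional divisor of each successive blowup. Once $\mathrm{inv}$ is in place and is functorial under smooth morphisms, the hypothesis that $\Sigma|_V$ is already SNC automatically forces the centers to avoid $V$, and the theorem follows.
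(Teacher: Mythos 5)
The paper does not prove Theorem~\ref{sr} at all: it is quoted, with attribution, as the special case $E=0$ of \cite[Theorem 3.4]{BM11A}, so the only possible comparison is with that reference. Your proposal is an outline of the standard functorial resolution strategy, but as a proof it has a genuine gap: everything rests on the existence of an invariant $\mathrm{inv}$ with your properties (i)--(iii), and that existence \emph{is} the theorem. Property (ii) in particular is much subtler than you allow. Simple normal crossings, unlike normal crossings, is not an \'etale-local or formal-local condition --- it depends on the global irreducible components of $\Sigma$ (an irreducible curve with a node is nc but not snc at the node, yet formally indistinguishable from two smooth branches meeting transversally) --- so no invariant assembled functorially from the Hilbert--Samuel function, year data, maximal contact and coefficient ideals, all of which are compatible with \'etale (or smooth) pullback, can have zero locus exactly the snc locus. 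Bierstone--Milman must instead bring the components themselves into the invariant (counting components through a point, controlling their strict transforms against the accumulating exceptional divisors), and the substantial work is to show that the resulting maximum locus is smooth, stays inside the non-snc locus, and that the invariant strictly drops when one passes to the total transform after blowing up. None of this is carried out in your sketch beyond the statement that it is ``the technical core of \cite{BM11A}.''

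The surrounding logic you do supply is fine and is indeed how the cited theorem is used: termination by well-foundedness of the value set, and the observation that centers contained in the non-snc locus are disjoint from the preimage of $V$, so the composite is an isomorphism over $V$. But since the crucial invariant is taken as a black box whose stated properties are essentially equivalent to the statement being proved, your proposal amounts to reducing Theorem~\ref{sr} to \cite{BM11A} --- which is exactly what the paper already does by citation --- rather than giving an independent proof.
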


\begin{lem}\label{hr}
Let $U\subseteq X$ be an open dense embedding, with $U$ smooth. Then 
there exists a proper morphism $\sigma\colon X'\to X$ such that 
$\sigma\colon \sigma^{-1}(U)\to U$ is an isomorphism and $\sigma^{-1}(U)\subseteq X'$
is a strict log smooth open embedding.
\end{lem}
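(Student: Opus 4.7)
The plan is to turn $X\setminus U$ into a simple normal crossing divisor on some smooth modification of $X$, while leaving $U$ untouched. I would do this in three stages: first, make $X\setminus U$ into a Cartier divisor via a blow-up; then make the ambient smooth using Hironaka's resolution of singularities; and finally improve the resulting reduced divisor to a simple normal crossing one using Theorem~\ref{sr}. Each stage will be an isomorphism over the preimage of $U$, so the composition is as well.

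For the execution, let $Z=(X\setminus U)_{\mathrm{red}}$ and let $\tau\colon\tilde{X}\to X$ be the blow-up along $Z$. Then $\tau$ is proper, an isomorphism over $U$, and $\tau^{-1}(Z)$ is a Cartier divisor on $\tilde{X}$. Next, I would apply Hironaka's resolution of singularities to obtain a proper birational $\rho\colon X_{1}\to \tilde{X}$ with $X_{1}$ smooth and $\rho$ an isomorphism over the smooth locus of $\tilde{X}$; since $\tau$ is an isomorphism over the smooth variety $U$, the open subset $\tau^{-1}(U)\subseteq\tilde{X}$ lies in that smooth locus, so $\rho$ is an isomorphism over $\tau^{-1}(U)$. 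Setting $U_{1}=(\tau\rho)^{-1}(U)$, the complement $\Sigma_{1}=X_{1}\setminus U_{1}$ is the support of the pullback Cartier divisor $\rho^{*}(\tau^{-1}(Z))$ on the smooth variety $X_{1}$, hence a reduced Weil divisor. Finally, I would apply Theorem~\ref{sr} to the pair $(X_{1},\Sigma_{1})$ with $V=U_{1}$; the hypothesis on $V$ is vacuous because $\Sigma_{1}|_{U_{1}}=\emptyset$. This yields a proper $\sigma_{0}\colon X'\to X_{1}$ with $X'$ smooth, $\sigma_{0}$ an isomorphism over $U_{1}$, and $\sigma_{0}^{-1}(\Sigma_{1})$ simple normal crossing. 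The composition $\sigma=\tau\circ\rho\circ\sigma_{0}\colon X'\to X$ is then proper, an isomorphism over $U$, and $X'\setminus\sigma^{-1}(U)=\sigma_{0}^{-1}(\Sigma_{1})$ is an SNC divisor on the smooth $X'$, so $\sigma^{-1}(U)\subseteq X'$ is a strict log smooth embedding.

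The main obstacle---really the whole point of the first stage---is that Theorem~\ref{sr} takes as input a \emph{reduced divisor}, while $X\setminus U$ may have irreducible components of codimension $\geq 2$ (e.g.\ $X=\mathbb{A}^{2}$ with $U=X\setminus\{0\}$). A plain resolution of singularities of $X$ would be an isomorphism over $U$ but could leave $X\setminus U$ of high codimension, not a divisor and thus not amenable to Bierstone--Milman. The preliminary blow-up along $X\setminus U$ is the standard remedy; once it is done, the Hironaka and Bierstone--Milman steps combine routinely to produce $\sigma$.
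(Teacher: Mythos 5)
Your proof is correct, but it takes a different route from the paper's. The paper disposes of the lemma in one stroke by citing the strong (embedded/log) form of Hironaka's resolution: there is a proper $\sigma\colon X'\to X$ with $X'$ smooth, $\sigma$ an isomorphism over $U$, and the complement of $\sigma^{-1}(U)$ a simple normal crossing divisor; the possibility that $X\setminus U$ has components of codimension $\ge 2$ is silently absorbed into that citation. You instead assemble the same conclusion from weaker ingredients: first the blow-up of $(X\setminus U)_{\mathrm{red}}$, which is proper, an isomorphism over $U$, and turns the complement into the support of a Cartier divisor; then plain (non-embedded) resolution of singularities, an isomorphism over the smooth locus and hence over the preimage of $U$; and finally Theorem~\ref{sr}, already quoted in the paper, applied with $V=U_1$ (where its hypothesis is vacuous) to make the resulting reduced divisor simple normal crossing. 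What your decomposition buys is a proof relying only on resolution in its weakest form plus the Bierstone--Milman statement the paper already uses elsewhere, and it makes explicit the codimension issue that the paper's one-line appeal leaves implicit; the cost is length and a few routine verifications you pass over quickly (that the blow-up of a reduced $X$ is again a variety, that a resolution which is an isomorphism over the smooth locus exists for possibly reducible $X$, and that Theorem~\ref{sr}, stated for irreducible $X$, is applied component by component when $X_1$ is reducible). Since each of your three stages is proper and an isomorphism over the appropriate preimage of $U$, the composite $\sigma$ indeed satisfies the lemma.
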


\begin{proof} Since $U$ is smooth, the singular locus of $X$ is contained in $X\setminus U$. 
By Hironaka's strong resolution of singularities, there exists a
proper morphism $\sigma\colon X'\to X$ such that $X'$ is smooth, $\sigma^{-1}(U)\to U$
is an isomorphism, and the complement of $\sigma^{-1}(U)$ in $X$ is a SNC divisor.
The open embedding $\sigma^{-1}(U)\subseteq X'$ is therefore strict log smooth, and 
satisfies the desired properties.
\end{proof}

\begin{lem}\label{2.3} Let $U\subseteq X$ and $X\subseteq Y$ be open dense embeddings.
\begin{itemize}
\item[1)] If $U\subseteq Y$ is a strict log smooth embedding, so is $U\subseteq X$.
\item[2)] Suppose $U\subseteq X$ is a strict log smooth embedding. Then there exists 
a proper morphism $\sigma\colon Y'\to Y$ such that $\sigma\colon \sigma^{-1}(X)\to X$ is an 
isomorphism and the open embedding $\sigma^{-1}(U)\subseteq Y'$ is strict log smooth.
\end{itemize}
\end{lem}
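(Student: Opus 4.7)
Part (1) should be immediate: an open subscheme of a smooth variety is smooth, and the intersection of a simple normal crossing divisor with an open subset is again simple normal crossing. Thus from $Y$ smooth and $Y\setminus U$ SNC I read off that $X$ is smooth and $X\setminus U=X\cap(Y\setminus U)$ is SNC, so $U\subseteq X$ is strict log smooth.

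For part (2) my plan is a two-step resolution: first I would resolve $Y$ away from $X$ to produce a smooth birational model on which the complement of $X$ has become a divisor, and then apply the Bierstone--Milman theorem to make the total boundary simple normal crossing while leaving the portion over $X$ unchanged. Concretely, since $X$ is smooth (it is part of a strict log smooth pair), Lemma~\ref{hr} applied to the open dense embedding $X\subseteq Y$ produces a proper morphism $\tau\colon Y_1\to Y$, an isomorphism over $X$, with $Y_1$ smooth and $E:=Y_1\setminus X$ a SNC divisor. Next, let $\overline{X\setminus U}$ denote the Zariski closure in $Y_1$ (a reduced Weil divisor, because $X$ is open in $Y_1$ so closure preserves codimension one), and set $\Sigma_1=E\cup\overline{X\setminus U}$; since $E$ is disjoint from $X$, the restriction $\Sigma_1|_X$ is just $X\setminus U$, which is SNC by hypothesis. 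Apply Theorem~\ref{sr} to $(Y_1,\Sigma_1)$ with open set $V=X$ (componentwise on $Y_1$ if necessary, since Theorem~\ref{sr} is stated for irreducible targets), obtaining $\sigma_2\colon Y'\to Y_1$ proper, an isomorphism over $X$, with $Y'$ smooth and $\sigma_2^{-1}\Sigma_1$ SNC. Finally, put $\sigma=\tau\circ\sigma_2\colon Y'\to Y$; this is proper and an isomorphism over $X$, and since $Y_1\setminus U=\Sigma_1$ set-theoretically I conclude $Y'\setminus\sigma^{-1}(U)=\sigma_2^{-1}\Sigma_1$ is SNC, so the open embedding $\sigma^{-1}(U)\subseteq Y'$ is strict log smooth.

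I do not anticipate a serious technical obstacle here, since both Lemma~\ref{hr} and Theorem~\ref{sr} are already available. The only point requiring a moment of care is the set-theoretic identity $Y_1\setminus U=E\cup\overline{X\setminus U}$, which follows from the decomposition $Y_1=X\sqcup E$ together with the fact that $X\setminus U$ is closed in the open subset $X$ of $Y_1$. The reason Lemma~\ref{hr} applied directly to $U\subseteq Y$ does not suffice --- so that the Bierstone--Milman strengthening is genuinely needed --- is that such a direct application would only preserve $U$, whereas the conclusion of part (2) insists that the resolution be an isomorphism over the possibly larger open subset $X$.
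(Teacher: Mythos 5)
Your proposal is correct and follows essentially the same route as the paper: part (1) by restriction of a smooth ambient variety and an SNC divisor to an open subset, and part (2) by first applying Lemma~\ref{hr} to $X\subseteq Y$ and then Theorem~\ref{sr} to the boundary divisor $Y_1\setminus U$ with $V=X$, composing the two proper modifications. Your extra remarks (the set-theoretic identity $Y_1\setminus U=E\cup\overline{X\setminus U}$, and working componentwise to meet the irreducibility hypothesis of Theorem~\ref{sr}) are points the paper leaves implicit, but they do not change the argument.
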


\begin{proof} 1) Since $Y$ is smooth, so is $X$. The divisor $Y\setminus U$ is SNC on $Y$,
hence so is its restriction $(Y\setminus U)|_X=X\setminus U$. Therefore $U\subseteq X$ is 
strict log smooth.

2) Since $U\subseteq X$ is strict log smooth, $X$ is smooth. By Lemma~\ref{hr} for $X\subseteq Y$, 
we may replace $Y$ by a modification outside $X$, so that $X\subseteq Y$ is also strict log smooth.
Let $\Sigma=Y\setminus U=(Y\setminus X)\cup (X\setminus U)$. Then $\Sigma$ is a divisor on $Y$.
Its restriction to $X$ is $\Sigma|_X=X\setminus U$, a SNC divisor by assumption. By Theorem~\ref{sr}, 
we may replace $Y$ by a modification outside $X$ so that $\Sigma$ becomes a SNC divisor on $Y$. 
Therefore $U\subseteq Y$ is strict log smooth.
\end{proof}

\begin{cor}\label{SNCemb} Let $U\subseteq X$ be a strict log smooth embedding.
Then there exists an open embedding $j\colon X\subseteq \bar{X}$ such that the induced open
embedding $U\subseteq \bar{X}$ is strict log smooth, and $\bar{X}/k$ is proper. 
\end{cor}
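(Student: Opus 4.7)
The plan is to reduce to Lemma~\ref{2.3}(2) by first producing any proper compactification, then cleaning it up inside an open set where $X$ already lives.

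First I would invoke Nagata's compactification theorem to obtain an open dense embedding $X\subseteq Y$ with $Y/k$ proper (since $X/k$ is of finite type and separated, this is available). At this point $Y$ has no control at all along $Y\setminus X$: it may be singular, and the boundary $Y\setminus U$ need not be a normal crossing divisor. But the crucial feature is that the strict log smooth embedding $U\subseteq X$ sits inside $Y$ as the nested open embedding $U\subseteq X\subseteq Y$, which is exactly the setup of Lemma~\ref{2.3}(2).

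Next I would apply Lemma~\ref{2.3}(2) to $U\subseteq X\subseteq Y$ to obtain a proper morphism $\sigma\colon Y'\to Y$ such that $\sigma\colon \sigma^{-1}(X)\to X$ is an isomorphism and the open embedding $\sigma^{-1}(U)\subseteq Y'$ is strict log smooth. Setting $\bar{X}=Y'$, and letting $j\colon X\hookrightarrow \bar X$ be the composition of $(\sigma|_{\sigma^{-1}(X)})^{-1}$ with the inclusion $\sigma^{-1}(X)\subseteq Y'$, one has an open embedding of $X$ into $\bar X$ under which $U$ is identified with $\sigma^{-1}(U)$; hence $U\subseteq \bar X$ is strict log smooth. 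Finally, $\bar X=Y'$ is proper over $k$ because $\sigma$ is proper and $Y/k$ is proper.

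There is no real obstacle: the only nontrivial inputs are Nagata's compactification theorem (to get started) and Lemma~\ref{2.3}(2) (to improve the boundary without touching $X$). If one wished to avoid Nagata in positive situations, one could instead start from a projective compactification when $X$ is quasiprojective, but the above works in the stated generality.
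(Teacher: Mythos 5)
Your proposal is correct and follows exactly the paper's own argument: Nagata compactification to get a proper $Y\supseteq X$, then Lemma~\ref{2.3}(2) applied to $U\subseteq X\subseteq Y$ to modify $Y$ outside $X$ into a strict log smooth compactification, which remains proper since the modification is a proper morphism. Nothing further is needed.
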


\begin{proof} By Nagata, there exists an open dense embedding $X\subseteq \bar{X}$, 
with $\bar{X}/k$ proper. By Lemma~\ref{2.3}.2), we may replace $\bar{X}$ by a modification
outside $X$, so that the induced embedding $U\subseteq \bar{X}$ is strict log smooth.
\end{proof}

\begin{cor}\label{moremb}
Let $U\subseteq X$ and $V\subseteq Y$ be strict log smooth open embeddings.
Let $f\colon X\to Y$ be a morphism such that $f(U)\subseteq V$. Then there exists 
a commutative diagram
\[ 
\xymatrix{
 U \ar[d]  \ar[r] & X  \ar[d]^f \ar[r] & \bar{X}  \ar[d]^{\bar{f}}  \\
  V    \ar[r]                   &  Y   \ar[r]         & \bar{Y}  
} \]
such that
\begin{itemize}
\item[a)]  the vertical arrows are open embeddings. 
\item[b)] $U\subseteq \bar{X}$ and $V\subseteq \bar{Y}$ are strict log smooth embeddings.
\item[c)] $\bar{X}$ and $\bar{Y}$ are proper over $k$.
\end{itemize}
Moreover, $f$ is proper if and only if $X={\bar{f}}^{-1}(Y)$.
\end{cor}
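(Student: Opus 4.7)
The plan is to first compactify $Y$ using Corollary~\ref{SNCemb}, then build $\bar{X}$ as a compactification of $X$ that simultaneously carries an extension of $f$, and finally verify the properness equivalence by a standard density argument.

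First I would apply Corollary~\ref{SNCemb} to the strict log smooth embedding $V\subseteq Y$ to produce an open embedding $Y\subseteq \bar{Y}$ with $\bar{Y}/k$ proper and $V\subseteq \bar{Y}$ strict log smooth. Set $g\colon X\xrightarrow{f}Y\hookrightarrow \bar{Y}$. By Nagata, choose any open dense embedding $X\subseteq X_0$ with $X_0/k$ proper. The graph $\Gamma_g\subseteq X\times_k\bar{Y}$ is closed since $\bar{Y}/k$ is separated, so its closure $\bar{X}_1\subseteq X_0\times_k\bar{Y}$ is proper over $k$ and contains $\Gamma_g\cong X$ as an open dense subset (because $\Gamma_g=\bar{X}_1\cap(X\times_k\bar{Y})$ is cut out of $\bar{X}_1$ by an open subset of $X_0\times_k\bar{Y}$). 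The second projection gives a morphism $\bar{f}_1\colon \bar{X}_1\to \bar{Y}$ extending $g$. Now I would apply Lemma~\ref{2.3}(2) to the pair of open dense embeddings $U\subseteq X\subseteq \bar{X}_1$ to obtain a proper modification $\sigma\colon \bar{X}\to \bar{X}_1$ which is an isomorphism over $X$ and such that $U\subseteq \bar{X}$ is strict log smooth. Setting $\bar{f}=\bar{f}_1\circ \sigma$ produces the required commutative diagram: $\bar{X}$ is proper, and on $X$ the morphism $\bar{f}$ agrees with $g$, hence factors through $Y\hookrightarrow\bar{Y}$ with restriction equal to $f$.

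For the equivalence, the direction $X=\bar{f}^{-1}(Y)\Rightarrow f$ proper is immediate: $f$ is the base change of the proper morphism $\bar{f}\colon \bar{X}\to\bar{Y}$ along the open embedding $Y\hookrightarrow\bar{Y}$. Conversely, suppose $f$ is proper and set $W=\bar{f}^{-1}(Y)$, an open subset of $\bar{X}$ containing $X$. The restriction $\bar{f}|_W\colon W\to Y$ is proper (base change of $\bar{f}$), so $X\hookrightarrow W$ is a morphism between proper $Y$-schemes and hence proper. A proper open immersion is closed, and $X$ is dense in $\bar{X}$ (hence in $W$), so the closed dense subset $X\subseteq W$ must equal $W$. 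Therefore $X=\bar{f}^{-1}(Y)$.

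The main obstacle is confirming that the resolution step provided by Lemma~\ref{2.3}(2) is compatible with the extension of $f$; this is automatic because $\sigma$ is an isomorphism over $X$ and $\bar{Y}$ is not modified, so composing with $\bar{f}_1$ supplies the extended morphism without any loss. The only other point requiring care is the density argument in the properness equivalence, where one must check that the inclusion $X\hookrightarrow W$ is a morphism over $Y$ so that the cancellation property of proper morphisms applies.
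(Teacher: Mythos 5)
Your construction follows the paper's own proof essentially verbatim: compactify $V\subseteq Y$ via Corollary~\ref{SNCemb}, use Nagata plus the closure of the graph of $X\to\bar{Y}$ (the paper phrases this as the graph of the induced rational map $X'\dashrightarrow\bar{Y}$), and then apply Lemma~\ref{2.3}(2) to make $U\subseteq\bar{X}$ strict log smooth. The only difference is that you prove the properness equivalence directly by the cancellation-plus-density argument, which is exactly the content of the paper's Lemma~\ref{pc}, cited there instead of being reproved; your argument is correct.
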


\begin{proof} By Corollary~\ref{SNCemb}, there exists an open embedding $Y\subseteq \bar{Y}$
such that $\bar{Y}$ is proper and $V\subseteq \bar{Y}$ is strict log smooth. 
By Nagata, there exists an open embedding $X\subseteq X'$ with $X'$ proper. 
Then $f$ induces a rational map $\bar{f}\colon X'\dashrightarrow \bar{Y}$.
Let $\Gamma$ be the graph of $\bar{f}$, with induced morphisms to $X'$ and $\bar{Y}$,
which partially resolve $\bar{f}$. Since $\bar{f}$ is defined over $X$, $\Gamma\to X'$ is an isomorphism 
over $X$. We obtain a chain of open embeddings $U\subseteq X\subseteq \Gamma$. By 
Lemma~\ref{2.3}.2), we may replace $\Gamma$ by a modification outside $X$, denoted $\bar{X}$, such 
that $U\subseteq\bar{X}$ is strict log smooth. This ends the construction of the diagram.
The last statement follows from Lemma~\ref{pc}.
\end{proof}

\begin{lem}\label{pc}
Let $f\colon X\to S$ be a proper morphism of schemes.
Let $U\subseteq X$ and $V\subseteq S$ be open dense subsets such that $f(U)\subseteq V$. 
The following properties are equivalent:
\begin{itemize}
\item[1)] The induced morphism $g\colon U\to V$ is proper.
\item[2)] $U=f^{-1}(V)$.
\end{itemize}
\end{lem}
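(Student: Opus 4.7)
The plan is to prove the equivalence by setting $W=f^{-1}(V)$ and analyzing the factorization $U\hookrightarrow W\to V$ of $g$, where the second arrow is the base change of $f$ along $V\hookrightarrow S$ and is therefore proper.

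For the direction (2)$\Rightarrow$(1), if $U=W$ then $g$ itself is the base change of $f$ along the open immersion $V\hookrightarrow S$, and properness is preserved under base change, so $g$ is proper.

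For the direction (1)$\Rightarrow$(2), I would use the standard cancellation property: if a composition $h\circ j$ is proper and $h$ is separated, then $j$ is proper. Here $g=(W\to V)\circ j$ where $j\colon U\hookrightarrow W$ is an open immersion and $W\to V$ is proper (hence separated). Assuming $g$ is proper, it follows that $j$ is proper. But $j$ is also an open immersion, and an open immersion is a monomorphism; a proper monomorphism is a closed immersion. So $j$ identifies $U$ with a clopen subscheme of $W$. The main (small) point to record is that $U$ is dense in $W$: indeed the closure of $U$ inside $W$ equals $\overline{U}^X\cap W=X\cap W=W$, using that $U$ is dense in $X$. A clopen dense subscheme must equal the whole scheme, so $U=W=f^{-1}(V)$.

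I do not expect any real obstacle here; the only things one must be careful about are (i) citing the right form of the cancellation lemma for properness (composition proper plus second arrow separated forces the first to be proper), and (ii) using that $U$ is assumed dense in $X$, which is essential because a proper open immersion only forces $U$ to be a union of connected components of $f^{-1}(V)$, and density upgrades this to equality.
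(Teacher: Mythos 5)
Your proof is correct and follows essentially the same route as the paper: factor $g$ through $f^{-1}(V)\to V$ (proper by base change), apply the cancellation property to see the open immersion $U\hookrightarrow f^{-1}(V)$ is proper, and use density of $U$ to conclude $U=f^{-1}(V)$, with the converse by base change along $V\subseteq S$. Your added detail that a proper open immersion is clopen and that density upgrades this to equality is exactly the implicit content of the paper's argument.
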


\begin{proof} $1)\Longrightarrow 2)$: Consider the commutative diagram
\[ 
\xymatrix{
U \ar[dr]_g\ar[rr]^\iota    & & f^{-1}(V) \ar[dl]^{f|_{f^{-1}(V)}}  \\ 
 & V &
} \]
Since $f|_{f^{-1}(V)}$ is proper and $g$ is separated, it follows
that the open embedding $\iota$ is proper. Since $U$ is also
dense in $f^{-1}(V)$, we obtain $U=f^{-1}(V)$.

$2)\Longrightarrow 1)$: The morphism $g$ is obtained 
from $f$ by base change with open embedding $V\subset S$. Therefore it
is proper.
\end{proof}


\subsection{Hypercohomology with supports}


Let $X$ be an algebraic variety. Let $U\subseteq X$ be an open subset, let $Z=X\setminus U$.
Let $\alpha\colon \cA\to \cB$ be a homomorphism of bounded below complexes of $\cO_X$-modules. 

\begin{lem}\label{eqi}
Suppose the natural maps in hypercohomology induced by $\alpha$ and $\alpha|_U$ 
$$
\bH^*(X,\cA)\to \bH^*(X,\cB), \ \bH^*(U,\cA|_U)\to \bH^*(U,\cB|_U)
$$
are isomorphisms. Then the natural map induced by $\alpha$ in hypercohomology with support in $Z$
is also an isomorphism:
$$
\bH_Z^*(X,\cA)\isoto \bH_Z^*(X,\cB).
$$
\end{lem}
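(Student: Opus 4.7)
The plan is to deduce the statement from the long exact sequence relating hypercohomology with supports to the hypercohomology of $X$ and of the open subset $U$, combined with the five lemma.

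First I would fix an injective resolution $\cA\to \cI_\cA^\bullet$ and $\cB\to \cI_\cB^\bullet$, and extend $\alpha$ to a morphism of resolutions $\tilde\alpha\colon \cI_\cA^\bullet\to \cI_\cB^\bullet$. Because each term $\cI_\cA^i$ and $\cI_\cB^i$ is injective, hence flasque, the restriction morphisms $\Gamma(X,\cI_\cA^i)\to \Gamma(U,\cI_\cA^i|_U)$ and $\Gamma(X,\cI_\cB^i)\to \Gamma(U,\cI_\cB^i|_U)$ are surjective, with kernels $\Gamma_Z(X,\cI_\cA^i)$ and $\Gamma_Z(X,\cI_\cB^i)$ respectively. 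This produces two short exact sequences of complexes of abelian groups,
$$
0\to \Gamma_Z(X,\cI_\cA^\bullet)\to \Gamma(X,\cI_\cA^\bullet)\to \Gamma(U,\cI_\cA^\bullet|_U)\to 0,
$$
$$
0\to \Gamma_Z(X,\cI_\cB^\bullet)\to \Gamma(X,\cI_\cB^\bullet)\to \Gamma(U,\cI_\cB^\bullet|_U)\to 0,
$$
compatible via $\tilde\alpha$.

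Next I would pass to cohomology. This yields two long exact sequences
$$
\cdots\to \bH^i_Z(X,\cA)\to \bH^i(X,\cA)\to \bH^i(U,\cA|_U)\to \bH^{i+1}_Z(X,\cA)\to \cdots
$$
and the analogous one for $\cB$, assembled into a commutative ladder by the maps induced from $\tilde\alpha$. By the hypotheses, two out of every three vertical arrows (the ones labelled by $\bH^i(X,-)$ and $\bH^i(U,-|_U)$) are isomorphisms. Applying the five lemma to the resulting ladder then forces the remaining vertical arrows $\bH^i_Z(X,\cA)\to \bH^i_Z(X,\cB)$ to be isomorphisms as well, which is the desired conclusion.

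The only subtle point is the construction of the short exact sequences of complexes, which hinges on injective $\cO_X$-modules being flasque so that restriction to $U$ is surjective at each degree; everything else is a formal consequence of the five lemma. Since $\alpha$ is merely a map of bounded below complexes there is no convergence issue for the hypercohomology spectral sequences, and the argument applies in every cohomological degree simultaneously.
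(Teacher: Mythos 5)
Your argument is correct and follows essentially the same route as the paper: both deduce the statement from the long exact sequence relating $\bH^*_Z$, $\bH^*(X,-)$ and $\bH^*(U,-)$, assembled into a commutative ladder and finished with the five lemma; you merely spell out the construction of that long exact sequence via injective (hence flasque) resolutions, which the paper takes as known.
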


\begin{proof} The long exact sequences for hypercohomology with supports induce a commutative diagram
with exact rows
\[ 
\xymatrix{
\bH^{i-1}(X,\cA) \ar[d] \ar[r] &  \bH^{i-1}(U,\cA) \ar[d]  \ar[r] & \bH^i_Z(X,\cA) \ar[d] \ar[r] & \bH^i(X,\cA) \ar[r] \ar[d] &  H^i(U,\cA) \ar[d] \\
\bH^{i-1}(X,\cB) \ar[r] & \bH^{i-1}(U,\cB) \ar[r] & \bH^i_Z(X,\cB)  \ar[r] & \bH^i(X,\cB)  \ar[r] & \bH^i(U,\cB)
} \]
All but the middle vertical arrows are isomorphisms. Then the middle vertical arrow is also an isomorphism,
by the 5-lemma.
\end{proof}

\begin{lem}\label{d0} 
Suppose $\alpha$ is a quasi-isomorphism over $U$. If $\dim Z=0$ and the natural maps
$$
\bH_Z^*(X,\cA)\to \bH_Z^*(X,\cB)
$$
are isomorphisms, then $\alpha$ is a quasi-isomorphism.
\end{lem}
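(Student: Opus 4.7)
The plan is to pass to the cone $\cC=\operatorname{Cone}(\alpha)$ and show that $\cC$ is acyclic, which is equivalent to $\alpha$ being a quasi-isomorphism. First, since $\alpha|_U$ is a quasi-isomorphism, $\cC|_U$ is acyclic, so every cohomology sheaf $\cH^q(\cC)$ is supported on $Z$. Second, the distinguished triangle $\cA\to \cB\to \cC$ gives a long exact sequence in hypercohomology with supports; combined with the hypothesis that $\bH^*_Z(X,\cA)\to \bH^*_Z(X,\cB)$ is an isomorphism, this yields $\bH^n_Z(X,\cC)=0$ for all $n$.

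Next I would exploit the assumption $\dim Z=0$. Because each $\cH^q(\cC)$ is supported on the finite set $Z$ of closed points, it is of the form $i_*\cG$ for a sheaf $\cG$ on the discrete scheme $Z$ (where $i\colon Z\hookrightarrow X$). Such a sheaf is flasque, so its higher cohomology vanishes; moreover, every section is supported on $Z$, giving
\[
H^0_Z(X,\cH^q(\cC))=\Gamma(X,\cH^q(\cC)),\qquad H^p_Z(X,\cH^q(\cC))=0 \text{ for } p>0.
\]
The hypercohomology-with-supports spectral sequence
\[
E_2^{p,q}=H^p_Z(X,\cH^q(\cC))\Longrightarrow \bH^{p+q}_Z(X,\cC)
\]
therefore degenerates at $E_2$ and collapses to $\bH^n_Z(X,\cC)=\Gamma(X,\cH^n(\cC))$.

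Combining this with the vanishing $\bH^n_Z(X,\cC)=0$ from Step 2, we conclude that $\Gamma(X,\cH^n(\cC))=0$ for every $n$. Since $\cH^n(\cC)=i_*\cG_n$ with $Z$ a finite set of closed points, $\Gamma(X,\cH^n(\cC))$ is the direct sum of the stalks of $\cG_n$, so the vanishing of global sections forces each stalk, and hence $\cH^n(\cC)$, to be zero. Therefore $\cC$ is acyclic and $\alpha$ is a quasi-isomorphism.

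The only real subtlety is the degeneration step, which depends crucially on $\dim Z=0$: that is what guarantees both the vanishing of $H^p_Z$ of sheaves supported on $Z$ for $p>0$, and the fact that global sections detect the whole sheaf. In higher dimension neither statement survives, and the argument would break down.
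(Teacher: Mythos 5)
Your proof is correct and follows essentially the same route as the paper: the paper's proof is a one-line citation of the local-to-global spectral sequence for hypercohomology with supports (Peters--Steenbrink, p.~196), which is exactly the spectral sequence $E_2^{p,q}=H^p_Z(X,\cH^q(\cC))\Rightarrow \bH^{p+q}_Z(X,\cC)$ you invoke, with the standard cone reduction and the $\dim Z=0$ degeneration argument written out in full. No gaps.
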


\begin{proof} This follows from the local to global spectral sequence, cf.~\cite[page 196]{PS}.
\end{proof}

\begin{lem}
Suppose $Z$ is a finite disjoint union of closed subsets $Z_i$. Then the natural homomorphisms
$
\oplus_i \bH^*_{Z_i}(X,\cA)\to \bH^*_Z(X,\cA)
$
are isomorphisms.
\end{lem}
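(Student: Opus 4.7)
The plan is to reduce the statement to a sheaf-level decomposition. For any $\cO_X$-module $\cF$, let $\underline{\Gamma}_Z(\cF)$ denote the subsheaf of sections with support in $Z$, and similarly $\underline{\Gamma}_{Z_i}(\cF)$ for each $i$. The inclusions $Z_i\subseteq Z$ induce inclusions $\underline{\Gamma}_{Z_i}(\cF)\hookrightarrow\underline{\Gamma}_Z(\cF)$, which I claim assemble into an isomorphism of sheaves
$$
\bigoplus_i \underline{\Gamma}_{Z_i}(\cF)\isoto \underline{\Gamma}_Z(\cF).
$$

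I would verify this on stalks. For $x\notin Z$, both sides vanish. For $x\in Z$, disjointness forces $x\in Z_{i_0}$ for a unique index $i_0$; since the index set is finite and each $Z_j$ with $j\ne i_0$ is closed in $X$ and does not contain $x$, the point $x$ admits an open neighborhood $W$ disjoint from $\bigcup_{j\ne i_0} Z_j$, so that $W\cap Z=W\cap Z_{i_0}$. On such $W$ the sheaves $\underline{\Gamma}_Z(\cF)$ and $\underline{\Gamma}_{Z_{i_0}}(\cF)$ coincide, while $\underline{\Gamma}_{Z_j}(\cF)|_W=0$ for every $j\ne i_0$, which yields the stalk isomorphism at $x$.

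Next, I would fix a bounded below injective resolution $\cA\to \cJ^\bullet$ of $\cA$ by $\cO_X$-modules. Then $\bH^*_Z(X,\cA)=H^*(\Gamma_Z(X,\cJ^\bullet))$ and similarly for each $Z_i$. Since global sections commute with finite direct sums of sheaves, applying the claim termwise produces an isomorphism of complexes
$$
\bigoplus_i \Gamma_{Z_i}(X,\cJ^\bullet)\isoto \Gamma_Z(X,\cJ^\bullet).
$$
Taking cohomology, which also commutes with finite direct sums, yields $\bigoplus_i \bH^*_{Z_i}(X,\cA)\isoto \bH^*_Z(X,\cA)$. By naturality of the inclusions $\underline{\Gamma}_{Z_i}\hookrightarrow \underline{\Gamma}_Z$, this isomorphism agrees with the natural map in the statement.

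The only substantive step is the stalk decomposition, and its essential input is purely topological: because the union $Z=\bigsqcup_i Z_i$ is finite and each $Z_j$ is closed in $X$, each $Z_i$ is separated from $\bigcup_{j\ne i}Z_j$ by an open neighborhood at every of its points. Everything else is formal.
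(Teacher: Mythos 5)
Your argument is correct, but it is not the paper's route. The paper disposes of this lemma in one line: induction on the number of pieces $Z_i$, writing $Z=Z_1\sqcup(Z_2\sqcup\cdots)$ and invoking the Mayer--Vietoris sequence for hypercohomology with supports (whose connecting terms vanish because the two closed pieces are disjoint). You instead prove the sharper sheaf-level statement that for any $\cO_X$-module $\cF$ the natural map $\bigoplus_i\underline{\Gamma}_{Z_i}(\cF)\to\underline{\Gamma}_Z(\cF)$ is an isomorphism, checked on stalks using that finitely many pairwise disjoint closed sets can be separated by open neighborhoods, and then apply this termwise to a bounded below complex of injectives $\cA\to\mathcal{J}^\bullet$ computing $\bH^*_Z(X,\cA)=H^*\bigl(\Gamma_Z(X,\mathcal{J}^\bullet)\bigr)$. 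This is a genuinely different and slightly stronger argument: it produces an isomorphism already at the level of the complexes $\Gamma_{Z_i}(X,\mathcal{J}^\bullet)$, needs no long exact sequences or induction, and makes the topological input (local separation of the $Z_i$) explicit; the paper's version is shorter but relies on quoting Mayer--Vietoris for supports. The only points worth making explicit in your write-up are routine: that the same injective resolution computes all the groups $\bH^*_{Z_i}(X,\cA)$ simultaneously, that injective $\cO_X$-modules are flasque hence acyclic for each $\Gamma_{Z_i}$, and that $\underline{\Gamma}_Z(\cF)|_W=\underline{\Gamma}_{Z\cap W}(\cF|_W)$, which is what your stalk computation uses.
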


\begin{proof}
By induction on the cardinality of the $Z_i$'s, and the Mayer-Vietoris sequence.
\end{proof}


\subsection{Invariance of logarithmic sheaves}


Suppose $\Char k=0$. To avoid heavy notation, we denote 
$\tilde{\Omega}^p_{X/k}(\log\Sigma)$ by $\Omega^p_X(\log\Sigma)$.

\begin{thm}\label{sc}
Let $(X',\Sigma'),(X,\Sigma)$ be strict log smooth pairs, let $f\colon X'\to X$ be a 
proper morphism such that $f\colon X'\setminus \Sigma'\to X\setminus \Sigma$
is an isomorphism. Then for every $p$, the natural homomorphism 
$$
\Omega^p_X(\log\Sigma)\to Rf_*\Omega^p_{X'}(\log\Sigma')
$$
is a quasi-isomorphism.
\end{thm}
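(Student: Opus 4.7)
The plan is to follow the strategy of Esnault--Viehweg for their analogous statement in the smooth projective SNC setting (\cite[Lemma 1.5]{RCII}), adapted to arbitrary strict log smooth pairs via the compactification results Corollary~\ref{SNCemb} and Corollary~\ref{moremb}. The argument proceeds in three stages: reduce to the proper case by compactification, establish a global hypercohomology isomorphism via Deligne's $E_1$-degeneration, then pass to a local quasi-isomorphism using hypercohomology with supports and induction.

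Since the statement is local on $X$, I would first apply Corollary~\ref{moremb} to the morphism $f$ (viewed as carrying the common open embedding $U\subseteq X'$ to $U\subseteq X$, identified via $f|_{X'\setminus\Sigma'}$) to extend $f$ to a proper morphism $\bar f\colon \bar X'\to \bar X$ between proper strict log smooth pairs, with $X\subseteq \bar X$ open. Since $f$ is proper, the final assertion of Corollary~\ref{moremb} gives $\bar f^{-1}(X)=X'$, hence by base change the restriction of $R\bar f_*\Omega^p_{\bar X'}(\log\bar\Sigma')$ to $X$ equals $Rf_*\Omega^p_{X'}(\log\Sigma')$; moreover $\bar f$ restricts to an isomorphism over $U$. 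It therefore suffices to prove the theorem for $\bar f$, and we may assume $X, X'$ are proper over $k$.

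In the proper case, I would invoke Deligne's theorem on the degeneration at $E_1$ of the Hodge-to-de Rham spectral sequence for the log de Rham complex of a proper log smooth pair. It follows that the abutment $\bH^i(X,\Omega^\bullet_X(\log\Sigma))$, together with its Hodge filtration, depends only on $U=X\setminus\Sigma$ and not on the chosen proper log smooth compactification. Applied to both $(X,\Sigma)$ and $(X',\Sigma')$, this yields an isomorphism
$$
H^q(X,\Omega^p_X(\log\Sigma))\ \isoto\ H^q(X',\Omega^p_{X'}(\log\Sigma'))
$$
for every $p,q$, and hence the global hypercohomology map induced by $\alpha\colon \Omega^p_X(\log\Sigma)\to Rf_*\Omega^p_{X'}(\log\Sigma')$ is an isomorphism.

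To conclude, let $C$ be the cone of $\alpha$; since $f$ is an isomorphism over $U$, $C|_U\simeq 0$. Combining the global isomorphism from the previous paragraph with Lemma~\ref{eqi} gives $\bH^*_\Sigma(X,C)=0$. To deduce $C\simeq 0$ as a complex of sheaves, I would induct on $\dim X$: the base case is trivial, and in the inductive step one exploits the SNC stratification of $\Sigma$ by applying the residue-type short exact sequences relating $\Omega^p_X(\log\Sigma)$ to $\Omega^{p-1}$ on components $E\subseteq\Sigma$ (which are strict log smooth of dimension $\dim X-1$), thereby reducing higher-codimension strata to lower-dimensional instances of the theorem, and finally applying Lemma~\ref{d0} once $\Supp C$ has been cut down to dimension zero. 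The main obstacle is precisely this last stage: promoting the hypercohomology-with-supports vanishing to a sheaf-level vanishing requires a careful orchestration of the stratification, the residue sequences, and the inductive hypothesis, and is the most delicate part of the Esnault--Viehweg argument carried over to the toroidal setting.
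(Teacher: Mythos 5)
Your global ingredients (compactification via Corollary~\ref{moremb}, Deligne's $E_1$-degeneration plus the Atiyah--Hodge comparison to get the hypercohomology isomorphism, Lemma~\ref{eqi} to obtain vanishing of hypercohomology with supports for the cone $C$) match the paper's Step~2. But the decisive step is exactly the one you leave open, and the mechanism you sketch for it does not work. Knowing $\bH^*_\Sigma(X,C)=0$ says nothing sheaf-theoretically unless the support of the cohomology of $C$ has dimension zero (Lemma~\ref{d0}); on a proper variety a nonzero coherent sheaf can have all cohomology groups zero, so one cannot conclude $C\simeq 0$ from global (or supported) hypercohomology vanishing alone. Your proposed fix -- inducting via the residue exact sequences along components $E\subseteq\Sigma$ -- cannot invoke the theorem in lower dimension, because the hypothesis of the theorem is not inherited: $f$ restricted over a boundary component $E$ is in general not an isomorphism off a divisor of $E$ (it need not be birational over $E$ at all, e.g.\ when $f$ is a blow-up with center inside $\Sigma$), and the residue sequences on $X$ and $X'$ do not match up under $f$ in a way that sets up an induction. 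So as written the argument has a genuine gap at its crucial point.

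The paper closes this gap differently: it inducts on $\dim X$ using a \emph{general hyperplane section} $H$ rather than the strata of $\Sigma$. For general $H$, the pair $(H,\Sigma|_H)$ and $(H'=f^*H,\Sigma'|_{H'})$ are again strict log smooth and $f|_{H'}\colon H'\to H$ is an isomorphism off $\Sigma|_H$, so the inductive hypothesis applies; combined with base change isomorphisms $R^if_*\Omega^p_{X'}(\log\Sigma')|_H\isoto R^ig_*\Omega^p_{H'}(\log\Sigma'|_{H'})$ for general $H$, this shows the locus $Z$ where $\alpha$ fails to be a quasi-isomorphism misses $H$, hence $\dim Z\le 0$. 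Only then does the paper compactify (keeping $Z$ inside the open part, and splitting $\bar Z=Z\sqcup(\bar X\setminus X)$ by Mayer--Vietoris and excision) and apply the Deligne/supports argument together with Lemma~\ref{d0} to the zero-dimensional $Z$. If you replace your residue-stratification step by this hyperplane-section induction, the rest of your outline goes through; note also that the compactification is better performed after localizing near $Z$, as in the paper, since the supported-hypercohomology conclusion is only useful once $Z$ is known to be finite.
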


\begin{proof} Denote $\alpha\colon \Omega^p_X(\log\Sigma)\to Rf_*\Omega^p_{X'}(\log\Sigma')$.
We prove by induction on $\dim X$ that $\alpha$ is a quasi-isomorphism. If $\dim X=1$, then 
$f$ is an isomorphism and the claim is clear. Suppose $\dim X\ge 2$. Let $Z$ be the complement
of the largest open subset of $X$ where $\alpha$ is a quasi-isomorphism. It is the union of the 
supports of the following $\cO_X$-modules: the cokernel $\cC$ of 
$\Omega^p_X(\log\Sigma)\to f_*\Omega^p_{X'}(\log\Sigma')$, and 
$R^if_*\Omega^p_{X'}(\log\Sigma')\ (i>0)$. Suppose by contradiction that $Z$ is nonempty.

{\em Step 1:} We claim that $\dim Z\le 0$. Indeed, the statement is local on $X$, so
we may suppose $X$ is affine. Let $H$ be a general hyperplane section of $X$.
Denote $H'=f^*H$. Then $(H,\Sigma|_H)$ and $(H',\Sigma'|_{H'})$ are strict log smooth,
and $g=f|_{H'} \colon H'\to H$ maps $H\setminus (\Sigma|_H)$ isomorphically onto 
$H'\setminus (\Sigma'|_{H'})$. By induction, 
$
\Omega^p_H(\log\Sigma|_H)\to Rg_*\Omega^p_{H'}(\log\Sigma'|_{H'})
$
is a quasi-isomorphism. 

Since $H$ is general, we have base change isomorphisms
$$
R^if_*\Omega^p_{X'}(\log\Sigma')|_H\isoto R^ig_*\Omega^p_{H'}(\log\Sigma'|_{H'}) \ (i\ge 0).
$$
For $i>0$, the right hand side is zero, and therefore $R^if_*\Omega^p_{X'}(\log\Sigma')|_H$ is zero.
For $i=0$, consider the commutative diagram with exact raws 
\[ 
\xymatrix{
 \Omega^p_X(\log \Sigma)|_H \ar[d]^r  \ar[r] & f_*\Omega^p_{X'}(\log\Sigma')|_H  \ar[d]^{r'} \ar[r] & \cC|_H \ar[d]^{r''} \ar[r] & 0 \\
  \Omega^p_H(\log \Sigma|_H)    \ar[r]^\simeq          &  g_*\Omega^p_{H'}(\log\Sigma'|_{H'})  \ar[r]         & 0   & 
} \]
where $r''$ is induced by $r$ and $r'$. Since $r'$ is injective and $r$ is surjective, it follows that 
$r''$ is injective. Therefore $\cC|_H=0$.

We conclude that $Z\cap H=\emptyset$. Therefore $\dim Z\le 0$.

{\em Step 2:} We claim that $\bH^*_Z(X,\alpha)$ is an isomorphism. Indeed,
denote $U'=X'\setminus \Sigma'$, $U=X\setminus \Sigma$. Then $U'\subseteq X'$ and $U\subseteq X$ are strict
log smooth embeddings, and $f\colon X'\to X$ maps $U'$ isomorphically onto $U$.
We compactify this data as in Corollary~\ref{moremb}:
\[ 
\xymatrix{
 U' \ar[d]  \ar[r] & X'  \ar[d]^f \ar[r] & \bar{X'}  \ar[d]^{\bar{f}}  \\
 U    \ar[r]          & X   \ar[r]         & \bar{X}  
} \]
Since $f$ is proper, it is obtained from $\bar{f}$ by base change with the open embedding
$X\subseteq \bar{X}$. Denote $\bar{\Sigma'}=\bar{X'}\setminus U'$, 
$\bar{\Sigma}=\bar{X}\setminus U$. Consider the natural homomomorphism
$$
\bar{\alpha}\colon \Omega^p_{\bar{X}}(\log\bar{\Sigma})\to R\bar{f}_*\Omega^p_{\bar{X'}}(\log\bar{\Sigma'}).
$$
Since the second square is cartesian, $\bar{\Sigma'}|_{X'}=\Sigma'$, $\bar{\Sigma}|_X=\Sigma$, and 
logarithmic sheaves commute with base change by open embeddings, we obtain an identification
$$
\bar{\alpha}|_X\isoto \alpha.
$$
Denote $\bar{Z}=Z\cup (\bar{X}\setminus X)$ and $\bar{U}=\bar{X}\setminus \bar{Z}=X\setminus Z$.
From the quasi-isomorphism $\bar{\alpha}|_{\bar{U}}\isoto \alpha|_{X\setminus Z}$, we deduce that $\bar{\alpha}$ 
is a quasi-isomorphism over $\bar{U}$. Therefore we obtain isomorphisms
$$
\bH^*(\bar{U},\Omega^p_{\bar{X}}(\log\bar{\Sigma})|_{\bar{U}})\to 
\bH^*(\bar{U},R\bar{f}_*\Omega^p_{\bar{X'}}(\log\bar{\Sigma'})|_{\bar{U}}).
$$

Next, we claim that the homomorphism
$
\bH^*(\bar{X},\Omega^p_{\bar{X}}(\log\bar{\Sigma}))\to 
\bH^*(\bar{X},R\bar{f}_*\Omega^p_{\bar{X'}}(\log\bar{\Sigma'}))
$
is also an isomorphism. Indeed, it identifies with the homomorphism
$$
H^*(\bar{X},\Omega^p_{\bar{X}}(\log\bar{\Sigma}))\to H^*(\bar{X'},\Omega^p_{\bar{X'}}(\log\bar{\Sigma'})).
$$
This is an isomorphism by the Atiyah-Hodge lemma and Deligne's theorem on E$_1$ degeneration for 
logarithmic de Rham complexes, since $\bar{X'},\bar{X}$ are proper and $\bar{f}\colon \bar{X'}\setminus \bar{\Sigma'}
\to \bar{X}\setminus \bar{\Sigma}$ is the isomorphism $f\colon U'\isoto U$.

By Lemma~\ref{eqi}, $\bH^*_{\bar{Z}}(\bar{X},\bar{\alpha})$ is an
isomorphism. Since $\bar{Z}$ is the disjoint union of $Z$ with $\bar{X}\setminus X$, we deduce
that $\bH^*_Z(\bar{X},\bar{\alpha})$ is an isomorphism. Since $Z$ is contained in the open subset $X$
of $\bar{X}$ and $\bar{\alpha}|_X=\alpha$, it follows by excision that $\bH^*_Z(X,\alpha)$ is an isomorphism.

{\em Step 3:} Since $\dim Z=0$ and $\bH^*_Z(X,\alpha)$ is an isomorphism, 
Lemma~\ref{d0} implies that $\alpha$ is a quasi-isomorphism. That is $Z=\emptyset$, a contradiction.
\end{proof}

\begin{cor}\label{fst}
Let $(X',\Sigma'),(X,\Sigma)$ be toroidal pairs, let $f\colon X'\to X$ be a 
proper morphism such that $f\colon X'\setminus \Sigma'\to X\setminus \Sigma$
is an isomorphism. Then for every $p$, the natural homomorphism 
$$
\Omega^p_X(\log\Sigma)\to Rf_*\Omega^p_{X'}(\log\Sigma')
$$
is a quasi-isomorphism.
\end{cor}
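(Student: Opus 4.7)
The plan is to reduce Corollary \ref{fst} to the strict log smooth case already handled in Theorem \ref{sc}, via compatible toroidal desingularizations of $X$ and $X'$. The first step is to construct a commutative square
$$
\xymatrix{
\tilde{X}' \ar[d]_h \ar[r]^{\tilde{f}} & \tilde{X} \ar[d]^g \\
X' \ar[r]_f & X
}
$$
in which $g\colon \tilde{X}\to X$ and $h\colon \tilde{X}'\to X'$ are proper toroidal modifications with $\tilde{X}, \tilde{X}'$ strict log smooth, and all four morphisms restrict to isomorphisms over the common open $U=X\setminus\Sigma$. I would assemble this by first choosing a toroidal desingularization $g$ of $X$ via a regular subdivision of the conical complex of $(X,\Sigma)$ (as in \cite{KKMS}), and then by toroidally desingularizing a modification of $X'\times_X\tilde{X}$ that dominates both $X'$ and $\tilde{X}$, producing $h$ and the lift $\tilde{f}$ simultaneously.

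The reduction hinges on the following key lemma, which I would isolate as a separate statement: for every proper toroidal modification $\pi\colon (\tilde{Y},\tilde\Sigma)\to (Y,\Sigma)$ from a strict log smooth pair to a toroidal pair, restricting to an isomorphism outside the boundaries, the natural map $\Omega^p_Y(\log\Sigma)\to R\pi_*\Omega^p_{\tilde{Y}}(\log\tilde\Sigma)$ is a quasi-isomorphism. Granting this for both $g$ and $h$, and applying Theorem \ref{sc} to $\tilde{f}$, one considers the diagram
$$
\xymatrix{
\Omega^p_X(\log\Sigma) \ar[r] \ar[d] & Rg_*\Omega^p_{\tilde{X}}(\log\tilde\Sigma) \ar[d] \\
Rf_*\Omega^p_{X'}(\log\Sigma') \ar[r] & R(fh)_*\Omega^p_{\tilde{X}'}(\log\tilde\Sigma')
}
$$
in which the top horizontal and right vertical are quasi-isomorphisms by the key lemma for $g$ and by $Rg_*$ applied to Theorem \ref{sc} respectively, while the bottom horizontal is a quasi-isomorphism by $Rf_*$ applied to the key lemma for $h$ (using $fh=g\tilde{f}$). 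The square commutes by naturality of pullback, so the left vertical, which is the natural map of the corollary, is also a quasi-isomorphism.

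The key lemma is the main obstacle. The statement is local on $Y$ in the \'etale topology, so by the hut provided by the definition of toroidal embedding (Artin's approximation theorem, recalled in the excerpt before the proposition on pullback of logarithmic forms), combined with Lemma \ref{eBC} for logarithmic sheaves and flat base change for $R\pi_*$, one may replace $(Y,\Sigma)$ by an affine toric variety $(Z,\Sigma_Z)$. Since $\pi$ is a toroidal morphism and $\tilde{Y}$ is smooth, the \'etale-local model of $\pi$ is a smooth toric resolution $\sigma\colon \tilde{Z}\to Z$ coming from a regular subdivision of the defining fan. The required quasi-isomorphism $\Omega^p_Z(\log\Sigma_Z)\isoto R\sigma_*\Omega^p_{\tilde{Z}}(\log\Sigma_{\tilde{Z}})$ is then Danilov's theorem on toric varieties, whose proof exploits the explicit trivialization $\cO_Z\otimes_\Z \wedge^p M \isoto \tilde\Omega^p_{Z/k}(\log\Sigma_Z)$ from Theorem \ref{logt} together with an invariant affine \v{C}ech cover of $\tilde{Z}$.
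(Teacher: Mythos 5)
Your global strategy (a commutative square over a common strict log smooth resolution, plus the two-out-of-three property for quasi-isomorphisms) is sound and is close in spirit to the paper's Step 4, but the proof of your key lemma does not cover the case you actually need. You apply the key lemma to $h\colon \tilde{X}'\to X'$, yet $\tilde{X}'$ is produced by desingularizing a modification of $X'\times_X\tilde{X}$ (or of the graph closure), and there is no reason for such an $h$ to be a \emph{toroidal} morphism; your proof of the key lemma uses the toroidal hypothesis in an essential way, since it is what lets you claim an \'etale-local toric model for $\pi$ and invoke Danilov. Conversely, if you insist that $h$ be toroidal you lose the lift $\tilde{f}\colon \tilde{X}'\to\tilde{X}$, because $f$ itself is not toroidal and nothing forces a toroidal resolution of $(X',\Sigma')$ to dominate $\tilde{X}$. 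What is missing is precisely the paper's ``independence of resolution'' step (Step 0 of the proof of Corollary~\ref{fst}): if the source is strict log smooth, then it suffices to verify the statement for \emph{one} particular resolution, by comparing any two through a Hironaka hut and applying Theorem~\ref{sc} twice. With that step in hand, the intermediate statement you need is: for any proper $\pi$ from a strict log smooth pair to a toroidal pair which is an isomorphism outside the boundaries (no toroidality of $\pi$ assumed), the map is a quasi-isomorphism; the paper proves this (its Steps 1--3) by working \'etale-locally on the base, pulling back a \emph{toric} resolution of the local toric model, and then using Step 0 to compare it with the pullback of the given $\pi$. This is exactly the comparison your argument skips.

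Two secondary points. First, your \'etale-local claim that a toroidal modification with smooth source ``is'' a toric resolution coming from a subdivision is not immediate from the paper's definition of toroidal morphism, which is formal-local; Artin's result quoted in the paper gives huts for the embeddings, not an \'etale identification of the morphism with a toric one, so even the toroidal case of your key lemma needs either an approximation argument for morphisms or, again, the Step 0 comparison. Second, your construction of $g$ via a regular subdivision of the conical complex of $(X,\Sigma)$ requires the toroidal embedding to be strict (without self-intersection) for the KKMS machinery to apply globally, whereas the corollary does not assume strictness; the paper avoids this by never constructing a global toroidal resolution of $X$, using Hironaka globally and toric resolutions only \'etale-locally.
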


\begin{proof} We prove the claim in several steps.

{\em Step 0:} If moreover $(X',\Sigma')$ is strict log smooth, if suffices to check the
claim for a particular $f$. Indeed, suppose $g\colon (X'',\Sigma'')\to (X,\Sigma)$
is another morphism with the same properties, with $(X'',\Sigma'')$ strict log smooth, and
we know the claim holds for $g$. There exists a Hironaka hut
\[ 
\xymatrix{
   &    (X''',\Sigma''') \ar[dl] \ar[dr] &   \\
   (X',\Sigma') \ar[dr]  &  & (X'',\Sigma'') \ar[dl] \\
   &  (X,\Sigma) &
} \]
such that $(X''',\Sigma''')$ is strict log smooth, and all arrows are isomorphisms
above $X\setminus \Sigma$. The claim holds for $X''/X$ by assumption, and for $X'''/X''$ by 
Theorem~\ref{sc}. Therefore it holds for $X'''/X$. By Theorem~\ref{sc}, it also holds for $X'''/X'$.
Therefore it holds for $X'/X$.

{\em Step 1:} Suppose $(X',\Sigma'),(X,\Sigma)$ and $f$ are toric. In this case,
$\Omega^p_X(\log\Sigma)\simeq \cO_X^{\oplus r}$ for some $r$, and 
$f^*\Omega^p_X(\log\Sigma)\to \Omega^p_{X'}(\log\Sigma')$ is an isomorphism.
By the projection formula, our homomorphism is a quasi-isomorphism if and only if 
$$
\cO_X\to Rf_*\cO_{X'}
$$
is a quasi-isomorphism. This holds, and can be proved combinatorially~\cite{Dan78}.

{\em Step 2:} Suppose $(X',\Sigma')$ is strict log smooth and $(X,\Sigma)$ is toric.
There exists a toric log resolution $(X'',\Sigma'')\to (X,\Sigma)$, which by 
construction is an isomorphism over the torus $T=X\setminus \Sigma$. Moreover,
$(X'',\Sigma'')$ is strict log smooth. By Step 1, the claim holds for $X''/X$.
By Step 0, it also holds for $X'/X$.

{\em Step 3:} $(X',\Sigma')$ is strict log smooth and $(X,\Sigma)$ is toroidal.
The claim is local on $X$. After possibly shrinking $X$ near a fixed point, there exists 
a hut
\[ 
\xymatrix{
   &    (Y,\Sigma_Y) \ar[dl] \ar[dr] &   \\
   (X,\Sigma)  &  & (Z,\Sigma_Z) 
} \]
such that $Y/X$ and $Y/Z$ are \'etale, $\Sigma_Y$ is the preimage of both $\Sigma$ and 
$\Sigma_Z$, and $(Z,\Sigma_Z)$ is the restriction to an open subset of a toric pair.
Our sheaves commute with \'etale base change, so our claim on $X$ is equivalent to
the claim for the pullback of $(X',\Sigma')\to (X,\Sigma)$ to $Y$.
On the other hand, we can construct a toric resolution, which when restricted to the open 
subset $Z$ will satisfy the claim. After base change to $Y$, the claim still holds.
We obtain two morphisms $(Y',\Sigma')\rightarrow (Y,\Sigma_Y)\leftarrow (Y'',\Sigma'')$
as in the claim, with $(Y',\Sigma'),(Y'',\Sigma'')$ strict log smooth. The claim holds for $Y''/Y$.
By Step 0, it also holds for $Y'/Y$. Therefore it holds for $X'/X$.

{\em Step 4:} By Hironaka, there exists a diagram
\[ 
\xymatrix{
   (X',\Sigma') \ar[d]  & (X'',\Sigma'') \ar[dl] \ar[l]   \\
   (X,\Sigma)  &  
} \]
such that $(X'',\Sigma'')$ is strict log smooth and $X''/X'$ is an isomorphism over $X'\setminus \Sigma'$.
By Step 3, the claim holds for $X''/X$ and $X''/X'$. Therefore it holds for $X'/X$.
\end{proof}


\section{Roots of sections}


Let $X$ be a scheme and $\cL$ an invertible $\cO_X$-module. For $n\in \Z$,
denote the tensor product $\cL^{\otimes n}$ by $\cL^n$. 

\begin{prop} Consider a global section $s\in \Gamma(X,\cL^n)$, for some positive integer $n$. 
Then there exist a morphism of schemes $\pi\colon Y\to X$ and a global section $t\in \Gamma(Y,\pi^*\cL)$, 
such that $t^n=\pi^*s$, and the following universal property holds: 
if $g\colon Y'\to X$ is a morphism of schemes, and $s'\in \Gamma(Y',g^*\cL)$ 
is a global section such that ${s'}^n=g^*s$, then there exists a unique morphism 
$u\colon Y'\to Y$ such that $g=\pi\circ u$ and $s'=u^*t$.
\end{prop}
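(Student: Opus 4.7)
The plan is to realize $Y$ as the relative spectrum $\Spec_X \cA$ for an explicit sheaf of $\cO_X$-algebras
$$
\cA = \Sym_{\cO_X}(\cL^{-1}) / \cJ,
$$
where $\cJ$ is the ideal generated by the image of $\cL^{-n} \to \Sym_{\cO_X}(\cL^{-1})$, $x \mapsto \iota_n(x) - s(x)$. Here $\iota_n$ is the natural inclusion into degree $n$, and $s(x)\in \cO_X$ refers to $s$ viewed as a map $\cL^{-n}\to \cO_X$ (via the identification $s\in \Gamma(X,\cL^n)=\Hom_{\cO_X}(\cL^{-n},\cO_X)$). The relations in $\cJ$ identify each copy of $\cL^{-n}$ sitting in degree $\ge n$ with $\cO_X$ in degree $0$, so the $\Z$-grading on $\Sym_{\cO_X}(\cL^{-1})$ collapses and yields a canonical $\cO_X$-module decomposition $\cA \cong \bigoplus_{i=0}^{n-1}\cL^{-i}$. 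Locally, where $\cL\cong \cO_X$, $\cA$ reduces to the familiar ring $A[T]/(T^n-s)$, so it is a well-defined commutative $\cO_X$-algebra.

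Set $\pi\colon Y=\Spec_X\cA \to X$. The canonical map $\cL^{-1} \hookrightarrow \Sym_{\cO_X}(\cL^{-1}) \twoheadrightarrow \cA = \pi_*\cO_Y$ corresponds, by adjunction, to a section $t\in \Gamma(Y,\pi^*\cL)$, and the defining relations in $\cJ$ translate directly into $t^n=\pi^*s$.

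For the universal property, let $g\colon Y'\to X$ and $s'\in \Gamma(Y',g^*\cL)$ with ${s'}^n=g^*s$. By the adjunction between $\Spec_X$ and direct image, a morphism $u\colon Y'\to Y$ over $X$ with $u^*t=s'$ is the same datum as an $\cO_X$-algebra homomorphism $\varphi\colon \cA\to g_*\cO_{Y'}$ whose restriction to $\cL^{-1}$ is the adjoint of $s'$. Since $\cA$ is generated as an $\cO_X$-algebra by the image of $\cL^{-1}$, at most one such $\varphi$ exists; and the map $\cL^{-1}\to g_*\cO_{Y'}$ adjoint to $s'$ extends uniquely to an algebra map out of $\Sym_{\cO_X}(\cL^{-1})$, which by ${s'}^n=g^*s$ vanishes on the generators of $\cJ$, hence descends to the required $\varphi$.

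The only nontrivial verification is that $\cJ$ truly cuts out the advertised direct-sum decomposition with the claimed multiplication; this is local on $X$ and reduces via trivialization of $\cL$ to the transparent case $A[T]/(T^n-s)$. Everything else is formal manipulation of the $\Spec_X$--direct image adjunction.
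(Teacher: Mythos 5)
Your proposal is correct, but it packages the construction differently from the paper. The paper defines the algebra directly as the twisted direct sum $\cA=\oplus_{i=0}^{n-1}\cL^{-i}$ (multiplication folded back by $s$ when degrees exceed $n-1$), checks on trivializing opens that this globalizes the affine model $A[T]/(T^n-s)$, produces $t$ by gluing the local sections $u^{-1}\cdot\pi^*u$ (with an independence-of-choice verification), and then checks the universal property locally, reducing to the affine computation of Step 1. You instead present $\cA$ as $\Sym_{\cO_X}(\cL^{-1})/\mathcal{J}$, with $\mathcal{J}$ generated by the image of $\cL^{-n}$ under $x\mapsto \iota_n(x)-s(x)$, and run everything through the adjunction $\Hom_X(Y',\Spec_X\cA)\cong\Hom_{\cO_X\mathrm{-alg}}(\cA,g_*\cO_{Y'})$: the section $t$ is the canonical map $\cL^{-1}\to\cA$ read back through the projection formula, $t^n=\pi^*s$ is exactly the relation imposed by $\mathcal{J}$, and existence and uniqueness of $u$ amount to the fact that an algebra map out of this quotient of a symmetric algebra is determined by, and built from, its restriction to the degree-one generators, the ideal being killed precisely because ${s'}^n=g^*s$. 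What your route buys: no local trivializations, gluing, or well-definedness checks are needed for $t$ or for the universal property; the only genuinely local verification is the module identification $\cA\cong\oplus_{i=0}^{n-1}\cL^{-i}$, and even that is not needed for the proposition itself, only for the eigensheaf decomposition later. What the paper's route buys: that decomposition, which is what is actually used afterwards (Lemma~\ref{bp}.a) and its sequels), is built into the definition rather than being an after-the-fact identification, and the explicit local model is immediately at hand for the subsequent computations with roots of units and normalized roots. Both arguments are complete; yours is the more intrinsic and formally cleaner one, the paper's the more hands-on one.
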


\begin{proof}
{\em Step 1:} Suppose $X=\Spec A$ and $\cL=\cO_X$. Then $s\in \Gamma(X,\cO_X)=A$.
The ring homomorphism
$$
A\to \frac{A[T]}{(T^n-s)}
$$
induces a finite morphism $\pi\colon Y\to X$. If we denote by $t\in \Gamma(Y,\cO_Y)$ the class 
of $T$, we have $t^n=\pi^*s$. Let $g\colon Y'\to X$ be a morphism of schemes, and 
$s'\in \Gamma(Y',\cO_{Y'})$ with ${s'}^n=g^*s$. There exists a unique homomorphism of $A$-algebras
$$
\frac{A[T]}{(T^n-s)}\to \Gamma(Y',\cO_{Y'})
$$
which maps $T$ to $s'$. This translates into a morphism $u\colon Y'\to Y$ with $g=\pi\circ u$ and 
$s'=u^*t$.

{\em Step 2:} Consider the $\cO_X$-algebra  
$
\cA=\oplus_{i=0}^{n-1}\cL^{-i},
$
with the following multiplication: if $u_i,u_j$ are local sections of $\cA_i$ and $\cA_j$ respectively,
their product is the local section $u_i\otimes u_j$ of $\cA_{i+j}$ if $i+j<n$, and the local section
$u_i\otimes u_j\otimes s$ of $\cA_{i+j-n}$ if $i+j\ge n$. Let 
$\pi\colon Y=\Spec_X(\cA)\to X$ be the induced finite morphism of schemes.

Let $u\in \Gamma(U,\cL)$ be a nowhere zero section on some open subset $U\subseteq X$.
Then $s|_U=fu^n$ for some $f\in \Gamma(U,\cO_U)$.
Then $\cL|_U=\cO_U u$ and $\cA|_U=\oplus_{i=0}^{n-1}\cO_U u^{-i}$, $u^{-1}\in \Gamma(U,\cA_1)$
satisfies $(u^{-1})^n=f$, and mapping $T\mapsto u^{-1}$ induces an isomorphism over $U$
$$
\pi^{-1}(U)\isoto \Spec_U \frac{\cO_U[T]}{(T^n-f)}.
$$
Therefore the construction of $\cA$ globalizes the local construction in Step 1.

Consider the section $u^{-1}\cdot \pi^*u\in \Gamma(\pi^{-1}(U),\pi^*\cL)$. It satisfies 
$(u^{-1}\cdot \pi^*u)^n=\pi^*(s|_U)$. If $u'$ is another nowhere zero global section of $\cL|_U$, 
then $u'=vu$ for some unit $v\in \Gamma(U,\cO_U^\times)$. Since $\pi^*v=v$, we obtain 
${u'}^{-1}\cdot \pi^*u'=u^{-1}\cdot \pi^*u$. So the section does not depend on the choice of $u$.
Since $X$ can be covered by affine open subsets which trivialize $\cL$, it follows
that $u^{-1}\cdot \pi^*u$ glue to a section $t$ of $\pi^*\cL$ whose $n$-th power is $\pi^*s$.
The universal property can be checked on affine open subsets of $X$ on which $\cL$
is trivial, so it follows from Step 1.
\end{proof}

The morphism $\pi\colon Y\to X$, endowed with the section $t\in \Gamma(Y,\pi^*\cL)$, 
is unique up to an isomorphism over $X$. It is called the {\em $n$-th root of $s$}. 
We denote $Y$ by $X[\sqrt[n]{s}]$, and $t$ by $\sqrt[n]{s}$. 

\begin{lem}\label{bp}
Let $s\in \Gamma(X,\cL^n)$ be a global section for some $n\ge 1$, let 
$\pi\colon X[\sqrt[n]{s}]\to X$ be the $n$-th root of $s$. The following properties hold:
\begin{itemize}
\item[a)] $\pi_*\cO_{X[\sqrt[n]{s}]}=\oplus_{i=0}^{n-1}\cL^{-i}$.
\item[b)] Suppose the group of units $\Gamma(X,\cO_X^*)$ contains a primitive $n$-th
root of $1$. Then $\Z/n\Z$ acts on $X[\sqrt[n]{s}]$, the morphism $\pi$ is the induced quotient map, 
and a) is the decomposition into eigensheaves.
\item[c)] The morphism $\pi$ is finite and flat. 
\item[d)] Let $f\colon X'\to X$ be a morphism. Then the $n$-th root of 
$f^*s\in \Gamma(X',f^*\cL^n)$ is the pullback morphism 
$X[\sqrt[n]{s}]\times_X X'\to X'$, endowed with the pullback section.
\end{itemize}
\end{lem}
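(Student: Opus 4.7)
The plan is to read off each of (a)--(d) directly from the explicit construction $\pi\colon Y=\Spec_X(\cA)\to X$ with $\cA=\oplus_{i=0}^{n-1}\cL^{-i}$ produced in the preceding proposition, together with the universal property that characterises the $n$-th root. Part (a) is tautological: by construction $\pi_*\cO_Y=\cA=\oplus_{i=0}^{n-1}\cL^{-i}$. Part (c) is also immediate: locally on $X$ the line bundle $\cL$ trivialises, so $\cA$ is locally a free $\cO_X$-module of rank $n$, hence $\pi$ is affine, finite and flat.

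For (b), I would let $\zeta\in\Gamma(X,\cO_X^*)$ be the given primitive $n$-th root of unity and define a $\Z/n\Z$-action on $\cA$ by letting the generator $\bar 1$ act on the $i$-th summand $\cL^{-i}$ as multiplication by $\zeta^i$. The one thing to check is compatibility with the algebra multiplication of $\cA$: the product sends $\cL^{-i}\otimes\cL^{-j}$ into $\cL^{-(i+j)}$ when $i+j<n$ and into $\cL^{-(i+j-n)}$ when $i+j\ge n$ (via multiplication by $s\in\Gamma(X,\cL^n)$), and in either case the scalar picked up is $\zeta^{i+j}=\zeta^{i+j-n}$, so the action is an algebra automorphism. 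This induces a $\Z/n\Z$-action on $Y$ over $X$; its ring of invariants is the eigenspace for the trivial character, namely $\cL^0=\cO_X$, so $\pi$ is the quotient map, and the decomposition of (a) is already the eigensheaf decomposition under the characters of $\Z/n\Z$.

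For (d), let $\pi'\colon Y\times_X X'\to X'$ and $u\colon Y\times_X X'\to Y$ be the two projections, so that $f\circ\pi'=\pi\circ u$. Then $u^*t\in\Gamma(Y\times_X X',u^*\pi^*\cL)=\Gamma(Y\times_X X',{\pi'}^*f^*\cL)$ and
$$
(u^*t)^n=u^*(t^n)=u^*\pi^*s={\pi'}^*f^*s.
$$
I would then verify that $(\pi',u^*t)$ satisfies the universal property for $f^*s$: given $g\colon Z\to X'$ and $s'\in\Gamma(Z,g^*f^*\cL)$ with $(s')^n=g^*f^*s$, applying the universal property of the $n$-th root of $s$ to the pair $(f\circ g,s')$ yields a unique $v\colon Z\to Y$ with $\pi\circ v=f\circ g$ and $v^*t=s'$, and then the universal property of the fibre product combines $v$ and $g$ into a unique $w\colon Z\to Y\times_X X'$ satisfying $\pi'\circ w=g$, $u\circ w=v$ and hence $w^*(u^*t)=s'$.

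I do not expect a serious obstacle anywhere: every assertion is a formal consequence of the construction. The only minor care needed is the compatibility check in (b) (making sure the scalar $\zeta^{i+j}$ matches across the two cases of multiplication) and keeping track, in (d), of the identification ${\pi'}^*f^*\cL=u^*\pi^*\cL$ under which $u^*t$ lives in the correct line bundle.
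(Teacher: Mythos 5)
Your proposal is correct and follows essentially the same route as the paper: everything is read off the explicit construction $Y=\Spec_X(\oplus_{i=0}^{n-1}\cL^{-i})$ and its local model $A[T]/(T^n-s)$, with the same $\mu_n$-action $u_i\mapsto\zeta^i u_i$ on the $i$-th summand. The only (harmless) difference is that for d) you verify the universal property of the fibre product globally, whereas the paper reduces the check to affine opens trivializing $\cL$.
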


\begin{proof} All properties follow from the local description of the $n$-th root in the case 
when $X$ is affine and $\cL=\cO_X$. Note that $\pi$ is flat since $\cL$ is locally free.
In b), let $\zeta\in \Gamma(X,\cO_X^*)$ be a primitive $n$-th root of $1$. The action is 
$(\zeta,aT^i)\mapsto \zeta^i a T^i$ for the local model in Step 1, 
and $(\zeta,u_i)\mapsto \zeta^iu_i$ for the global model in Step 2.
\end{proof}

\begin{exmp}
Consider $\bA^1_\Z=\Spec \Z[T]$, and $n\ge 1$. View $T$ as a global section of 
$\cO_{\bA^1_\Z}^n=\cO_{\bA^1_\Z}$. The $n$-th root of $T$ is the endomorphism
$\pi_n\colon \bA^1_\Z\to \bA^1_\Z$ induced by $T\mapsto T^n$, and the global section is again $T$.
Indeed, $(\pi_n,T)$ satisfies the universal property. Since roots commute with base change
by Lemma~\ref{bp}.d), we also obtain the following description:
let $f\in \Gamma(X,\cO_X)$ and $n\ge 1$.
View $f$ as a global section of $\cO_X^n=\cO_X$. The $n$-th root of $f$
coincides with $X\times_{\bA^1_\Z}\bA^1_\Z$, where $f\colon X\to \bA^1_\Z$ is the morphism
induced by $f$, and $\pi_n \colon \bA^1_\Z\to \bA^1_\Z$ is the $n$-th root of $T$. 
We obtain a cartesian diagram
$$
\xymatrix{
X   \ar[d]_f  & X[\sqrt[n]{f}]  \ar[l]_\pi \ar[d] \\
\bA^1_\Z    &  \bA^1_\Z \ar[l]_{\pi_n}
}
$$ 
\end{exmp}

\begin{rem} Let $m,n\ge 1$ and $s\in \Gamma(X,\cL^{mn})$. Then the $mn$-th root of $s$ is the 
composition of two roots: the $m$-th root of $s\in \Gamma(X,(\cL^n)^m)$, followed by the 
$n$-th root of $\sqrt[m]{s}$. Indeed, this composition satisfies the universal property.
\end{rem}

\begin{exmp}
If $s$ vanishes nowhere and $n\in \Gamma(X,\cO^*_X)$, 
then $X[\sqrt[n]{s}]\to X$ is a finite \'etale covering. 
\end{exmp}

\begin{rem}\label{unc}
Let $A$ be a ring, $a\in A$ and $u\in U(A)$. Then $A[\sqrt[n]{a}]\isoto A[\sqrt[n]{u^na}]$ over $A$.
Indeed, $T\mapsto uT$ induces an $A$-isomorphism
$
A[T]/(T^n-a)\isoto A[T]/(T^n-u^na).
$
\end{rem}

\begin{rem} Suppose $X$ is reduced and irreducible, and $n\in \Gamma(X,\cO^*_X)$. Let $s_1,s_2$ be two 
nonzero global sections of $\cL^n$ with the same zero locus, an effective Cartier divisor $D$.
Then $s_2=us_1$ for some unit $u\in \Gamma(X,\cO_X^\times)$. The two cyclic covers 
$X[\sqrt[n]{s_i}] \to X \ (i=1,2)$ become isomorphic 
after base change with the \'etale cover $\tau\colon X[\sqrt[n]{u}]\to X$.
If $X/k$ is proper, then $\Gamma(X,\cO_X)=k$, so $u\in k^\times$. Therefore $\sqrt[n]{u}\in k^\times$
so $\tau$ is an isomorphism. It follows that the two cyclic covers are already isomorphic over $X$. 
Therefore, if $X/k$ is integral and proper, we can speak of the {\em cyclic cover 
associated to $\cO_X(D)\simeq \cL^n$}.
\end{rem}

Even if $X$ is reduced and $s$ is nowhere zero, the scheme $X[\sqrt[n]{s}]$ may not be reduced. 
For example, $\bF_2[T]/(T^2-1)$ has nilpotents, as $T^2-1=(T-1)^2$.

\begin{rem} Suppose $\zeta\in \Gamma(X,\cO_X^\times)$ satisfies $\zeta^q=1$, and $q\ge 1$ is minimal
with this property. Let $s\in \Gamma(X,\cL^n)$ and $n,q\ge 1$. Then $T^{nq}-s^q=\prod_{\zeta\in \mu_q}(T^n-\zeta s)$.
Therefore $X[\sqrt[nq]{s^q}]=\cup_{\zeta\in \mu_q}X[\sqrt[n]{\zeta s}]$.
\end{rem}

\begin{exmp}[Singularities of semistable reduction]
Let $X=\bA^d_k$ and $s=\prod_{i=1}^d z_i^{m_i}\in \Gamma(X,\cO_X)$, for some 
$(m_1,\ldots,m_d)\in \N^d\setminus 0$. Let $n\ge 2$ with $\Char k\nmid n$. 
The $n$-th root of $s$ is the hypersurface 
$$
X[\sqrt[n]{s}]=Z(t^n-\prod_{i=1}^d z_i^{m_i})\subset \bA^{d+1}_k.
$$
It is smooth if and only if $d=1$ and $m_1=1$. Else, its singular locus is 
$$
\cup_{m_i\ge 2}Z(t,z_i)\cup \cup_{m_i=m_j=1,i\ne j}Z(t,z_i,z_j).
$$
The components of the former (latter) kind have codimension one (two) in $X[\sqrt[n]{s}]$.
Therefore $X[\sqrt[n]{s}]$ is normal if and only if $\max_i m_i=1$. 

Denote $g=\gcd(n,m_1,\ldots,m_d)$. Let $n=gn',m_i=gm'_i$. Denote $s'=\prod_{i=1}^d z_i^{m'_i}$. 
Then $X[\sqrt[n]{s}]$ is a reduced $k$-variety, with irreducible decomposition
$$
X[\sqrt[n]{s}] = \cup_{\zeta\in \mu_g} X[\sqrt[n']{\zeta s'}].
$$

Each irreducible component is isomorphic over $X$ to $X[\sqrt[n']{s'}]$.
The latter is the simplicial toric variety 
$T_\Lambda\emb(\sigma)$, where $\Lambda=\{\lambda\in \Z^d; \sum_{i=1}^d\lambda_i m_i\in r\Z\}$ 
and $\sigma\subset \R^d$ is the standard positive cone (cf.~\cite[page 98]{KKMS}, 
\cite[Lemma 2.2]{Ste76}, \cite[Example 9.9, Proposition 10.10]{Kol95}).

If $d=1$, the root is easier to describe. The normalization $\bar{X}[\sqrt[n']{s'}] \to X[\sqrt[n']{s'}]$ is 
$\bA^1\to \bA^2, \ x \mapsto (x^{m_1},x^{n'})$. So $X[\sqrt[n]{s}]$ consists of $g$ lines
through the origin in the affine plane, each line being isomorphism 
over $X$ to the morphism $\bA^1\to \bA^1, x\mapsto x^{n'}$. 
\end{exmp}


\subsection{Roots of torus characters}


Let $T_N$ be a torus defined over a field $k$. Let $M=N^*$ be the dual lattice.
The multiplicative group of units of the torus is 
$$
\Gamma(T_N,\cO_{T_N}^*)=\{c \chi^m;c\in k^\times, m\in M\}.
$$
Let $v\in M$. Let $n\ge 1$. Denote $M'=M+\Z \frac{v}{n}\subset M_\Q$.
The lattice dual to $M'$ is 
$$
N'=\{e\in N;\langle v,e\rangle\in n\Z\}.
$$
The set $\{i\in \Z;i\frac{v}{n}\in M\}$ is a subgroup of $\Z$, of the form
$n'\Z$ for some divisor $1\le n'\mid n$. Denote $d=n/n'$ and $v'=v/d\in M$. 
Since $\frac{v}{n}=\frac{v'}{n'}$, we obtain $\frac{i}{n'}v\notin M$ for every $0<i<n'$. 

\begin{prop}\label{ru}
\begin{itemize}
\item[a)] Suppose $n'=n$. Then the $n$-th root of the unit 
$\chi^v\in  \Gamma(T_N,\cO^*_{T_N})$ is the torus homomorphism 
$
T_{N'}\to T_N
$ 
induced by the inclusion $N'\subseteq N$, endowed with the unit
$\chi^{\frac{v}{n}}\in  \Gamma(T_{N'},\cO^*_{T_{N'}})$.
\item[b)] Suppose $k$ contains $\mu_n$, the cyclic group of order $n$.
Then $T_N[\sqrt[n]{\chi^v}]$ is reduced, with $d$ irreducible components, and irreducible decomposition
$
T_N[\sqrt[n]{\chi^v}]=\sqcup_{\zeta\in \mu_d}T_N[\sqrt[n']{\zeta \chi^{v'}}].
$
Each irreducible component is isomorphic to $T_{N'}$ over $T_N$.
\end{itemize}
\end{prop}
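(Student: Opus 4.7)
My plan is to reduce everything to a direct computation in the affine coordinate ring $k[M]$ of $T_N$. Since $T_N$ is affine and $\cL=\cO_{T_N}$, Step~1 in the construction of $n$-th roots identifies $T_N[\sqrt[n]{\chi^v}]$ with $\Spec(k[M][T]/(T^n-\chi^v))$, and $\sqrt[n]{\chi^v}$ is the class of $T$. Both parts thus become statements about the $k[M]$-algebra $R=k[M][T]/(T^n-\chi^v)$.

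For (a), I would produce a $k[M]$-algebra homomorphism
$$
\varphi\colon R\to k[M'],\qquad T\mapsto \chi^{v/n},
$$
well-defined because $(\chi^{v/n})^n=\chi^v$, and show it is an isomorphism by comparing bases. The hypothesis $n'=n$ means $v/n$ has order exactly $n$ in $M'/M$, so $M'/M$ is cyclic of order $n$ generated by $v/n$; consequently $\{\chi^{iv/n}\}_{0\le i<n}$ is a free $k[M]$-basis of $k[M']$. On the source side, $\{T^i\}_{0\le i<n}$ is a free $k[M]$-basis of $R$, and $\varphi$ sends one basis to the other. Passing to $\Spec$, the morphism $T_{N'}\to T_N$ dual to $M\subseteq M'$ is identified with $T_N[\sqrt[n]{\chi^v}]\to T_N$, and $\chi^{v/n}\in k[M']$ with $\sqrt[n]{\chi^v}$; the universal property is inherited from the construction of the root.

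For (b), the key identity is the factorization
$$
T^n-\chi^v=T^{n'd}-(\chi^{v'})^d=\prod_{\zeta\in\mu_d}(T^{n'}-\zeta\chi^{v'}),
$$
valid because $\mu_d\subseteq\mu_n\subset k^\times$. The factors are pairwise coprime in $k[M][T]$, as the difference of two of them is $(\zeta_2-\zeta_1)\chi^{v'}$, a unit in $k[M]$. By the Chinese remainder theorem,
$$
R\cong \prod_{\zeta\in\mu_d}R_\zeta,\qquad R_\zeta=k[M][T]/(T^{n'}-\zeta\chi^{v'}).
$$
For each $\zeta\in\mu_d$ I would pick $\eta\in k^\times$ with $\eta^{n'}=\zeta$ (possible since $k$ is algebraically closed, and then $\eta^n=\zeta^d=1$, so automatically $\eta\in\mu_n$). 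Remark~\ref{unc} then gives $R_\zeta\cong k[M][T]/(T^{n'}-\chi^{v'})$ via $T\mapsto\eta T$. Because $v'/n'=v/n$ and $n'$ is by definition the smallest positive integer $i$ with $iv/n\in M$, the pair $(v',n')$ satisfies the hypothesis of (a) with $n'$ playing the role of $n$; applying (a) yields $R_\zeta\cong k[M']$. Thus $T_N[\sqrt[n]{\chi^v}]$ is the disjoint union of $d$ copies of $T_{N'}$ over $T_N$, each irreducible (being a torus), so the scheme is reduced; by construction the $\zeta$-component is $T_N[\sqrt[n']{\zeta\chi^{v'}}]$.

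The only mildly delicate point is the bookkeeping that the derived datum $(v',n')$ really satisfies the hypothesis of part (a), which is immediate from the definition of $n'$ together with $v'/n'=v/n$. The rest is the basis comparison in (a) and a Chinese remainder computation in (b); no deeper input is needed.
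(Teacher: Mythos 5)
Your proof is correct, and it follows the same basic reduction as the paper (work in $k[M]$, factor $T^n-\chi^v=\prod_{\zeta\in\mu_d}(T^{n'}-\zeta\chi^{v'})$, absorb $\zeta$ by Remark~\ref{unc}, identify the remaining factor with $k[M']$), but it packages two steps differently. For a), the paper never passes to the explicit model $k[M][T]/(T^n-\chi^v)$: it verifies the universal property of $(k[M'],\chi^{v/n})$ directly, using the unique representation $m'=m+\frac{i}{n}v$; you instead exhibit an explicit $k[M]$-algebra isomorphism onto $k[M']$ by matching the free basis $\{T^i\}$ with the coset basis $\{\chi^{iv/n}\}$ — the same unique-representation fact, used to build an isomorphism rather than to check universality. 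For b), the paper deduces that the components are disjoint from normality of $T_N$ plus \'etaleness of the cover, whereas your comaximality/Chinese remainder argument (the difference of two factors being the unit $(\zeta_2-\zeta_1)\chi^{v'}$) yields the product decomposition, hence disjointness and reducedness, in one elementary stroke; this is arguably cleaner since it needs no geometric input. One small caveat: $k$ is not assumed algebraically closed in this subsection, so your parenthetical appeal to algebraic closedness to choose $\eta$ with $\eta^{n'}=\zeta$ is not available as stated; but as you yourself note, any such $\eta$ satisfies $\eta^n=\zeta^d=1$, so it lies in $\mu_n\subseteq k$ by the hypothesis of b), and the choice can be made in $k$ — exactly the paper's remark that $\sqrt[n']{\zeta}\in k$.
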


\begin{proof} a) We check that $k[M]\subseteq k[M']$ and $\chi^v=(\chi^{\frac{v}{n}})^n$ 
satisfy the universal property of the root. Let 
$g\colon k[M]\to B$ be a ring homomorphism such that $g(\chi^v)=b^n$
for some $b\in B$. The assumption $n'=n$ is equivalent to $\frac{i}{n}v\notin M$
for every $0<i<n$. That is every element $m'\in M'$ has a unique representation
$$
m'=m+\frac{i}{n}v\ (m\in M,0\le i\le n-1).
$$
Define $g'\colon k[M']\to B$ by $g'(c\chi^{m'})=g(c\chi^m)\cdot b^i \ (c\in k)$.
This induces a ring homomorphism, the unique extension of $g$ to 
$k[M']$ such that $g(\chi^{\frac{v}{n}})=b$.

b) We have 
$
T^n-\chi^v=\prod_{\zeta\in \mu_d}(T^{n'}-\zeta \chi^{v'}).
$
By a), each factor is irreducible. Therefore
$T_N[\sqrt[n]{\chi^v}]$ is reduced, with $d$ irreducible components, and irreducible
decomposition
$$
T_N[\sqrt[n]{\chi^v}]=\cup_{\zeta\in \mu_d}T_N[\sqrt[n']{\zeta \chi^{v'}}].
$$
The union is disjoint, since $T_N$ is normal and $T_N[\sqrt[n]{\chi^v}]\to T_N$ is 
\'etale. Since $k$ contains $\mu_n$, $\sqrt[n']{\zeta}\in k$ for every 
$\zeta\in \mu_d$. By Remark~\ref{unc}, each irreducible component is isomorphic over 
$T_N$ with $T_{N'}$.
\end{proof}

\begin{rem} With the same proof, we obtain:
let $T_N\subset T_N\emb(\Delta)=X$ be a torus embedding such that 
$\chi^v$ is a regular function on $X$.
\begin{itemize}
\item[a)] Suppose $n'=n$. Then the $n$-th root of $\chi^v\in  \Gamma(X,\cO_X)$ 
is the toric morphism 
$$
X'=T_{N'}\emb(\Delta)\to X=T_N\emb(\Delta)
$$ 
induced by the inclusion $N'\subseteq N$, endowed with the regular function 
$\chi^{\frac{v}{n}}\in  \Gamma(X',\cO_{X'})$.
\item[b)] Suppose $k$ contains $\mu_n$. Then 
$X[\sqrt[n]{\chi^v}]$ is reduced, with $d$ irreducible components, and 
irreducible decomposition
$
X[\sqrt[n]{\chi^v}]=\cup_{\zeta\in \mu_d}X[\sqrt[n']{\zeta \chi^{v'}}].
$
Each irreducible component is isomorphic over $X$ with $T_{N'}\emb(\Delta)$.
\end{itemize}
\end{rem}


\subsection{Roots of units in a field}


Let $K$ be a field. Let $n$ be a positive integer which is not divisible by the characteristic of $K$.
The polynomial $T^n-1\in K[T]$ is separable. It has $n$ distinct roots in the algebraic closure of $K$,
denoted
$$
\mu_n(K)=\{x\in \bar{K};x^n=1\}.
$$
The multiplicative group $\mu_n(K)$ must be cyclic, hence isomorphic to $\Z/n\Z$.
An $n$-th root of unity $\zeta\in \mu_n(K)$ is a generator if and only if $\zeta^{n/d}\ne 1$ for every 
divisor $1<d\mid n$. A generator is called a {\em primitive $n$-th root of unity of $K$}.
We have $T^n-1=\prod_{\zeta\in \mu_n(K)}(T-\zeta)$ in $\bar{K}[T]$. Therefore
$$
T^n-x^n=\prod_{\zeta\in \mu_n(K)}(T-\zeta x)
$$
for every $x\in K^\times$. If $K$ contains $\mu_n(K)$, the decomposition holds in $K[T]$.

Let $f\in K^\times$. Consider the integral extension $K\to K[T]/(T^n-f)$. We have
$$
K[T]/(T^n-f)=\oplus_{i=0}^{n-1}Kt^i.
$$

\begin{lem}\label{de} Let $1\le d\mid n$ be the maximal divisor of $n$ such that $T^d-f$ has a 
root in $K$, say $g$. Suppose $K$ contains $\mu_d(K)$ and a root of $T^4+4$.
Let $n=dn'$. Then 
$$
T^n-f=\prod_{\zeta\in \mu_d(K)}(T^{n'}-\zeta g)
$$
is the decomposition into distinct irreducible factors in $K[T]$. The polynomial classes
$$
P_\zeta=\frac{1}{\prod_{\zeta'\in \mu_d(K)\setminus \zeta}(\zeta g-\zeta' g)} \frac{T^n-f}{T^{n'}-\zeta g}\in 
\frac{K[T]}{(T^n-f)} \ (\zeta\in \mu_d(K))
$$
induce an isomorphism of $K$-algebras
$$
\prod_{\zeta\in \mu_d(K)}\frac{K[T]}{(T^{n'}-\zeta g)}\isoto \frac{K[T]}{(T^n-f)}, 
(\alpha_\zeta)_\zeta\mapsto \sum_\zeta \alpha_\zeta P_\zeta.
$$
On the left hand side, each factor is a separable field extension of $K$. In particular, the ring
$K[T]/(T^n-f)$ is reduced; and an integral domain if and only if $d=1$.

The cyclic group $\mu_d(K)$ acts on $K[T]/(T^n-f)$, trivially on $K$ and by multiplication on $T$.
Under the isomorphism, this corresponds to the $\mu_d(K)$-action on the product given by the
partial permutations $\xi\colon (\alpha_\zeta)_\zeta\mapsto (\alpha_{\xi^{n'}\zeta})_\zeta$.
Moreover, if $K$ contains $\mu_n(K)$, then $\mu_n(K)$ acts on both sides by the same formulas, 
and the action is transitive (hence it permutes the factor fields).
\end{lem}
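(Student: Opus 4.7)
My plan is to first obtain the factorization of $T^n-f$ in $K[T]$, then check each factor is irreducible via the Vahlen-Capelli criterion, use the Chinese Remainder Theorem to deduce the product decomposition, and finally transport the group actions across the isomorphism. Since $\mu_d(K)\subseteq K$ and $g^d=f$, the identity $X^d-g^d=\prod_{\zeta\in\mu_d(K)}(X-\zeta g)$ holds in $K[X]$, and substituting $X=T^{n'}$ gives
$$T^n-f=\prod_{\zeta\in\mu_d(K)}(T^{n'}-\zeta g).$$
To see each factor is irreducible over $K$, I will invoke the Vahlen-Capelli criterion: $T^{n'}-a$ is irreducible iff $a\notin K^p$ for every prime $p\mid n'$, and in addition $a\notin -4K^4$ when $4\mid n'$. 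The first condition is forced by the maximality of $d$: if $\zeta g=h^p$ for some $h\in K$ and prime $p\mid n'$, then $h^{pd}=\zeta^d g^d=f$, while $pd$ is a divisor of $n$ strictly larger than $d$, a contradiction. The second condition is exactly where the $T^4+4$ hypothesis enters: a root $\beta\in K$ with $\beta^4=-4$ shows $-4\in K^4$, so $-4K^4=K^4\subseteq K^2$, and $\zeta g\in -4K^4$ would already violate the $p=2$ case of the first condition. Hence each $T^{n'}-\zeta g$ is irreducible, and since the factors have distinct constant terms they are pairwise coprime.

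The Chinese Remainder Theorem now supplies an isomorphism
$$\prod_{\zeta\in\mu_d(K)}K[T]/(T^{n'}-\zeta g)\isoto K[T]/(T^n-f),$$
and the stated polynomials $P_\zeta$ are nothing but the associated CRT idempotents: substituting $T^{n'}=\zeta g$ in the definition of $P_\zeta$ makes its numerator equal to the normalizing denominator, so $P_\zeta\equiv 1\pmod{T^{n'}-\zeta g}$, while for $\zeta'\neq\zeta$ the factor $T^{n'}-\zeta' g$ already divides the numerator, giving $P_\zeta\equiv 0\pmod{T^{n'}-\zeta' g}$. Each factor $K[T]/(T^{n'}-\zeta g)$ is a field by irreducibility, and the extension is separable over $K$ because $\gcd(n',\charact K)=1$ (since $n'\mid n$ and $\charact K\nmid n$). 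Consequently $K[T]/(T^n-f)$ is reduced, and is an integral domain exactly when there is a single factor, i.e.\ when $d=1$.

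For the group actions, any $\xi\in K$ with $\xi^n=1$ (in particular $\xi\in\mu_d(K)$, and more generally $\xi\in\mu_n(K)$ when $K\supseteq\mu_n(K)$) fixes $T^n-f$ under the substitution $T\mapsto\xi T$, so this substitution induces a $K$-algebra action on $K[T]/(T^n-f)$. A direct calculation shows $P_\zeta(\xi T)\equiv P_{\xi^{-n'}\zeta}(T)\pmod{T^n-f}$: substituting $\xi T$ in the numerator $\prod_{\zeta'\neq\zeta}(T^{n'}-\zeta' g)$ and reindexing $\zeta'\mapsto\xi^{n'}\zeta'$ turns it into a constant multiple of $\prod_{\zeta''\neq\xi^{-n'}\zeta}(T^{n'}-\zeta'' g)$, and the extra constant cancels against the normalizing denominators. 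Hence $\xi$ permutes the idempotents by $e_\zeta\mapsto e_{\xi^{-n'}\zeta}$, which on tuples is exactly the partial permutation $(\alpha_\zeta)\mapsto(\alpha_{\xi^{n'}\zeta})$ of the statement. When $K\supseteq\mu_n(K)$, the same formula defines an action of all of $\mu_n(K)$; this action on the $d$ factors is transitive because the power map $\mu_n(K)\to\mu_d(K),\ \xi\mapsto\xi^{n'}$ is surjective, its image being a cyclic subgroup of $\mu_n(K)$ of order $n/n'=d$. The main obstacle I foresee is the Vahlen-Capelli irreducibility step, particularly its $4\mid n'$ clause, which is precisely what necessitates the unusual $T^4+4$ hypothesis; the remainder is routine bookkeeping with the CRT isomorphism.
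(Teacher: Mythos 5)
Your proof is correct and follows essentially the same route as the paper: the irreducibility of each $T^{n'}-\zeta g$ via the Capelli--Vahlen criterion (the paper cites Lang, p.~221), with the exceptional $-4x^4$ case absorbed by the root of $T^4+4$ and the maximality of $d$, followed by the orthogonal-idempotent/CRT decomposition using the same $P_\zeta$, and the same computation of the $\mu_d(K)$- and $\mu_n(K)$-actions with transitivity from surjectivity of $\xi\mapsto\xi^{n'}$ onto $\mu_d(K)$. The only cosmetic difference is that you verify $P_\zeta(\xi T)\equiv P_{\xi^{-n'}\zeta}(T)$ by direct substitution, whereas the paper deduces the permutation of the $P_\zeta$ from their idempotency.
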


\begin{proof} The decomposition of $T^n-f$ is clear, and the factors are distinct. 
Suppose by contradiction that $T^{n'}-\zeta g$ is not irreducible. It follows from~\cite[Theorem III.9.16]{La65}
that there exists $x\in K$ 
such that a) $\zeta g=x^p$ for some prime $p\mid n'$; or b) $\zeta g=-4x^4$ and $4\mid n'$.
In case b), $\zeta g=(yx)^4$ where $y$ is a root of $T^4+4$ in $K$. Therefore
$\zeta g=h^{d'}$ for some $h\in K$ and $1<d'\mid n'$. Then 
$h^{dd'}=f$ and $d<dd'\mid n$, contradicting the maximality of $d$.

The standard formula for partial fractions with distinct linear factors gives
$1=\sum_{\zeta\in \mu_d(K)}P_\zeta$. We have $P_\zeta\cdot P_{\zeta'}=0$
if $\zeta\ne \zeta'$, and $P_\zeta^2=P_\zeta$. We obtain the desired isomorphism.

Consider the $\mu_d(K)$-action. Since $P_\zeta$ is idempotent, it follows that 
$(P_\zeta)^\xi=P_{\xi^{-n'}\zeta}$ for every $\xi\in \mu_d(K)$. Therefore the isomorphism
transforms this action into the partial permutations $\xi\colon 1_\zeta\mapsto 1_{\xi^{-n'}\zeta}$.
That is $\xi\colon (\alpha_\zeta)_\zeta\mapsto (\alpha_{\xi^{n'}\zeta})_\zeta$.
Note that the action is trivial if and only if $d\mid n'$.

Suppose moreover that $\mu_n(K)\subset K$. It acts on both sides by the same
formulas. Note that of $\xi\in \mu_n(K)$, then multiplication by $\xi^{-n'}$ induces a 
bijection of $\mu_d(K)$. The action is transitive, since for every $\zeta,\zeta'\in \mu_d(K)$
there exists $\xi\in \mu_n(K)$ such that $\xi^{n'}\zeta'=\zeta$.
\end{proof}

Note that any field which contains an algebraically closed field, contains
a root of $T^4+4$. And $T^4+4c^4=(T^2-2cT+2c^2)(T^2+2cT+2c^2)$.

\begin{lem}\label{rn}
Let $A\subseteq K$ be a domain, integrally closed in $K$.
Suppose $A$ contains $\mu_n(K)$, and the reciprocal of the Vandermonde 
determinant associated to some ordering of the elements of $\mu_n(K)$.
Consider the ring homomorphism $A\to K[T]/(T^n-f)$. The integral closure of $A$ is 
$$
\bar{A}=\oplus_{i=0}^{n-1} \{x\in K; x^n f^i\in A\} t^i.
$$
Moreover, under the product decomposition of Lemma~\ref{de}, $\bar{A}$
corresponds to the product of integral closures of $A$ in the factor fields,
and each of the $d$-factors has the explicit description
$$
\bar{A}_\zeta=\oplus_{j=0}^{n'-1} \{x\in K; x^n f^j\in A\}t_\zeta^j. 
$$
The integral extension $A\to \bar{A}$ is Galois, with Galois group $\mu_n(K)$.
The Galois group permutes the factors of the decomposition.
\end{lem}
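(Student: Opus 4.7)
The plan is to introduce the candidate $\bar R:=\oplus_{i=0}^{n-1}B_i t^i$ with $B_i=\{x\in K: x^n f^i\in A\}$, prove $\bar A=\bar R$ in two stages, and then deduce the remaining statements. First I would check that $\bar R$ is a subring of $K[T]/(T^n-f)$ containing $A$: since $t^i\cdot t^j$ equals $t^{i+j}$ or $ft^{i+j-n}$, and since in both cases the relevant identity $(xy)^n f^{i+j}\in A$ or $(xyf)^n f^{i+j-n}\in A$ follows from $x^n f^i,y^n f^j\in A$, this is elementary. That $\bar R$ consists of integral elements is also immediate: each generator $xt^i$ with $x^n f^i\in A$ satisfies the monic relation $(xt^i)^n=x^n f^i\in A$, and integral elements form a ring.

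The main step is the reverse inclusion $\bar A\subseteq \bar R$, where the Vandermonde hypothesis is essential. The group $\mu_n(K)$ acts on $K[T]/(T^n-f)$ over $A$ by $\xi\cdot t=\xi t$, and this action preserves integrality. For $\alpha=\sum_{i=0}^{n-1}x_i t^i\in \bar A$, the $n$ conjugates $\alpha^\xi=\sum_i \xi^i x_i t^i$ are related to the components $x_i t^i$ through the linear system with Vandermonde matrix $(\xi^i)$, whose determinant is a unit of $A$ by hypothesis. Inverting this matrix, each $x_i t^i$ becomes an $A$-linear combination of integral elements, hence is itself integral. Then $(x_i t^i)^n=x_i^n f^i\in K$ is integral over $A$, and the hypothesis that $A$ is integrally closed in $K$ forces $x_i^n f^i\in A$, i.e.\ $\alpha\in \bar R$.

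For the Galois assertion, the $\mu_n(K)$-action preserves $\bar A=\bar R$, and the fixed subring is visibly $A$, since $\sum x_i t^i$ is fixed exactly when every $x_i$ with $i>0$ vanishes; together with $|\mu_n(K)|=n$ this gives Galois. For the factor decomposition I would invoke Lemma~\ref{de} and the fact that the integral closure in a finite product of $K$-algebras is the product of the integral closures. Applying the main argument to each field $K[T]/(T^{n'}-\zeta g)$ with $\mu_{n'}(K)\subseteq \mu_n(K)$ yields $\bar A_\zeta$; the required Vandermonde reciprocal for $\mu_{n'}(K)$ lies in $A$ because every difference $\zeta-\zeta'$ of distinct elements of $\mu_n(K)$ is itself a unit of $A$, being a factor of the unit $\prod_{\xi\ne \xi'}(\xi-\xi')$. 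This produces $\bar A_\zeta=\oplus_{j=0}^{n'-1}\{x: x^{n'}g^j\in A\}t_\zeta^j$, and the identity $(x^{n'}g^j)^d=x^n f^j$ together with $A$ integrally closed in $K$ identifies this with the claimed description in terms of $x^n f^j\in A$. The permutation of factors is inherited from Lemma~\ref{de}. The main obstacle is the Vandermonde inversion step, which is the only place where both hypotheses on $A$ are used to reduce an integrality condition on a sum to conditions on each monomial summand; the remaining work is bookkeeping with the product decomposition.
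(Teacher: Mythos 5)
Your proposal is correct and follows essentially the same route as the paper: Vandermonde inversion under the $\mu_n(K)$-action to reduce integrality of $\sum_i x_i t^i$ to that of each monomial $x_i t^i$, the criterion $x_i t^i$ integral $\iff x_i^n f^i \in A$, and then the factor-wise description via Lemma~\ref{de} together with the $d$-th power trick and integral closedness of $A$ in $K$. Your converse for the monomial step (taking the $n$-th power of the integral element $x_it^i$, which lands in $K$) is a slightly cleaner variant of the paper's coefficient-extraction argument, and your remark that differences of roots of unity are units, so the Vandermonde hypothesis passes to $\mu_{n'}(K)$, fills in a detail the paper leaves implicit.
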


\begin{proof} Each element $\alpha\in K[T]/(T^n-f)$ has a unique representation
$$
\alpha=\sum_{i=0}^{n-1}x_it^i \ (x_i\in K).
$$

{\em Step 1:} We claim that $\alpha\in \bar{A}$ if and only if $x_it^i\in \bar{A}$, 
for every $i$. Indeed, the converse is clear. Suppose $\alpha\in \bar{A}$. 
The cyclic group $\mu_n(K)$ acts on $K[T]/(T^n-f)$, trivially on 
$K$, and by multiplication on $T$. It also acts on $A$, since $A$ contains 
$\mu_n(K)$. Therefore $\alpha^\zeta=\sum_{i=0}^{n-1}\zeta^ix_it^i\in \bar{A}$, 
for every $\zeta\in \mu_n(K)$. Choose an ordering 
$\zeta_0,\zeta_1,\ldots,\zeta_{n-1}$ of $\mu_n(K)$. Since the Vandermonde 
determinant 
$$
\det\begin{bmatrix}
\zeta_0^0 & \zeta_0^1 & \cdots & \zeta_0^{n-1} \\
\zeta_1^0 & \zeta_1^1 & \cdots & \zeta_1^{n-1} \\
\vdots      & \vdots       & \ddots & \vdots \\
\zeta_{n-1}^0 & \zeta_{n-1}^1 & \cdots & \zeta_{n-1}^{n-1}
\end{bmatrix}
=\prod_{0\le i<j\le n-1}(\zeta_j-\zeta_i)
$$
is a unit in $A$, each $x_it^i$ is a combination of the $\alpha^\zeta$'s, with 
coefficients in $A$. Therefore $x_it^i\in \bar{A}$.

{\em Step 2:}  Let $x\in K$ and $0\le i<n$. We claim that $xt^i\in \bar{A}$ 
if and only if $x^nf^i\in A$. Indeed, denote $y=x^nf^i\in K$. We have $y=(xt^i)^n$.
If $y\in A$, then $xt^i\in \bar{A}$. Conversely, suppose 
there exists an equation in $K[T]/(T^n-f)$
$$
(xt^i)^m+\sum_{l=1}^m a_l (xt^i)^{m-l}=0\ (a_l \in A).
$$
Raising the equation to some power, we may suppose $n\mid m$. 
Then $m-l\equiv 0\pmod{n}$ if and only if $n\mid l$. So
the coefficient of $t^0$ in the equation is 
$$
y^{\frac{m}{n}}+\sum_{n\mid l}a_l y^{\frac{m-l}{n}}=0.
$$ 
Then $y$ is algebraic over $A$. Therefore $y\in A$.

{\em Step 3:} Consider the product decomposition from Lemma~\ref{de}.
We have $(\sum_\zeta \alpha_\zeta P_\zeta)^m=\sum_\zeta \alpha^m_\zeta P_\zeta$
for every $m\ge 0$. Therefore $\sum_\zeta \alpha_\zeta P_\zeta$ is integral over $A$
if and only if each $\alpha_\zeta$ is integral. By Steps 1 and 2, the integral closure 
of $A$ in $K[T]/(T^{n'}-\zeta g)$ is the integrally closed domain
$$
\bar{A}_\zeta=\oplus_{j=0}^{n'-1}\{x\in K; x^{n'}(\zeta g)^j\in A\}t_\zeta^j.
$$
Since $A$ is integrally closed in $K$, $x^{n'}(\zeta g)^j\in A$ if and only if 
$(x^{n'}(\zeta g)^j)^d\in A$. That is $x^nf^q\in A$. Therefore 
$$
\bar{A}_\zeta=\oplus_{j=0}^{n'-1}\{x\in K; x^nf^j\in A\}t_\zeta^j.
$$
In particular, the product decomposition of Lemma~\ref{de} induces an isomorphism
of $A$-algebras
$$
\prod_{\zeta\in \mu_d(K)} \bar{A}_\zeta \isoto \bar{A}.
$$
\end{proof}

\begin{rem}\label{par}
Denote $X=\Spec A$. If $f$ is a non-zero rational function on $X$, that
is $f\in Q(A)$, then
$
\{x\in Q(A);x^nf^i\in A\}=\Gamma(X,\cO_X(\lfloor \frac{i}{n}\dv(f)\rfloor)).
$
\end{rem}


\subsection{Irreducible components, normalization of roots}


\begin{lem}\label{inc}
Let $A$ be an integral domain, integrally closed in its field of fractions $Q$. Suppose $Q$ contains
a root of $T^4+4$. Let $0\ne a\in A$ and $n\ge 1$.
\begin{itemize}
\item[1)] Suppose $T^d-a$ has no root in $A$, for every divisor $1<d\mid n$. Then $A[T]/(T^n-a)$ 
is an integral domain with quotient field $Q[T]/(T^n-a)$.
\item[2)] Suppose $\Char(Q)\nmid n$, and $A$ contains $\mu_n(Q)$ and the reciprocal of the
Vandermonde determinant associated to some ordering of the elements of $\mu_n(Q)$ 
(e.g. $A$ contains an algebraically closed field whose characteristic does not divide $n$).
Then the ring $A[T]/(T^n-a)$ has no nilpotents, and the integral closure in its total ring of fractions 
$Q[T]/(T^n-a)$ is 
$
\oplus_{i=0}^{n-1}\{q\in Q;q^na^i\in A\}t^i.
$
\end{itemize}
\end{lem}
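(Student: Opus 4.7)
The plan is to reduce both parts to Lemmas~\ref{de} and~\ref{rn}, viewed over the fraction field $Q$, after identifying $Q[T]/(T^n-a)$ as the relevant extension of $B := A[T]/(T^n-a)$.

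For part 1), I would first observe that the hypothesis ``$T^d-a$ has no root in $A$'' is equivalent to ``$T^d-a$ has no root in $Q$'' for every divisor $1<d\mid n$: any root $y\in Q$ would be integral over $A$ via the monic relation $y^d=a\in A$, hence would lie in $A$ by integral-closedness. Thus the maximal $d\mid n$ for which $T^d-a$ has a root in $Q$ is $d=1$. Applying Lemma~\ref{de} over $K=Q$ (with $\mu_1(Q)=\{1\}$ trivially available and a root of $T^4+4$ in $Q$ by hypothesis), I conclude that $T^n-a$ is irreducible in $Q[T]$, so $Q[T]/(T^n-a)$ is a field. Since $T^n-a$ is monic, division with remainder yields the same result in $A[T]$ and in $Q[T]$, so the natural homomorphism $B\to Q[T]/(T^n-a)$ is injective. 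Therefore $B$ is a subring of a field, hence an integral domain, and its fraction field --- containing $A$, the class $t$, and all inverses of nonzero elements of $A$ --- coincides with $Q[T]/(T^n-a)$.

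For part 2), Lemma~\ref{de} applied over $K=Q$ (with the maximal $d\mid n$ such that $T^d-a$ has a root in $Q$) expresses $Q[T]/(T^n-a)$ as a product of $d$ separable field extensions of $Q$; in particular it is reduced. The same monic-division argument gives an embedding $B\hookrightarrow Q[T]/(T^n-a)$, so $B$ has no nilpotents. To recover the description of the integral closure from Lemma~\ref{rn}, I next identify $Q[T]/(T^n-a)$ with the total ring of fractions $Q(B)$. Since $B$ is free of rank $n$ over $A$, one has $Q[T]/(T^n-a)=B\otimes_A Q$, obtained from $B$ by inverting $A\setminus 0$; the elements of $A\setminus 0$ act as non-zero-divisors on $B$, giving an inclusion $Q[T]/(T^n-a)\hookrightarrow Q(B)$. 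Conversely, any non-zero-divisor of $B$ remains a non-zero-divisor in the overring $Q[T]/(T^n-a)$, which, being a finite product of fields, has every non-zero-divisor a unit; this gives $Q(B)\subseteq Q[T]/(T^n-a)$. With $Q(B)=Q[T]/(T^n-a)$ established, Lemma~\ref{rn} (whose hypotheses are exactly those assumed here, with $f=a$) directly identifies the integral closure of $A$ --- which coincides with the integral closure of $B$, since $t$ already satisfies $t^n-a=0$ --- inside $Q(B)$ as $\oplus_{i=0}^{n-1}\{q\in Q;\ q^n a^i\in A\}\,t^i$.

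The main technical point is the identification of $Q[T]/(T^n-a)$ with the total ring of fractions of $B$ in part 2); once that is settled, Lemmas~\ref{de} and~\ref{rn} do essentially all the work. The remaining reductions from $A$-statements to $Q$-statements (the equivalence of having a root in $A$ versus in $Q$ via integral-closedness, and the embedding $B\hookrightarrow Q[T]/(T^n-a)$ via monic division) are routine.
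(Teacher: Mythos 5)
Your proposal is correct and follows essentially the same route as the paper: both parts are reduced to Lemmas~\ref{de} and~\ref{rn} over $K=Q$, with the observation that integral-closedness lets one pass between roots in $A$ and in $Q$, and that the integral closure of $A$ agrees with that of $A[T]/(T^n-a)$ because $t$ is integral over $A$. The only differences are cosmetic: you establish reducedness by embedding $A[T]/(T^n-a)$ into the reduced ring $Q[T]/(T^n-a)$ rather than by the paper's explicit factorization over $A$, and you spell out the identification of $Q[T]/(T^n-a)$ with the total ring of fractions, which the paper leaves implicit.
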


\begin{proof} 1) Since $A$ is integrally closed, $T^d-a$ has a root in $A$ if and only
if it has a root in $Q$. Therefore $d=1$ in Lemma~\ref{de}. Therefore $Q[T]/(T^n-a)$
is a field. The application $A[T]/(T^n-a)\to Q[T]/(T^n-a)$ is injective. Therefore 
$A[T]/(T^n-a)$ is an integral domain with quotient field $Q[T]/(T^n-a)$.

2) Let $1\le d\mid n$ be the maximal divisor such that $a={a'}^d$ for some $a'\in A$. 
Denote $n'=n/d$. We have
$$
T^n-a=\prod_{\zeta\in \mu_d(Q)}(T^{n'}-\zeta a')
$$
By the maximality of $d$, each $T^{n'}-\zeta a'$ is irreducible in $Q[T]$. Since $A^\times$
contains $\mu_d$, there are no multiple factors. Therefore $A[T]/(T^n-a)$ is reduced, 
with irreducible components $A[T]/(T^{n'}-\zeta a')$. 

Finally, since $a\in A$, the homomorphism $A\to Q[T]/(T^n-a)$ factors as 
$A\to A[T]/(T^n-a)\to Q[T]/(T^n-a)$. The ring extension $A\to A[T]/(T^n-a)$ is 
integral. Therefore the integral closure of $A$ in $Q[T]/(T^n-a)$,
computed in Lemma~\ref{rn}, coincides with the integral closure of $A[T]/(T^n-a)=A[\sqrt[n]{a}]$
in its total ring of fractions $Q[T]/(T^n-a)$.
\end{proof}

\begin{prop}\label{ic} Let $X$ be normal and irreducible scheme.
Suppose the field of fractions $Q(X)$ contains a root of $T^4+4$.
Let $n\ge 1$ and $s\in \Gamma(X,\cL^n)$.
\begin{itemize}
\item[a)] Suppose $T^d-s$ has no root in $\Gamma(X,\cL^{\frac{n}{d}})$, for every divisor $1<d\mid n$.
Then $X[\sqrt[n]{s}]$ is reduced and irreducible. 
\item[b)] Let $1\le d\mid n$ be the maximal divisor of $n$ with the property that $T^d-s$ 
has a root in $\Gamma(X,\cL^{\frac{n}{d}})$, say $s'$. Suppose $\Gamma(X,\cO_X^\times)$ 
contains $\mu_d$. Then $X[\sqrt[n]{s}]$ is reduced, with irreducible decomposition
$$
X[\sqrt[n]{s}]=\cup_{\zeta\in \mu_d}X[\sqrt[\frac{n}{d}]{\zeta s'}].
$$
\item[c)] Suppose $s\ne 0$. Let $U\subseteq X$ be a non-empty open subset such that $\cL|_U$
has a nowhere zero section $u$. Let $s|_U=fu^n$ with $f\in \Gamma(U,\cO_X)$.
The normalization of $X[\sqrt[n]{s}]$ coincides with the normalization of
$X$ in the ring extension $Q(X)\to Q(X)[T]/(T^n-f)$.
\end{itemize}
\end{prop}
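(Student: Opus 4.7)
\emph{Part (a).} The plan is to compute the generic fiber of the finite flat morphism $\pi\colon X[\sqrt[n]{s}]\to X$ (Lemma~\ref{bp}) and then invoke the first part of Lemma~\ref{inc}. Cover $X$ by affine opens $U=\Spec A$ on which $\cL$ is trivialized by some section $u$; then $s|_U=fu^n$ for a unique $f\in A$ and $\pi^{-1}(U)=\Spec A[T]/(T^n-f)$. Setting $K=Q(X)$, the generic fiber of $\pi$ is $K[T]/(T^n-f_K)$. The central observation is that the hypothesis of (a) forces $T^d-f_K$ to have no root in $K$ for any $1<d\mid n$: if such a root $\sigma\in K$ existed, the rational section $\sigma u^{n/d}$ of $\cL^{n/d}$ would have $d$-th power $fu^n=s\in\Gamma(X,\cL^n)$; by normality of $X$, $\dv(\sigma u^{n/d})\ge 0$, hence $\sigma u^{n/d}\in \Gamma(X,\cL^{n/d})$, contradicting the hypothesis. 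Lemma~\ref{inc} then shows that $K[T]/(T^n-f_K)$ is a field, so flatness and finiteness of $\pi$ force $X[\sqrt[n]{s}]$ to have a unique irreducible component; reducedness follows because the free $A$-module $A[T]/(T^n-f)$ embeds into the field $K[T]/(T^n-f_K)$.

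\emph{Part (b).} The earlier identity $X[\sqrt[n'q]{s^q}]=\cup_{\zeta\in\mu_q}X[\sqrt[n']{\zeta s}]$, applied with $q=d$, $n'=n/d$, and the relation $s=(s')^d$, gives
\[
X[\sqrt[n]{s}]=\bigcup_{\zeta\in \mu_d}X[\sqrt[n/d]{\zeta s'}].
\]
I would then apply (a) to each summand: the hypothesis required is that $T^{d''}-\zeta s'$ have no root in $\Gamma(X,\cL^{n/(dd'')})$ for every $1<d''\mid n/d$, which follows from the maximality of $d$, since any such root $\sigma$ would satisfy $\sigma^{dd''}=\zeta^d(s')^d=s$ (using $\zeta\in \mu_d$), producing a root of $T^{dd''}-s$ with $dd''>d$. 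Each summand is therefore integral, and the display above is the irreducible decomposition of $X[\sqrt[n]{s}]$.

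\emph{Part (c) and main obstacle.} Under the assumption $s\ne 0$, parts (a) and (b) guarantee that $X[\sqrt[n]{s}]$ is generically reduced, so its normalization is well-defined as the integral closure in its total ring of fractions. Under a local trivialization by $u$, the generic stalk of $\pi_*\cO_{X[\sqrt[n]{s}]}$ is $K[T]/(T^n-f)$, where $f=s/u^n\in K^\times$ is the rational function determined by the trivialization; a change of $u$ multiplies $f$ by an $n$-th power and, by Remark~\ref{unc}, does not alter the ring extension. Hence the normalization of $X[\sqrt[n]{s}]$ coincides by construction with the normalization of $X$ in $K\to K[T]/(T^n-f)$, as claimed. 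The only nontrivial step in the whole proof is the passage from ``no global root of $T^d-s$'' to ``no root of $T^d-f_K$ in $K$'' in part (a), and it is precisely here that the normality of $X$ is indispensable; everything else is either a direct application of Lemma~\ref{inc} or of the factorization identity used in (b).
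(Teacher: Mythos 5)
Your proof is correct and follows essentially the same route as the paper: normality promotes a root of $T^d-f$ (over the generic point in your version, over a trivializing affine chart in the paper's) to a global root of $T^d-s$, Lemma~\ref{inc} then gives integrality, the factorization $T^n-(s')^d=\prod_{\zeta\in\mu_d}(T^{n/d}-\zeta s')$ handles b), and c) is exactly the content of Lemma~\ref{inc}.2 together with Remark~\ref{unc}. The only difference is organizational: you globalize via the generic fiber and torsion-freeness of the finite flat cover, whereas the paper covers $X$ by trivializing affine opens and glues, both resting on the same key lemmas.
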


\begin{proof} 
a) Let $U\subset X$ be a non-empty affine open subset such that $\cL|_U$ has a nowhere zero section $u$. 
We claim that $T^d-s|_U$
has no root in $\Gamma(U,\cL^{\frac{n}{d}})$, for every divisor $1<d\mid n$.
Indeed, if $s_d\in \Gamma(U,\cL^{\frac{n}{d}})$ is a root, then since $s_d^d=s
\in \Gamma(X,\cL^n)$, it follows that $s_d$ is the restriction to $U$ of some 
$s'\in \Gamma(X,\cL^{\frac{n}{d}})$. Then ${s'}^d$ and $s$ coincide
on the dense open subset $U$, hence they are equal. This contradicts our
assumption.

Let $s|_U=fu^n$ with $f\in A=\Gamma(U,\cO_X)$. The claim is equivalent to the 
following property: $T^d-f$ has no root in $A$,
for every divisor $1<d\mid n$. By Lemma~\ref{inc}, $\pi^{-1}(U)$ is reduced
and irreducible, and dominates $U$.

Since $X$ can be covered by subsets $U$ as above, it follows that 
$X[\sqrt[n]{s}]$ is reduced and irreducible.

b) We have $T^n-s=T^n-{s'}^d=\prod_{\zeta\in \mu_d}(T^\frac{n}{d}-\zeta s')$.
The factors are distinct.
Since $d$ is maximal, $T^{d'}-\zeta s'$ has no root in $\Gamma(X,\cL^{\frac{n}{dd'}})$,
for every divisor $1<d'\mid \frac{n}{d}$. By a),
$X[\sqrt[\frac{n}{d}]{\zeta s'}]$ are the irreducible components of $X[\sqrt[n]{s}]$.

c) This follows from Lemma~\ref{inc}.
\end{proof}

From Lemma~\ref{inc} and Remark~\ref{par}, we deduce

\begin{prop}\label{2.17}
Let $k$ be an algebraically closed field. Let $n$ be a positive integer which is not
divisible by $\Char(k)$. Let $X/k$ be a normal algebraic variety, let 
$\cL$ be an invertible $\cO_X$-module, and $0\ne s\in \Gamma(X,\cL^n)$. 
Let $D$ be the zero locus of $s$, an effective Cartier divisor on $X$. 
Denote by $\bar{X}[\sqrt[n]{s}]\to X[\sqrt[n]{s}]$ the normalization, and 
$\bar{\pi}\colon \bar{X}[\sqrt[n]{s}]\to X$ the induced morphism. 
$$
\xymatrix{
X[\sqrt[n]{s}]   \ar[d]_\pi &     \bar{X}[\sqrt[n]{s}]  \ar[l] \ar[dl]^{\bar{\pi}} \\
 X                             &  
}
$$ 
Then $\bar{\pi}$ is a Galois ramified cover, with Galois group cyclic of order $n$, 
and eigenspace decomposition
$$
\bar{\pi}_*\cO_{\bar{X}[\sqrt[n]{s}]}=\oplus_{i=0}^{n-1}\cL^{-i}(\lfloor \frac{i}{n}D\rfloor).
$$
\end{prop}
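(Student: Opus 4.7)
The plan is to combine Lemma~\ref{bp}(b) (the $\mu_n$-action on $X[\sqrt[n]{s}]$), Proposition~\ref{ic}(c) (the local description of $\bar{\pi}$ as the normalization in a Kummer extension), and Lemma~\ref{rn} (the explicit integral closure of that Kummer extension), together with Remark~\ref{par} to recognize the coefficient sheaves as $\cO_X(\lfloor \tfrac{i}{n}D\rfloor)$. Since $X$ is normal I may treat each irreducible component separately, so I assume $X$ irreducible. Since $k$ is algebraically closed with $\Char k \nmid n$, the group $\mu_n \subset k^\times$ has order $n$; by Lemma~\ref{bp}(b) it acts on $X[\sqrt[n]{s}]$ with $\pi$ as quotient, and by functoriality of normalization this action lifts uniquely to $\bar{X}[\sqrt[n]{s}]$ commuting with $\bar{\pi}$. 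Over $X \setminus D$ the section $s$ is nowhere zero, so $\pi$ is étale of degree $n$ there, and $\bar{\pi}$ restricts to an étale $\mu_n$-torsor. Since both $\bar{X}[\sqrt[n]{s}]$ and $X$ are normal and the quotient identification holds in codimension one, $\bar{\pi}$ globally realizes $X$ as the $\mu_n$-quotient, making it a cyclic Galois cover of order $n$ ramified exactly over $D$. Consequently $\bar{\pi}_*\cO_{\bar{X}[\sqrt[n]{s}]}$ splits as a sum of eigensheaves $\bigoplus_{i=0}^{n-1} \cB_i$, where $\zeta \in \mu_n$ acts on $\cB_i$ by multiplication by $\zeta^i$.

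To identify $\cB_i$ with $\cL^{-i}(\lfloor \tfrac{i}{n}D\rfloor)$ I work locally. Cover $X$ by affines $U = \Spec A$ on which $\cL|_U$ admits a nowhere-zero section $u$, and write $s|_U = fu^n$ with $f \in A$, so that $D|_U = \dv(f)$. By Proposition~\ref{ic}(c), $\bar{\pi}^{-1}(U) = \Spec \bar{A}$, where $\bar{A}$ is the integral closure of $A$ in $Q(A)[T]/(T^n - f)$ and $T$ corresponds to the rational function $u^{-1}\sqrt[n]{s}$. The hypotheses of Lemma~\ref{rn} hold because $A \supseteq k$ contains $\mu_n$ and the reciprocal of the Vandermonde product $\prod_{i<j}(\zeta_j-\zeta_i) \in k^\times$, so
\[
\bar{A} = \bigoplus_{i=0}^{n-1} \{x \in Q(A) : x^n f^i \in A\} \cdot T^i,
\]
with $\zeta \in \mu_n$ acting on the degree-$i$ summand by $\zeta^i$. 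By Remark~\ref{par}, the coefficient module in degree $i$ equals $\Gamma(U, \cO_X(\lfloor \tfrac{i}{n}\dv(f)\rfloor)) = \Gamma(U, \cO_X(\lfloor \tfrac{i}{n}D\rfloor))$, and multiplication by $u^{-i}$ transports this canonically to $\Gamma(U, \cL^{-i}(\lfloor \tfrac{i}{n}D\rfloor))$.

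It remains to check that these local identifications patch. Under a change of trivialization $u' = cu$ with $c \in \Gamma(U, \cO_U^\times)$, one has $f' = c^{-n}f$, so $\dv(f') = \dv(f)$, while the scaling $u^{-i} \mapsto c^{-i}(u')^{-i}$ is precisely the transition cocycle defining $\cL^{-i}$. Hence the degree-$i$ local pieces glue to a global isomorphism $\cB_i \simeq \cL^{-i}(\lfloor \tfrac{i}{n}D\rfloor)$, yielding the claimed eigenspace decomposition. The main obstacle I anticipate is keeping the $\mu_n$-equivariance straight through these identifications, but this reduces to the observation that each $\cB_i$ is intrinsically characterized as the $\zeta^i$-eigensheaf of the Galois action on $\bar{\pi}_*\cO_{\bar{X}[\sqrt[n]{s}]}$, so the compatibility is forced.
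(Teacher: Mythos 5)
Your proof is correct and takes essentially the same route as the paper, which deduces the proposition precisely from Lemma~\ref{inc}/Lemma~\ref{rn} and Remark~\ref{par} after locally trivializing $\cL$ and writing $s=fu^n$; your extra details on the $\mu_n$-equivariance, the quotient property, and the gluing of the local identifications are exactly what that deduction implicitly uses. One aside is slightly inaccurate but harmless: $\bar{\pi}$ is not ramified exactly over $D$, only over the components of $D$ whose multiplicity is not divisible by $n$ (cf.\ Lemma~\ref{1.4}), which does not affect the statement being proved.
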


Call $\bar{\pi}$ the {\em cyclic cover obtained by taking the $n$-th root out of $s$}.
Note that $\bar{\pi}$ is flat if and only if the Weil divisor $\lfloor \frac{i}{n}D\rfloor$ is 
Cartier, for every $0<i<n$. If $X/k$ is smooth, Proposition~\ref{2.17} was proved in
~\cite[Corollary 3.11]{EVlect}.


\section{Normalized roots of rational functions}


Let $n$ be a positive integer, let $k$ be an algebraically closed field which 
contains $n$ distinct roots of unity.
We consider the category of normal $k$-varieties and dominant morphisms.
A morphism $f\colon X\to Y$ is called {\em dominant} if for every irreducible component
$X_i$ of $X$, $\overline{f(X_i)}$ is an irreducible component of $Y$. A composition of
dominant morphisms is again dominant. But the image of a dominant morphism may
not be dense (e.g. the open embedding of a connected component).

Let $X,Y$ be normal $k$-varieties. A dominant morphism $f\colon X\to Y$ induces 
a pullback homomorphism of rings $f^*\colon k(Y)\to k(X)$. It is compatible with
composition of dominant morphisms, and maps invertible rational functions to invertible 
rational functions. Note that $f^*$ may not be injective.

\begin{prop} Let $X/k$ be a normal variety, $\varphi$ an invertible
rational function on $X$.
Then there exists a normal variety $Y/k$, a dominant morphism $\pi\colon Y\to X$ 
and an invertible rational function $t$ on $Y$, such that $t^n=\pi^*\varphi$, 
and the following universal property holds: if $Y'/k$ is a normal variety,
$g\colon Y'\to X$ is a dominant morphism, and $t'$ is an invertible rational 
function on $Y'$ such that ${t'}^n=g^*\varphi$, then there exists a unique 
dominant morphism $u\colon Y'\to Y$ such that $g=\pi\circ u$ and $t'=u^*t$.
\end{prop}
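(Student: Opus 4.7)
The plan is to build $Y$ as the normalization of $X$ in the ring extension $k(X) \to k(X)[T]/(T^n-\varphi)$, and then verify the universal property component by component. Concretely, I would decompose $k(X) = \prod_i K_i$ into the function fields of the irreducible components $X_i$ of $X$, and write $\varphi = (\varphi_i)_i$ with each $\varphi_i \in K_i^\times$. Then
$$
k(X)[T]/(T^n-\varphi) \;=\; \prod_i K_i[T]/(T^n-\varphi_i),
$$
and Lemma~\ref{de} (applied with $A = K_i$ and $f = \varphi_i$, using that $k \subseteq K_i$ contains $\mu_n$ and hence a root of $T^4+4$) further splits each factor into a product of separable field extensions $F_{i,\alpha}$ of $K_i$. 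I would then define $Y$ as the disjoint union of the normalizations $Y_{i,\alpha}$ of $X_i$ in $F_{i,\alpha}$; each $Y_{i,\alpha}$ is a normal integral $k$-variety with function field $F_{i,\alpha}$, so $Y$ is a normal $k$-variety. The structure morphism $\pi\colon Y \to X$ sends $Y_{i,\alpha}$ dominantly onto $X_i$, hence is dominant in the paper's sense, and the class $t$ of $T$ in $k(Y) = \prod_{i,\alpha} F_{i,\alpha}$ is invertible (its $n$-th power equals the unit $\varphi$) and satisfies $t^n = \pi^*\varphi$ by construction.

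For the universal property, given a dominant $g\colon Y' \to X$ from a normal variety $Y'$ and $t' \in k(Y')^\times$ with ${t'}^n = g^*\varphi$, I would build $u\colon Y' \to Y$ one irreducible component at a time. Write $Y' = \sqcup_j Y'_j$, and for each $j$ let $i(j)$ denote the index of the component of $X$ that $Y'_j$ dominates, so $g$ induces a field embedding $g_j^*\colon K_{i(j)} \hookrightarrow L_j := k(Y'_j)$. The assignment $T \mapsto t'|_{Y'_j}$ defines a $K_{i(j)}$-algebra homomorphism
$$
K_{i(j)}[T]/(T^n-\varphi_{i(j)}) \;=\; \prod_\alpha F_{i(j),\alpha} \;\longrightarrow\; L_j,
$$
whose kernel is a prime ideal in a finite product of fields, hence the morphism factors through projection onto a unique factor $F_{i(j),\alpha(j)}$. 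The resulting $K_{i(j)}$-embedding $F_{i(j),\alpha(j)} \hookrightarrow L_j$, combined with normality of $\cO_{Y'_j}$ and the universal property of normalization of $X_{i(j)}$ in $F_{i(j),\alpha(j)}$, yields a unique dominant morphism $u_j\colon Y'_j \to Y_{i(j),\alpha(j)}$ with $\pi \circ u_j = g_j$ and $u_j^*(t|_{Y_{i(j),\alpha(j)}}) = t'|_{Y'_j}$. Assembling the $u_j$'s gives the desired dominant $u\colon Y' \to Y$, and its uniqueness follows from the uniqueness of each $u_j$ and of the factor $F_{i(j),\alpha(j)}$.

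The main obstacle I anticipate is not deep mathematics but careful bookkeeping: tracking how the irreducible components of $X$ and $Y$ correspond, verifying that the hypothesis that $t'$ be invertible really does force the algebra homomorphism out of $\prod_\alpha F_{i(j),\alpha}$ to land inside a single factor (so the construction is well-defined even when $X$ and $Y$ are reducible and $\pi$ is not surjective on components), and confirming that $u$ assembled from the $u_j$'s is dominant in the component-by-component sense used in the paper. Once these combinatorial points are under control, the rest is standard from the theory of normalization in a finite ring extension.
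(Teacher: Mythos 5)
Your construction and the paper's are built on the same core idea (normalization of $X$ with respect to $k(X)\to k(X)[T]/(T^n-\varphi)$), and your argument is correct, but the execution differs in how the universal property is verified and how the object is globalized. The paper first treats an irreducible affine chart $X=\Spec A$: it forms the integral closure $\bar A$ of $A$ in $Q(A)[T]/(T^n-\varphi)$, gets the universal property there by a direct ring-theoretic argument (the unique $Q(A)$-algebra map $Q(A)[T]/(T^n-\varphi)\to Q(Y')$ sending $T\mapsto t'$ carries $\bar A$ into $\Gamma(Y',\cO_{Y'})$ by normality), and then globalizes by exhibiting the explicit $\cO_X$-algebra $\cA(X,\varphi,n)=\oplus_{i=0}^{n-1}\cO_X(\lfloor \tfrac{i}{n}\dv(\varphi)\rfloor)$, identified chart by chart with $\bar A$ via Lemma~\ref{rn} and Remark~\ref{par}; this has the added payoff that the eigensheaf decomposition $\pi_*\cO_Y=\oplus_i\cO_X(\lfloor iD\rfloor)$ of Lemma~\ref{bnr} comes out of the proof for free. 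You instead work globally and componentwise: split $k(X)[T]/(T^n-\varphi)$ into fields via Lemma~\ref{de}, define $Y$ as the disjoint union of the normalizations of the $X_i$ in these fields, and check the universal property at the level of function fields, using the idempotent argument that a map from a finite product of fields to a field factors through exactly one factor, plus the standard universal property of normalization in a finite field extension. This is cleaner bookkeeping for the Proposition as stated (and matches the description of $Y$ given in the paper's introduction), at the cost of not directly producing the divisorial description of $\pi_*\cO_Y$, which the paper needs later and which your route would still have to extract from Lemma~\ref{rn} and Remark~\ref{par}.
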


\begin{proof}
{\em Step 1:} Suppose $X=\Spec A$ and $A$ is an integral domain. 
Then $\varphi \in Q(A)$. Let $\bar{A}$ be the integral closure of $A$ in the ring extension 
$$
A\to \frac{Q(A)[T]}{(T^n-\varphi)}.
$$ 
Then $A\to \bar{A}$ induces a finite morphism $\pi\colon Y\to X$. By Lemmas~\ref{de} 
and~\ref{rn}, $Q(A)[T]/(T^n-\varphi)$ is a product of fields $K_\zeta$, and $\bar{A}$ is 
the product of the normalization of $A$ in $K_\zeta$. Each factor is a 
integral domain, integrally closed in its function field. Therefore $Y$ is normal. The morphism
$\pi$ is dominant since it is finite. If we denote by $t\in k(Y)$ the class of $T$, then $t$
is invertible and $t^n=\pi^*\varphi$.
 
Let $Y'/k$ be a normal variety, $g\colon Y'\to X$ a dominant morphism, and $t'$ an 
invertible rational function on $Y'$ such that ${t'}^n=g^*\varphi$. There exists a unique homomorphism of $Q(A)$-algebras
$$
\frac{Q(A)[T]}{(T^n-\varphi)}\to Q(Y')
$$
which maps $T$ to $t'$. Since $A$ maps to $\Gamma(Y',\cO_{Y'})$,
$\bar{A}$ maps to the integral closure of $\Gamma(Y',\cO_{Y'})$ in $Q(Y')$.
Since $Y'$ is normal, $\Gamma(Y',\cO_{Y'})$ is integrally closed in $Q(Y')$.
We obtain a morphism $\bar{A}\to \Gamma(Y',\cO_{Y'})$. This translates into a 
dominant morphism $u\colon Y'\to Y$ with $g=\pi\circ u$ and $t'=u^*t$.

{\em Step 2:} Consider the $\cO_X$-algebra  
$$
\cA(X,\varphi,n)=\oplus_{i=0}^{n-1}\cO_X(\lfloor \frac{i}{n}\dv(\varphi)\rfloor),
$$
with the following multiplication: if $u_i,u_j$ are local sections of $\cA_i$ and $\cA_j$ respectively,
their product is the rational function $u_iu_j\in \cA_{i+j}$ if $i+j<n$, and $u_i u_j\varphi\in \cA_{i+j-n}$ if $i+j\ge n$. Let 
$\pi\colon Y=\Spec_X \cA \to X$ be the induced finite morphism of schemes.

Let $U=\Spec A\subset X$ be an affine irreducible open subset.
By Remark~\ref{par}, 
$$
\Gamma(U,\cA_i)=\{\psi\in Q(A);\psi^n\varphi^i\in A\}.
$$
By Lemma~\ref{rn}, the integral closure of $A$ in $Q(A)[T]/(T^n-\varphi)$ is
$$
\bar{A}= \oplus_{i=0}^{n-1}\{\psi\in Q(A); \psi^n\varphi^i\in A\}t^i.
$$
Therefore $\psi_i\mapsto \psi_i t^i$ induces an isomorphism of $A$-algebras
$$
\Gamma(U,\cA)\isoto \bar{A}.
$$
Therefore the construction of $\cA(X,\varphi,n)$ globalizes the local construction in 
Step 1. Since $X$ is covered by such $U$, we deduce that $Y$ is normal.

Let $V=X\setminus \Supp(\varphi)$ be the open dense subset of $X$ where $\varphi$
is a unit. Then $\Gamma(V,\cA_1)=\Gamma(V,\cO_V)$. Let $t'=1\in \Gamma(V,\cA_1)$.
Then ${t'}^n=\varphi$ in $\Gamma(V,\cA)$. So $t'$ becomes an invertible rational
function on $Y$ such that ${t'}^n=\pi^*\varphi$.
The universal property can be checked on irreducible affine open subsets of $X$, 
so it follows from Step 1.
\end{proof}

The morphism $\pi\colon Y\to X$, endowed with the invertible rational function 
$t\in k(Y)$, is unique up to an isomorphism over $X$. It is called the {\em 
normalized $n$-th root of $X$ with respect $\varphi$}. We denote 
$Y$ by $X[\varphi,n]$, and $t$ by $\sqrt[n]{\varphi}$. The $\Q$-Weil divisor 
$D=\frac{1}{n}\dv(\varphi)$ satisfies $nD\sim 0$. 

\begin{lem}\label{bnr} The following properties hold:
\begin{itemize}
\item[a)] $\pi_*\cO_{X[\varphi,n]}=\oplus_{i=0}^{n-1}\cO_X(\lfloor iD \rfloor)$.
\item[b)] The cyclic group $\Z/n\Z$ acts faithfully on $X[\varphi,n]$, $\pi$ 
is the induced quotient morphism, and a) is the decomposition into eigensheaves.
\item[c)] The morphism $\pi$ is finite. It is flat if and only if the Weil 
divisors $\lfloor D \rfloor,\ldots,\lfloor (n-1)D \rfloor$ are Cartier.
\item[d)] Let $\tau\colon X'\to X$ be an \'etale morphism. Then the normalized
$n$-th root of $X'$ with respect to $\tau^*\varphi$ is the pullback morphism 
$X[\varphi,n]\times_X X'\to X'$, endowed with the pullback invertible rational function.
\end{itemize}
\end{lem}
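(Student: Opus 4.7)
The plan is to read off each statement from the explicit construction of $Y = \Spec_X \cA(X,\varphi,n)$ carried out in the proof of the preceding proposition, reducing everything to two kinds of inputs: the definition of $\cA$ as an $\cO_X$-algebra, and base-change compatibilities for round-down and for the $n$-th root morphism.

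For (a), I would simply unwind definitions. By construction $\pi_*\cO_Y = \cA(X,\varphi,n) = \oplus_{i=0}^{n-1} \cO_X(\lfloor \tfrac{i}{n}\dv(\varphi)\rfloor)$, and since $D = \tfrac{1}{n}\dv(\varphi)$ we have $\lfloor \tfrac{i}{n}\dv(\varphi)\rfloor = \lfloor iD\rfloor$. For (b), fix a primitive $n$-th root of unity $\zeta \in k$, which exists by hypothesis on $k$. The pair $(\pi, \zeta\sqrt[n]{\varphi})$ satisfies the same universal property as $(\pi,\sqrt[n]{\varphi})$, hence yields a unique automorphism $\sigma_\zeta$ of $Y/X$ sending $\sqrt[n]{\varphi}\mapsto \zeta\sqrt[n]{\varphi}$; the assignment $\zeta \mapsto \sigma_\zeta$ gives a faithful action of $\mu_n \cong \Z/n\Z$ because $\sqrt[n]{\varphi}$ is a nonzero rational function on $Y$. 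On the local model $\bar A = \oplus_i \{q\in Q(A); q^n\varphi^i\in A\}\,t^i$ from the preceding proof, $\sigma_\zeta$ acts by $\psi t^i \mapsto \zeta^i \psi t^i$, which identifies the decomposition in (a) with the eigenspace decomposition under $\mu_n$; the invariants are $\cO_X$, so $\pi$ is the quotient morphism.

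For (c), finiteness was already established during the construction ($Y = \Spec_X \cA$ with $\cA$ finite over $\cO_X$, since each summand $\cO_X(\lfloor iD\rfloor)$ is coherent of generic rank one). A finite morphism is flat if and only if $\pi_*\cO_Y$ is a locally free $\cO_X$-module; given the direct sum decomposition from (a), this happens if and only if each reflexive rank-one sheaf $\cO_X(\lfloor iD\rfloor)$ is invertible, equivalently each Weil divisor $\lfloor iD\rfloor$ is Cartier. The cases $i=0$ and (by considering $\sigma_\zeta$-invariance of the sum, equivalently $\lfloor nD\rfloor = 0$) the non-triviality only for $0<i<n$ give the stated range.

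For (d), the cleanest route is via the universal property. Form the fibre product $Y' = X[\varphi,n]\times_X X'$ with projection $\pi'\colon Y'\to X'$; since $\tau$ is étale, so is $Y'\to Y$, hence $Y'$ is normal. The pullback of $\sqrt[n]{\varphi}$ is an invertible rational function on $Y'$ whose $n$-th power equals ${\pi'}^*\tau^*\varphi$. To recognise $Y'$ as the normalized $n$-th root of $X'$ with respect to $\tau^*\varphi$, one checks the universal property: given a dominant morphism $g\colon Z\to X'$ from a normal variety with a chosen $n$-th root of $g^*\tau^*\varphi$, applying the universal property of $X[\varphi,n]$ to $\tau\circ g$ gives a unique dominant factorisation $Z\to X[\varphi,n]$, and then the fibre-product property supplies the unique factorisation through $Y'$. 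Alternatively, one can match the $\cO_{X'}$-algebras directly: $\tau^*\pi_*\cO_Y = \oplus_i \tau^*\cO_X(\lfloor iD\rfloor) \isoto \oplus_i \cO_{X'}(\lfloor i\tau^*D\rfloor)$ by Lemma~\ref{eds}, and this is the algebra defining the normalized $n$-th root of $X'$ with respect to $\tau^*\varphi$.

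The only mildly delicate point is (c): one must observe that each $\cO_X(\lfloor iD\rfloor)$ is a reflexive rank-one sheaf, so being locally free is equivalent to being invertible, and being invertible is equivalent to the Weil divisor being Cartier. The remaining parts are all book-keeping with the explicit algebra $\cA(X,\varphi,n)$.
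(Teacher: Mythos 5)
Your proposal is correct and follows essentially the same route as the paper: parts a)--c) are read off from the explicit algebra $\cA(X,\varphi,n)=\oplus_{i=0}^{n-1}\cO_X(\lfloor iD\rfloor)$ with the $\mu_n$-action $u_i\mapsto \zeta^i u_i$, and part d) comes from the isomorphism $\tau^*\cA(X,\varphi,n)\isoto \cA(X',\tau^*\varphi,n)$ furnished by Lemma~\ref{eds}. Your extra observations (deriving the action from the universal property, and the reflexive-rank-one argument for the flatness criterion) are sound elaborations of details the paper leaves implicit.
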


\begin{proof} Properties a), b), c) follow from the local description of the normalized
$n$-th root. In b), let $\zeta\in k^*$ be a primitive $n$-th root of $1$. The action is 
$(\zeta,aT^i)\mapsto \zeta^i a T^i$ for the local model in Step 1, 
and $(\zeta,u_i)\mapsto \zeta^iu_i$ for the global model in Step 2.
For d), Lemma~\ref{eds} gives $\tau^*\cA(X,\varphi,n)\isoto 
\cA(X',\tau^*\varphi,n)$. Therefore the normalized $n$-th root commutes
with \'etale base change.
\end{proof}

\begin{lem}\label{invun}
Let $\varphi'$ be another invertible rational function on $X$. Then 
$X[\varphi,n]$ is naturally isomorphic to $X[{\varphi'}^n\varphi,n]$ over $X$.
\end{lem}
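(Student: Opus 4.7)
The plan is to prove the isomorphism by invoking the universal property of the normalized $n$-th root in both directions. The key observation is that the invertible rational function $\varphi'$ pulls back to an invertible rational function on any normal variety that admits a dominant morphism to $X$ (because $\pi^*$ preserves invertible rational functions), and so we can freely multiply and divide by such pullbacks.

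First, I would denote by $\pi\colon Y=X[\varphi,n]\to X$ the normalized root, with $\psi\in k(Y)$ satisfying $\psi^n=\pi^*\varphi$. Set $t=\pi^*(\varphi')\cdot \psi\in k(Y)$, which is invertible since both factors are, and compute
$$
t^n=(\pi^*\varphi')^n\cdot \psi^n=\pi^*({\varphi'}^n\varphi).
$$
By the universal property of $X[{\varphi'}^n\varphi,n]$, the pair $(\pi,t)$ induces a unique dominant $X$-morphism
$$
u\colon X[\varphi,n]\to X[{\varphi'}^n\varphi,n]
$$
pulling back the tautological root $\sqrt[n]{{\varphi'}^n\varphi}$ to $t$.

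Symmetrically, let $\pi'\colon Y'=X[{\varphi'}^n\varphi,n]\to X$ with tautological invertible rational function $\psi'$ satisfying ${\psi'}^n=\pi'^*({\varphi'}^n\varphi)$. Set $t'=\psi'/\pi'^*(\varphi')\in k(Y')$, which is invertible since $\pi'^*\varphi'$ is a unit in $k(Y')$. Then
$$
{t'}^n={\psi'}^n/\pi'^*({\varphi'}^n)=\pi'^*\varphi.
$$
By the universal property of $X[\varphi,n]$, the pair $(\pi',t')$ induces a unique dominant $X$-morphism $u'\colon X[{\varphi'}^n\varphi,n]\to X[\varphi,n]$ pulling back $\sqrt[n]{\varphi}$ to $t'$.

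Finally, I would check that $u'\circ u$ and $u\circ u'$ are the identity, by the uniqueness clause in each universal property: both $u'\circ u$ and $\id_{X[\varphi,n]}$ are dominant $X$-morphisms whose pullback of $\sqrt[n]{\varphi}$ equals $\sqrt[n]{\varphi}$, because by construction $u^*\sqrt[n]{{\varphi'}^n\varphi}=t$ and $(u')^*\sqrt[n]{\varphi}=t'=\psi'/\pi'^*\varphi'$, so $(u'\circ u)^*\sqrt[n]{\varphi}=u^*(\sqrt[n]{{\varphi'}^n\varphi}/\pi'^*\varphi')=t/\pi^*\varphi'=\psi$, and similarly for the other composition. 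No step is a real obstacle here; the only mild care is to check that $\pi^*\varphi'$ and $\pi'^*\varphi'$ are genuinely invertible in the rings of rational functions on $Y$ and $Y'$ respectively, which follows from dominance of $\pi,\pi'$ together with the fact that pullback along a dominant morphism sends invertible rational functions to invertible rational functions, as noted at the start of the section.
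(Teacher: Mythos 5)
Your proof is correct, but it takes a different route from the paper. The paper argues directly on the explicit construction: using the identity $\cO_X(\dv(\varphi')+D)={\varphi'}^{-1}\cO_X(D)$, it checks that $s_i\mapsto {\varphi'}^{i}s_i$ defines an isomorphism of $\cO_X$-algebras $\cA(X,{\varphi'}^n\varphi,n)\isoto \cA(X,\varphi,n)$, hence an isomorphism of the normalized roots over $X$. You instead never open the construction and argue purely formally from the universal property: the pair $(\pi,\ \pi^*\varphi'\cdot\psi)$ gives a map $u\colon X[\varphi,n]\to X[{\varphi'}^n\varphi,n]$, the pair $(\pi',\ \psi'/\pi'^*\varphi')$ gives a map $u'$ back, and the uniqueness clause (applied to $(\pi,\psi)$ and $(\pi',\psi')$ themselves) forces both composites to be the identity; the only point needing care, that $\pi^*\varphi'$ and $\pi'^*\varphi'$ are invertible in the rings of rational functions, is covered by the paper's remark that pullback along dominant morphisms preserves invertible rational functions. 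Your argument is shorter and works for any object satisfying the universal property; the paper's explicit algebra isomorphism buys slightly more, namely the degree-wise description (multiplication by ${\varphi'}^{i}$ in degree $i$), which makes the compatibility with the eigensheaf decomposition $\pi_*\cO_Y=\oplus_i\cO_X(\lfloor iD\rfloor)$ and with the $\mu_n$-action transparent for later use. Your identification $(u'\circ u)^*\psi=\psi$ implicitly uses $u^*\circ\pi'^*=\pi^*$, which holds since $\pi'\circ u=\pi$ and pullback is compatible with composition of dominant morphisms, as the paper notes; it would be worth saying this explicitly.
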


\begin{proof} For an invertible $\varphi\in k(X)$ and a Weil divisor $D$ on $X$, the following
formula holds: 
$$
\cO_X((\varphi)+D)=\varphi^{-1}\cO_X(D).
$$
Therefore the application $s_i\mapsto {\varphi'}^is_i$ induces an isomorphism of 
$\cO_X$-algebras
$$
\cA(X,{\varphi'}^n\varphi,n)\isoto \cA(X,\varphi,n).
$$
\end{proof}

Suppose $X$ is irreducible. Let $1\le d\mid n$ be the maximal divisor such that 
$\varphi=\psi^d$ for some $\psi\in k(X)^*$. Then $X[\varphi,n]$ has
exactly $d$ irreducible (connected) components
$$
X[\varphi,n]=\sqcup_{\zeta\in \mu_d} X[\zeta\psi,n/d].
$$ 
Each component is isomorphic over $X$ to $X[\psi,n/d]$.


\subsection{Structure in codimension one}

At the generic point of each prime divisor on $X$, the normalized root is explicitly
described by the following lemma. We use the convention $\gcd(n,0)=n$.

\begin{lem}\label{exp}
Suppose $\varphi=uf^m$, where $u\in \Gamma(X,\cO_X^*)$, $f\in \Gamma(X,\cO_X)$ 
is a non-zero divisor such that the divisor $\dv(f)$ is reduced, and $m\in \Z$. 
Let $g=\gcd(n,m)$. Let $n=gn'$, and $1\le j\le n'$ with $jm\equiv g \pmod n$. Then 
there exists an isomorphism of $\cO_X$-algebras
$$ 
\pi_*\cO_Y\simeq \cO_X[T_1,T_2]/(T_1^g-u,T_2^{n'}-fT_1^j).
$$
That is, $\pi$ is isomorphic to the composition of the $g$-th root of the unit $u$, followed
by the $n'$-th root of the regular function $\sqrt[g]{u}^j f$. The above formula simplifies to 
$\pi_*\cO_Y\simeq \cO_X[T]/(T^n-u^jf)$ if $g=1$,
and to 
$\pi_*\cO_Y\simeq \cO_X[T]/(T^n-u)$ if $g=n$.
\end{lem}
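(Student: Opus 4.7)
The plan is to factor the normalized $n$-th root of $\varphi=uf^m$ as a tower of two simpler normalized roots — a $g$-th root of a unit followed by an $n'$-th root of a function with reduced zero divisor — and invoke Lemma~\ref{bnr}(a) at each stage. First, since $u$ is a unit we have $\dv(u)=0$, so the normalized $g$-th root $\pi_1\colon Y_1=X[u,g]\to X$ is \'etale and Lemma~\ref{bnr}(a) gives
$$
\pi_{1*}\mathcal{O}_{Y_1}=\mathcal{O}_X[T_1]/(T_1^g-u),
$$
with $T_1=\sqrt[g]{u}$ (every $\lfloor i\cdot 0\rfloor$ vanishes).

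Next, on $Y_1$ consider the regular function $g_1=T_1^jf$. Since $T_1$ is a unit and $\pi_1$ is \'etale, the divisor $\dv(g_1)=\pi_1^*\dv(f)$ is again a reduced Cartier divisor. The normalized $n'$-th root $\pi_2\colon Y_2=Y_1[g_1,n']\to Y_1$ therefore satisfies, by Lemma~\ref{bnr}(a) applied to $\tfrac{1}{n'}\dv(g_1)$ (all floors $\lfloor \tfrac{i}{n'}\pi_1^*\dv(f)\rfloor$ vanish for $0\le i<n'$),
$$
\pi_{2*}\mathcal{O}_{Y_2}=\mathcal{O}_{Y_1}[T_2]/(T_2^{n'}-g_1),
$$
so that pushing forward to $X$ produces exactly the algebra appearing on the right-hand side of the lemma.

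It remains to identify $\pi_1\circ\pi_2\colon Y_2\to X$ with the normalized $n$-th root $\pi\colon Y=X[\varphi,n]\to X$. Set $l=m/g$ and $k=(jm-g)/n$; both are integers since $g\mid m$ and $jm\equiv g\pmod n$, and they satisfy the single identity $jl-kn'=1$. Define $\psi=T_1^{-k}T_2^l\in k(Y_2)^\times$; using $T_1^g=u$ and $T_2^{n'}=T_1^jf$ a direct computation yields
$$
\psi^n=T_1^{-kn+jm}\,f^{lg}=T_1^g\,f^m=uf^m=\varphi.
$$
To show $(Y_2,\psi)$ satisfies the universal property of $X[\varphi,n]$, I would take any normal variety $Z$ with a dominant $Z\to X$ and an invertible $w\in k(Z)^\times$ with $w^n=\varphi$, and set $s=w^{n'}f^{-l}$, $w'=w^jf^{-k}$. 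The relation $jl-kn'=1$ makes $s^g=u$ and ${w'}^{n'}=s^jf$ fall out, so the successive universal properties of $Y_1$ and $Y_2$ assemble a unique dominant morphism $Z\to Y_2$, under which $\psi$ pulls back to $s^{-k}{w'}^l=w^{jl-kn'}=w$. Uniqueness of the normalized root then gives $(Y_2,\psi)\cong(Y,\sqrt[n]{\varphi})$ over $X$, producing the desired isomorphism of $\mathcal{O}_X$-algebras.

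The main obstacle is the exponent bookkeeping: one must locate the single arithmetic identity $jl-kn'=1$ (equivalent to $jm\equiv g\pmod n$ after dividing by $g$) that simultaneously ensures $\psi^n=\varphi$ and $h^*\psi=w$ in the universal-property check; once this identity is isolated the rest is formal. The two stated simplifications then follow by direct substitution — the case $g=n$ forces $n\mid m$ so $f^m$ is an $n$-th power and Lemma~\ref{invun} collapses the presentation to $\mathcal{O}_X[T]/(T^n-u)$, while for $g=1$ the relation $T_1^1=u$ eliminates $T_1$ and leaves $\mathcal{O}_X[T]/(T^n-u^jf)$.
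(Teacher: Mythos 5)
Your proposal is correct, but it proves the lemma by a genuinely different route than the paper. The paper argues entirely inside the eigenspace decomposition of Lemma~\ref{bnr}.a): since $\dv(f)$ is reduced, $\pi_*\cO_Y=\oplus_{i=0}^{n-1}\cO_X f^{-\lfloor im/n\rfloor}t^i$ with $t^n=uf^m$, and it defines the map $T_1\mapsto f^{-m'}t^{n'}$, $T_2\mapsto f^{-\lfloor jm/n\rfloor}t^j$, checking surjectivity and injectivity by an explicit arithmetic bookkeeping (writing each $i$ uniquely as $\alpha j+n'\beta+n\gamma$ with $0\le\alpha<n'$, $0\le\beta<g$). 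You instead factor the cover structurally: the \'etale $g$-th root of the unit $u$, followed by the $n'$-th root of $T_1^jf$, whose divisor is reduced so that no rounding occurs, and then you identify the composite with $X[\varphi,n]$ via the universal property, with the B\'ezout identity $jl-kn'=1$ (equivalent to $jm\equiv g\pmod n$) doing all the exponent work. Your approach is cleaner and directly justifies the sentence ``$\pi$ is the composition of the $g$-th root of $u$ and the $n'$-th root of $\sqrt[g]{u}^jf$,'' which the paper only asserts; the paper's computation, on the other hand, produces the explicit generators $t_1,t_2$ in terms of $t$ and facts such as $\{jm/n\}=1/n'$, which are reused later in the proofs of Lemmas~\ref{1.4} and~\ref{dvc}. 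One small point to make explicit in your universal-property check: uniqueness of the induced morphism $Z\to Y_2$ follows because on $Y_2$ one has $T_1=\psi^{n'}f^{-l}$ and $T_2=\psi^{j}f^{-k}$, so any $X$-morphism pulling $\psi$ back to $w$ necessarily pulls $T_1,T_2$ back to your $s,w'$, and then the uniqueness clauses for the two intermediate roots apply; with that sentence added the argument is complete.
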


\begin{proof} Since $\dv(f)$ is reduced, we have $\lfloor iD \rfloor=\lfloor \frac{im}{n}\rfloor\dv(f)$. Therefore
$$
\cA=\oplus_{i=0}^{n-1} \cO_X f^{-\lfloor \frac{im}{n}\rfloor } t^i \ (t^n=uf^m).
$$
Let $m=gm'$. Given $0\le i<n$, there are uniquely defined integers $0\le \alpha<n',0\le \beta<g$ such that
$i \equiv \alpha j \pmod{n'}$ and $\frac{i-\alpha j}{n'} \equiv \beta \pmod g$.
In particular, $\{\frac{jm}{n} \}=\frac{1}{n'}$ and $\{\frac{jm\alpha}{n} \}=\frac{\alpha}{n'}$.
Let $\gamma\in \Z$ with $i-\alpha j-n'\beta=n\gamma$. We obtain
$
m\gamma  + \lfloor \frac{jm}{n}\rfloor \alpha+m' \beta = \lfloor \frac{im}{n}\rfloor.
$
Therefore the following holds in $\cA$:
$$
u^\gamma (f^{-\lfloor \frac{jm}{n}\rfloor} t^j)^\alpha (f^{-m'} t^{n'})^\beta=f^{-\lfloor \frac{im}{n} \rfloor} t^i.
$$
It follows that the homomorphism
$$
\cO_X[T_1,T_2]/(T_1^g-u,T_2^{n'}-fT_1^j ) \to \cA, \ 
T_1\mapsto f^{-m'} t^{n'}, T_2\mapsto  f^{-\lfloor \frac{jm}{n}\rfloor} t^j
$$
is well defined and surjective. It is injective by
the uniqueness of $\alpha,\beta$. The simplified forms
of the formula are clear.
\end{proof}

\begin{lem}\label{1.4} The ramification index of $\pi$ over a prime divisor $E$ of $X$ is 
$$
\frac{n}{\gcd(n,\ord_E(\varphi))}.
$$ 
In particular, $\pi$ ramifies exactly over the prime divisors of $\Supp\{D\}$. Moreover,
$$
\pi^*D=\sum_{E'\subset Y}\frac{\ord_{\pi(E')}(\varphi)}{\gcd(n,\ord_{\pi(E')}(\varphi))}E'.
$$
\end{lem}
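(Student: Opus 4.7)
The plan is to reduce everything to the explicit local description provided by Lemma~\ref{exp}, and then read off both the ramification index and the coefficients of $\pi^*D$ from that presentation. Since ramification indices and orders of vanishing at a prime $E\subset X$ depend only on the discrete valuation ring $\cO_{X,E}$, and the normalized root commutes with Zariski (in fact étale) base change by Lemma~\ref{bnr}.d), I may shrink $X$ to a neighborhood of the generic point of $E$. Setting $m=\ord_E(\varphi)$, I pick a regular function $f$ with $\dv(f)=E$ on such a neighborhood; then $u:=\varphi/f^m$ has $\ord_E(u)=0$, so after further shrinking (to avoid the other components of $\dv(u)$ and of $\dv(f)$) I can arrange that $u\in\Gamma(X,\cO_X^\times)$ and $\dv(f)=E$ is reduced. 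This puts us in the hypotheses of Lemma~\ref{exp}.

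Lemma~\ref{exp} now factors $\pi$ as the $g$-th root of $u$ (with $g=\gcd(n,m)$ and $n=gn'$) followed by the $n'$-th root of $\sqrt[g]{u}^{\,j}f$. The first step is étale along $E$: the equation $T_1^g=u$ has unit constant term and $\Char k\nmid g$. On the resulting cover $\sqrt[g]{u}^{\,j}$ is still a unit, so $\sqrt[g]{u}^{\,j}f$ is a uniformizer at every prime above $E$, and the second step is totally ramified of index $n'$. Hence every prime $E'\subset Y$ lying over $E$ has
$$
e_{E'/E}=n'=\frac{n}{\gcd(n,\ord_E(\varphi))}.
$$
This equals $1$ exactly when $n\mid \ord_E(\varphi)$, that is when $\{D\}$ has coefficient $0$ at $E$, proving also that $\pi$ ramifies precisely over the primes in $\Supp\{D\}$.

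For the pullback formula, I use that $t^n=\pi^*\varphi$: this gives $n\dv(t)=\dv(\pi^*\varphi)=\pi^*\dv(\varphi)=n\,\pi^*D$, hence $\pi^*D=\dv(t)=\sum_{E'\subset Y}\ord_{E'}(t)\,E'$. For each prime $E'\subset Y$, setting $E=\pi(E')$ and using the standard transformation of valuations under the finite extension $\cO_{X,E}\subset \cO_{Y,E'}$ of DVRs,
$$
n\,\ord_{E'}(t)=\ord_{E'}(\pi^*\varphi)=e_{E'/E}\,\ord_E(\varphi)=\frac{n\,\ord_E(\varphi)}{\gcd(n,\ord_E(\varphi))},
$$
so $\ord_{E'}(t)=\ord_{\pi(E')}(\varphi)/\gcd(n,\ord_{\pi(E')}(\varphi))$, which is the stated formula.

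The only substantive step is the initial reduction to the form $\varphi=uf^m$ with $f$ cutting out $E$ reducibly and $u$ a global unit; once Lemma~\ref{exp} applies, both assertions fall out of the two-step factorization, an étale piece of degree $g$ followed by a totally ramified piece of degree $n'$. No further ingredients are needed.
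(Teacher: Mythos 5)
Your proof is correct and follows essentially the same route as the paper: localize at the generic point of $E$ so that $\varphi=uf^m$ with $u$ a unit and $f$ a parameter for $E$, invoke Lemma~\ref{exp} to factor $\pi$ as an \'etale $g$-th root of $u$ followed by the $n'$-th root of the uniformizer $\sqrt[g]{u}^{\,j}f$, and read off the ramification index $n'$. Your explicit derivation of the pullback formula from $t^n=\pi^*\varphi$ is exactly the computation the paper leaves implicit with ``the pullback formula follows.''
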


\begin{proof} Let $E$ be a prime divisor on $X$. Let $m=\ord_E(\varphi)$. Let $U$ be an 
open subset such that $U\cap E\ne \emptyset$ is nonsingular, cut out by a local 
parameter $f\in \Gamma(U,\cO_X)$. By shrinking $U$, we may suppose $\varphi=uf^m$, 
for some $u\in \Gamma(U,\cO_X^\times)$.  
Let $g=\gcd(n,m)$, $n=gn'$. By Lemma~\ref{exp}, $\pi^{-1}(U)\to U$ is the composition of
the \'etale cover $U[\sqrt[g]{u}]\to U$, followed by the $n'$-th root of the regular function
$\sqrt[g]{u}^jf$, which is a local parameter at each prime of $U[\sqrt[g]{u}]$ over $E$.
Therefore the ramification index over $E$ is $n'$. The pullback formula follows. 
Note that $n'\ne 1$ if and only if $\mult_E D\notin \Z$.
\end{proof}

\begin{lem}\label{dvc} Let $\Sigma$ be a reduced Weil divisor on $X$ which contains 
$\Supp \{D\}$. Let $\Sigma_Y=\pi^{-1}\Sigma$ be the preimage reduced
Weil divisor. We have eigenspace decompositions:
\begin{itemize}
\item[a)] $\pi_*\tilde{\Omega}^p_{Y/k}(\log \Sigma_Y)=\oplus_{i=0}^{n-1}\tilde{\Omega}^p_{X/k}(\log \Sigma)
(\lfloor iD\rfloor)$.
\item[b)] $\pi_*\tilde{\Omega}^p_{Y/k}=\oplus_{i=0}^{n-1}\tilde{\Omega}^p_{X/k}(\log \Supp \{iD\})(\lfloor iD\rfloor)$.
\item[c)] $\pi_*\cT_{Y/k}=\oplus_{i=0}^{n-1}\tilde{\cT}_{X/k}(-\log \sum_E \epsilon(d_E,i)E)(\lfloor iD\rfloor)$,
where for a rational number $d$ we define $r(d)=\min\{r\ge 1;rd\in \Z\}$, and 
$\epsilon(d,i)$ to be $1$ if $d\notin \Z$ and $id+\frac{1}{r(d)}\notin \Z$,
and zero otherwise. In particular, the invariant part of $\pi_*\cT_{Y/k}$ is $\tilde{\cT}_{X/k}(-\log \Supp\{D\})$.
\item[d)] $\pi_*\cT_{Y/k}(-\log \Sigma_Y)=\oplus_{i=0}^{n-1}\tilde{\cT}_{X/k}(-\log \Sigma)(\lfloor iD\rfloor)$.
\end{itemize}
\end{lem}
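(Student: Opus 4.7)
The plan is to check each of the four formulas at every codimension-one point of $X$, exploiting the Galois $\mu_n$-action on $\pi_*(\cdot)$. Both sides of every identity are coherent reflexive $\cO_X$-modules: the right-hand sides because each summand is reflexive, and the left-hand sides because $\pi$ is a finite morphism between normal varieties, so the pushforward of an $S_2$-sheaf remains $S_2$. Hence it suffices to verify each identity after localization at the generic point of every prime divisor $E\subset X$. Since $\Char k\nmid n$, the Galois group $\mu_n$ splits $\pi_*(\cdot)$ into $n$ eigensheaves indexed by $i\in\{0,\dots,n-1\}$, matching the display on the right. The $i$-th eigensheaf on the LHS consists of rational forms (resp.\ derivations) of the shape $\psi^i\omega$ (resp.\ $\psi^i\theta$) with $\psi=\sqrt[n]{\varphi}$ and $\omega$ (resp.\ $\theta$) a rational object on $X$.

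Fix a prime $E\subset X$ and set $m=\ord_E\varphi$, $g=\gcd(n,m)$, $n=gn'$, $m'=m/g$. By Lemma~\ref{exp}, \'etale-locally at the generic point of $E$ the morphism $\pi$ factors as a finite \'etale $\mu_g$-cover followed by the pure Kummer extension $t^{n'}=f$ with $f$ a local parameter for $E$. By Lemma~\ref{eBC} all sheaves in the statement commute with \'etale base change, so we may reduce to this Kummer local model, which is an instance of the toric situation of Lemma~\ref{4.1}. In that model, $\ord_{E'}(\psi)=m'$ (i.e.\ $\psi=\mathrm{unit}\cdot t^{m'}$) and $\mult_E D=m'/n'$, giving the identity $\lfloor im'/n'\rfloor=\lfloor iD\rfloor$ which produces the twist $\cO_X(\lfloor iD\rfloor)$ appearing on the right.

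Parts (a), (b), and (d) then follow by matching the $i$-th eigenspace to Lemma~\ref{4.1}(b), (a), and (d) respectively. A rational form $\omega$ with pole of pure order $k$ along $E$ yields $\psi^i\omega$ with order $kn'-im'$ along $E'$, which is $\le 0$ exactly when $k\le\lfloor iD\rfloor$. A logarithmic component $(df/f)\wedge\omega''$ pulls back to $n'(dt/t)\wedge\omega''$, which after multiplication by $\psi^i$ is logarithmic (resp.\ regular) at $E'$ when $im'\ge 0$ (resp.\ $im'\ge 1$). Combining these two observations proves (a) by allowing log poles along $\Sigma$ throughout, and proves (b) by forcing a log pole at $E$ exactly when $iD\notin\Z$, i.e., when $E\in\Supp\{iD\}$. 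Part (d) is the identical computation for logarithmic derivations via Lemma~\ref{4.1}(d).

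The main obstacle is part (c), the pushforward of $\cT_Y$ without logarithmic structure. Lemma~\ref{4.1}(c) shows that regular derivations on the Kummer model contain, beyond the standard $\langle m,e\rangle\ge 0$ block, an exceptional simple-pole block at $\langle m,e\rangle=-1$ spanned by $\chi^m D_e$, whose coefficient pole is absorbed by the Euler derivation $D_e$. Tracking this exceptional block through the $\mu_n$-eigenspace decomposition produces exactly the indicator $\epsilon(d_E,i)$: on unramified primes $d_E\in\Z$ it vanishes, while on ramified primes it equals $1$ except at those residues $i\bmod n$ for which $id_E+1/n'\in\Z$, the residues at which $\psi^i\theta$ lands in the exceptional block and is regular at $E'$ without $\theta$ needing to be tangent to $E$. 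The ``in particular'' clause is the case $i=0$: then $0\cdot d_E+1/n'\in\Z$ iff $n'=1$ iff $d_E\in\Z$, so the invariant summand acquires a log pole precisely along $\Supp\{D\}$, as asserted.
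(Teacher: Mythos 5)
Your overall route is the paper's own: both sides are $S_2$ sheaves (finite pushforward from a normal variety), so the identities may be tested at the generic points of prime divisors $E\subset X$; the $\mu_n$-action gives the eigensheaf decomposition, with $i$-th piece consisting of $\psi^i\pi^*\omega$ (resp.\ $\psi^i\tilde\theta$); and the local structure over $E$ is taken from Lemma~\ref{exp}. For a), b), d) your order computations ($\ord_{E'}\psi=m'$, $\ord_{E'}f=n'$, the $df/f$-component gaining one order of regularity) are exactly the paper's, so these parts are fine modulo compression, with two caveats worth stating. First, Lemma~\ref{4.1} does not literally apply: near $E$ the base $X$ is an arbitrary normal variety, not a toric one, so what you actually use (and what suffices) is the direct valuation computation in the extension of discrete valuation rings $\cO_{X,E}\subset\cO_{Y,E'}$, after writing $\omega$ as $\frac{df}{f}\wedge\omega'+\omega''$ with $\omega',\omega''$ free of $df$; the toric lemma is only an analogy. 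Second, Lemma~\ref{exp} gives $t_2^{n'}=f\,t_1^{\,j}$ with $t_1$ a unit, so reaching your ``pure'' model $t^{n'}=f$ needs one more \'etale base change absorbing the unit (Remark~\ref{unc}, Lemma~\ref{invun}, Lemma~\ref{bnr}.d)); harmless, but it should be said.

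The genuine thin spot is c). You identify the correct mechanism (a derivation with a simple pole along $E$ lifts to a multiple of the Euler field $t_2\partial_{t_2}$, which absorbs one order of pole) and the correct arithmetic cutoff $id_E+1/r(d_E)\in\Z$, but the equality of sheaves is asserted rather than proved. What must be shown, for each eigenindex $l$, is that $f^{-\lfloor lm/n\rfloor}t^{l}\tilde\theta$ is regular at $E'$ if and only if $\theta$ is regular at $E$ and, when $m/n\notin\Z$ and $\{lm/n\}\ne 1-\frac{1}{n'}$, also logarithmic at $E$. The ``only if'' direction does follow quickly by applying the twisted derivation to elements of $\cO_{X,E}$ and to the parameter $t_2$; but the ``if'' direction requires verifying stability of the whole local ring $\cO_{Y,E'}$ --- the paper tests every eigencomponent $\cO_X(\lfloor iD\rfloor)\psi^i$, using that $\lfloor \frac{im}{n}\rfloor+\lfloor\frac{lm}{n}\rfloor-\lfloor\frac{(i+l)m}{n}\rfloor\in\{-1,0\}$ together with the special index $j$ satisfying $\{\frac{jm}{n}\}=\frac{1}{n'}$ (alternatively one can test only the generators $\cO_{X,E}$, $t_1$, $t_2$ and invoke Leibniz, but some such check is unavoidable). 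None of this bookkeeping appears in your text, and it is precisely where the cutoff $\{lm/n\}=1-\frac{1}{n'}$ is produced rather than guessed; as written, c) is a statement of the answer supported by a heuristic, not a proof.
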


\begin{proof} Let $V=X\setminus (\Sing X\cup \Supp\{D\})$, an open dense subset of $X$.
Then $\pi$ is \'etale over $V$. In particular, $\pi^{-1}(V)$ is also nonsingular, and 
$\pi^*\Omega_{V/k}\simeq \Omega_{\pi^{-1}(V)/k}$. Therefore $\pi^*\Omega^p_{V/k}\simeq 
\Omega^p_{\pi^{-1}(V)/k}$, and the projection formula gives 
$$
\pi_*\Omega^p_{\pi^{-1}(V)/k}=\oplus_{i=0}^{n-1}\Omega^p_{V/k}(iD|_V) t^i.
$$
This describes the sheaves in a) and b) at the generic points of $X$. These sheaves are $S_2$,
so we may determine them locally near a fixed prime divisor on $X$. Let $E$ be a prime divisor on $X$. 
We may shrink $X$ and suppose $\varphi=uf^m$, with $u$ a unit and $f$ a parameter for $E$.
Let $E'$ be a prime divisor over $E$. Then $t_2$ is a local parameter at $E'$, and $t_1$ is a unit at $E'$
(in the notations of Lemma~\ref{exp}). Recall that $t$ is the $n$-th root of $\varphi$.
 
Let $\omega$ be a rational $p$-form on $X$. There exists a unique integer $a$ such that 
$$
f^a\omega=\frac{df}{f}\wedge \omega^{p-1}+\omega^p,
$$ 
with $\omega^{p-1},\omega^p$ rational forms which are regular at $E$, and $\omega^{p-1}|_E\ne 0$. 
From $f=t_1^{_j}t_2^{n'}$, we obtain
$$
\frac{df}{f}=n'\frac{dt_2}{t_2}-j\frac{dt_1}{t_1}.
$$
Therefore $(f^at^{-i}) (\omega t^i)-n'\frac{dt_2}{t_2}\wedge \omega^{p-1}$ is regular at $E'$, and
$\omega^{p-1}|_{E'}\ne 0$. Since $(f^at^{-i})^n$ and $t_2^{n'(na-mi)}$ differ by a unit at $E'$,
the order of $f^at^{-i}$ at $E'$ if $n'(a-\frac{mi}{n})$. Therefore 
$\omega t^i$ has at most a logarithmic pole at $E'$ if and only if $n'(a-\frac{mi}{n})\le 0$,
if and only if $a\le \lfloor \frac{im}{n}\rfloor$, if and only if $f^{\lfloor\frac{im}{n}\rfloor} \omega$
has at most a logarithmic pole at $E$. This proves a). Similarly, 
$\omega t^i$ is regular at $E'$ if and only if $n'(a-\frac{mi}{n})<0$,
if and only if $a<\frac{im}{n}$. That is $a\le \lfloor \frac{im}{n}\rfloor$ if $\frac{im}{n}\notin \Z$,
and $a<\frac{im}{n}$ if $\frac{im}{n}\in \Z$. That is $f^{\lfloor\frac{im}{n}\rfloor} \omega$ has at most
a logarithmic pole at $E$ if $\frac{im}{n}\notin \Z$, and is regular at $E$ if $\frac{im}{n}\in \Z$. This
proves b).

Since $\pi$ is \'etale over $V$, every $k$-derivation $\theta$ of $V$ lifts to 
a unique $k$-derivation $\tilde{\theta}$ of $\pi^{-1}(V)$. We have an eigenspace decomposition
$$
\pi_*\cT_{\pi^{-1}(V)/k}=\oplus_{i=0}^{n-1}\{\tilde{\theta};\theta\in \cT_{V/k}(iD|_V)\}  t^i.
$$
This determines the sheaves in c) and d) at the generic points of $X$. The sheaves are $S_2$,
so we may localize near the generic point of a prime divisor $E$ of $X$. We use the same notations
as above. Let $A\to \bar{A}$ be the integral extension obtain by localizing $\pi$ at $E$.
We compute
$$
\bar{A}=\oplus_{i=0}^{n-1}Af^{-\lfloor \frac{im}{n} \rfloor}t^i.
$$
Let $\theta$ be a rational $k$-derivation of $A$. From $t^n=\varphi=uf^m$, we obtain 
$$
\tilde{\theta}(af^{-\lfloor \frac{im}{n} \rfloor}t^i)=(\theta(a)+a \frac{i}{n} \frac{\theta(u)}{u}+a\{\frac{im}{n}\}
\frac{\theta(f)}{f}) f^{-\lfloor \frac{im}{n} \rfloor}t^i \ (a\in A).
$$

c) Let $0\le l\le n-1$. We claim that $\tilde{\theta}f^{-\lfloor \frac{lm}{n} \rfloor}t^l$ is a regular at $E'$ if 
and only if $\theta$ is regular at $E$, and moreover logarithmic at $E$ in case $\frac{m}{n}\notin \Z$ 
and $\{ \frac{lm}{n} \}\ne 1-\frac{1}{n'}$.

Indeed, the rational derivation $\tilde{\theta}f^{-\lfloor \frac{lm}{n} \rfloor}t^l$ is 
regular on $\bar{A}$ if and only if for every $a\in A$ and $0\le i\le n-1$, 
$$
\tilde{\theta}(af^{-\lfloor \frac{im}{n} \rfloor}t^i)f^{-\lfloor \frac{lm}{n} \rfloor}t^l
\subseteq A  f^{-\lfloor \frac{(i+l\mod n)m}{n} \rfloor}t^{i+l\mod n}.
$$
Since $f^{-\lfloor \frac{i(i+l\mod n)m}{n} \rfloor}t^{i+l\mod n}$ differs by 
$f^{-\lfloor \frac{(i+l)m}{n} \rfloor}t^{i+l}$ by a unit, the condition becomes 
$$
\tilde{\theta}(af^{-\lfloor \frac{im}{n} \rfloor}t^i)f^{-\lfloor \frac{lm}{n} \rfloor}t^l
\subseteq A  f^{-\lfloor \frac{(i+l)m}{n} \rfloor}t^{i+l}.
$$
From above, this is equivalent to 
$$
\theta(a)+a \frac{i}{n} \frac{\theta(u)}{u}+a\{\frac{im}{n}\} \frac{\theta(f)}{f} \in 
Af^{\lfloor \frac{im}{n} \rfloor+ \lfloor \frac{lm}{n} \rfloor-\lfloor \frac{(i+l)m}{n} \rfloor}.
$$
Set $i=0$. The condition becomes $\theta(a)\in A$. That is $\theta\in \Der_k(A)$.
With this assumption, the condition becomes 
$$
\{\frac{im}{n}\} \frac{\theta(f)}{f} \in Af^{\lfloor \frac{im}{n} \rfloor+ \lfloor \frac{lm}{n} 
\rfloor-\lfloor \frac{(i+l)m}{n} \rfloor} \ (0\le i\le n-1).
$$
Note $\lfloor \frac{im}{n} \rfloor+ \lfloor \frac{lm}{n} \rfloor-\lfloor \frac{(i+l)m}{n} \rfloor$ is 
$-1$ or $0$. The latter case happens if and only if $\{ \frac{im}{n} \}+ \{ \frac{lm}{n} \}<1$.
If $\frac{m}{n}\in \Z$, this always holds. Else, let $\frac{m}{n}=\frac{m'}{n'}$ be the reduced
form, with $n'>1$. Suppose $\{ \frac{lm}{n} \}=1-\frac{1}{n'}$. If $\{ \frac{im}{n} \}+ \{ \frac{lm}{n} \}<1$,
then $\{ \frac{im}{n} \}=0$, so the condition holds again. Suppose $\{ \frac{lm}{n} \}\ne 1-\frac{1}{n'}$.
Equivalently, $\{ \frac{lm}{n} \}<1-\frac{1}{n'}$.
Recall from Lemma~\ref{exp} that $\{ \frac{jm}{n} \}=\frac{1}{n'}$. The condition for $i=j$ becomes 
$
\frac{\theta(f)}{f} \in A.
$

d) We claim that $\tilde{\theta}f^{-\lfloor \frac{lm}{n} \rfloor}t^l$ is a regular and logarithmic at 
$E'$ if and only if $\theta$ is regular and logarithmic at $E$. Indeed, a local parameter at $E'$ is 
$t_2=f^{-\lfloor \frac{jm}{n}\rfloor}t^j$
(recall $1\le j\le n, jm\equiv g\mod n$). We compute
\begin{align*}
\frac{\tilde{\theta}f^{-\lfloor \frac{lm}{n} \rfloor}t^l(t_2)}{t_2} & =
(\frac{j}{n}\frac{\theta(u)}{u}+ \{ \frac{jm}{n} \} \frac{\theta(f)}{f})f^{-\lfloor \frac{lm}{n} \rfloor}t^l  \\
 & =
(\frac{j}{n}\frac{\theta(u)}{u}+\frac{1}{n'} \frac{\theta(f)}{f})f^{-\lfloor \frac{lm}{n} \rfloor}t^l
 \end{align*}
 The last term is regular at $E'$ if and only if 
 $\frac{j}{n}\frac{\theta(u)}{u}+\frac{1}{n'} \frac{\theta(f)}{f} \in A$.
 That is $\frac{\theta(f)}{f} \in A$. From above, the claim holds.
\end{proof}


\subsection{Toroidal criterion}


\begin{prop}\label{nru} With the notations of section 2-A, let $X=T_N\emb(\Delta)$ be a 
torus embedding. The torus character $\chi^v$
becomes a rational function $\varphi$ on $X$. 
\begin{itemize}
\item[a)] Suppose $n'=n$. Then the normalized $n$-th root of $\varphi$ 
is the toric morphism 
$$
X'=T_{N'}\emb(\Delta)\to X=T_N\emb(\Delta)
$$ 
induced by the inclusion $N'\subseteq N$, endowed with the rational function 
$\chi^{\frac{v}{n}}$ on $X'$.
\item[b)] Suppose $k$ contains $\mu_n$. Then the normalized $n$-th root of $\varphi$ has 
$d$ irreducible components, each of them isomorphic over $X$ to the toric morphism
$
T_{N'}\emb(\Delta)\to T_N\emb(\Delta),
$ 
induced by the inclusion $N'\subseteq N$.
\end{itemize}
\end{prop}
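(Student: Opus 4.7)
The plan is to prove (a) by identifying $f_*\cO_{X'}$ with the $\cO_X$-algebra $\cA(X,\varphi,n)$ from the construction of the normalized root (so that $X' = \Spec_X f_*\cO_{X'} \cong \Spec_X \cA(X,\varphi,n) = X[\varphi,n]$), and to deduce (b) from (a) together with the general disjoint decomposition of $X[\chi^v,n]$ for irreducible $X$ stated in Section~3 just after Lemma~\ref{invun}.

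For (a), let $f\colon X' = T_{N'}\emb(\Delta) \to X = T_N\emb(\Delta)$ be the toric morphism induced by $N' \subseteq N$. On each affine chart $T_N\emb(\sigma)$ this corresponds to the inclusion of semigroup algebras $k[\sigma^\vee \cap M] \subseteq k[\sigma^\vee \cap M']$. Because $n'=n$, every element of $M'$ admits a unique representation $m + \tfrac{i}{n}v$ with $m \in M$ and $0 \le i \le n-1$, and the condition $m + \tfrac{i}{n}v \in \sigma^\vee$ is equivalent, since $\langle m, e\rangle \in \Z$, to $\langle m, e\rangle \ge -\lfloor \tfrac{i}{n}\langle v,e\rangle\rfloor$ for every ray generator $e$ of $\sigma$. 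Combined with the toric divisor formula $\dv(\chi^v) = \sum_e \langle v,e\rangle D_e$, this identifies the $i$-th graded piece of $k[\sigma^\vee \cap M']$ (spanned by the $\chi^{m+iv/n}$) with $\Gamma(T_N\emb(\sigma), \cO_X(\lfloor \tfrac{i}{n}\dv(\varphi)\rfloor))$ (spanned by the $\chi^m$). These chart-wise identifications are compatible with restriction to faces, so they glue to a graded $\cO_X$-module isomorphism $f_*\cO_{X'} \isoto \cA(X,\varphi,n)$, and the multiplication is preserved because $\chi^{iv/n}\cdot \chi^{jv/n} = \chi^{(i+j)v/n}$ lies in the $(i+j)$-th piece when $i+j<n$ and equals $\varphi \cdot \chi^{(i+j-n)v/n}$ when $i+j \ge n$, matching exactly the twist defining $\cA$. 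Applying $\Spec_X$ gives an isomorphism $X' \isoto X[\varphi,n]$ over $X$ under which $\chi^{v/n}$, sitting in the first graded piece, corresponds to $\sqrt[n]{\varphi}$.

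For (b), since $X$ is irreducible, $\chi^v = (\chi^{v'})^d$ with $d=n/n'$ is the maximal expression of $\chi^v$ as a $d$-th power in $k(X)^\times$: any $\tilde\psi \in k(X)^\times$ with $\tilde\psi^{d'} = \chi^v$ must lie in the unit group $k^\times \cdot M$ of the Laurent polynomial ring $k[M]$ by a UFD argument, forcing $d' \mid n/n'$. The general decomposition stated after Lemma~\ref{invun} then yields
$$
X[\chi^v, n] = \bigsqcup_{\zeta \in \mu_d} X[\zeta \chi^{v'}, n'],
$$
with each component isomorphic over $X$ to $X[\chi^{v'}, n']$. Applying (a) to the pair $(v',n')$---valid because $\tfrac{i}{n'}v' \notin M$ for $0 < i < n'$ and $M + \Z\tfrac{v'}{n'} = M + \Z\tfrac{v}{n} = M'$---identifies each component with the toric morphism $T_{N'}\emb(\Delta) \to T_N\emb(\Delta)$.

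The main obstacle lies in (a), namely upgrading the chart-wise combinatorial bijection of graded pieces into a global $\cO_X$-algebra isomorphism. Module-level gluing is routine once the bijection is checked to commute with restriction to faces; the algebra check reduces to the small arithmetic identity for products of characters noted above; and the final $\Spec_X$ step is formal. The substantive combinatorial content is simply that $\lfloor \tfrac{i}{n}\langle v,e\rangle\rfloor$ captures exactly the obstruction for $m + \tfrac{i}{n}v$ to lie in $\sigma^\vee$.
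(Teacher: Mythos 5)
Your proof is correct, but it takes a genuinely different route from the paper's. For a), you compute the pushforward algebra of the toric morphism $f\colon T_{N'}\emb(\Delta)\to T_N\emb(\Delta)$ chart by chart, using the unique representation $m'=m+\tfrac{i}{n}v$ and the observation that $m+\tfrac{i}{n}v\in\sigma^\vee$ is equivalent to $\langle m,e\rangle\ge -\lfloor\tfrac{i}{n}\langle v,e\rangle\rfloor$ on ray generators, and you match the result with the defining algebra $\cA(X,\varphi,n)=\oplus_{i=0}^{n-1}\cO_X(\lfloor\tfrac{i}{n}\dv(\varphi)\rfloor)$ of the normalized root, so that $X'\isoto\Spec_X\cA(X,\varphi,n)$ over $X$ with $\chi^{v/n}\mapsto\sqrt[n]{\varphi}$. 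The paper instead restricts to the open torus, where the normalized root coincides with the plain root of the unit $\varphi|_{T_N}$ and is described by Proposition~\ref{ru} as $T_{N'}\to T_N$, and then identifies $Y$ as the normalization of $X$ in $k(T_{N'})$, which is $T_{N'}\emb(\Delta)$; this is shorter because it leans on Proposition~\ref{ru} and on the normalization description built into the construction of $X[\varphi,n]$, while your argument is more computational but self-contained and has the side benefit of exhibiting explicitly the eigensheaf decomposition $f_*\cO_{X'}=\oplus_i\cO_X(\lfloor iD\rfloor)$ (Lemma~\ref{bnr}.a in the toric case). For b) the two arguments are closer: the paper argues componentwise over the torus via Proposition~\ref{ru}.b, whereas you invoke the decomposition stated after Lemma~\ref{invun} together with a verification, via the unit group $k^\times\cdot\{\chi^m\}$ of $k[M]$, that $d=n/n'$ is indeed the maximal divisor for which $\chi^v$ is a $d$-th power in $k(X)^\times$ --- a check that is genuinely needed to apply that decomposition and that you carry out correctly, including the reduction of the pair $(v',n')$ back to case a).
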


\begin{proof} Let $\pi\colon Y\to X$ be the normalized root of $\varphi$.
Then $\pi\colon \pi^{-1}(T_N)\to T_N$ is the normalized $n$-th root of $\varphi|_{T_N}$.
Since $\varphi|_{T_N}$ is a unit, this coincides with the $n$-th root of $\varphi|_{T_N}$,
which is described by Proposition~\ref{ru}.
In case a), $\pi^{-1}(T_N)\to T_N$ is isomorphic over $T_N$ with $T_{N'}\to T_N$.
Therefore $Y$ is the normalization of $X$ with respect to the field extension 
$k(T_N)\to k(T_{N'})$, which is exactly $T_{N'}(\emb\Delta)\to T_N\emb(\Delta)$.
A similar argument works in case b), for each irreducible component of $Y$.
\end{proof}

\begin{thm}\label{tcr}
Let $k$ be an algebraically closed field. Let $U\subseteq X$ be a quasi-smooth
toroidal embedding defined over $k$. Let $\varphi$ be an invertible rational 
function on $X$, let $n\ge 1$ such that $\Char k\nmid n$. Let 
$D=\frac{1}{n}\dv(\varphi)$, and suppose $D|_U$ has integer coefficients.
Let $\pi\colon Y\to X$ be the normalized $n$-th root of $\varphi$. Then 
$\pi^{-1}(U)\subseteq Y$ is a quasi-smooth toroidal embedding,
and $\pi$ is a toroidal morphism.
\end{thm}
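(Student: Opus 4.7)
The plan is to prove both assertions étale-locally on $Y$ by reducing to the case where $\pi$ is the normalized $n$-th root of a torus character on an affine toric variety, where the result is supplied by Proposition~\ref{nru}. Fix a point $P' \in Y$ with image $P = \pi(P') \in X$.

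\textbf{Step 1: Artin étale hut to a toric model.} Since $U \subseteq X$ is toroidal at $P$, Artin approximation yields an étale hut
\[ \xymatrix{ X & X' \ar[l]_u \ar[r]^v & Z } \]
with $Z = T_N\emb(\sigma)$ an affine toric variety, $u,v$ étale at a chosen preimage of $P$, and $u^{-1}(\Sigma_X) = \Sigma_{X'} = v^{-1}(\Sigma_Z)$. By Lemma~\ref{bnr}.d, the base change $\pi \times_X X'$ is the normalized $n$-th root of $u^*\varphi$, and being toroidal (both for the target embedding and the morphism) is étale-local, so we may work with this base change.

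\textbf{Step 2: Express $u^*\varphi$ as a character times an $n$-th power.} Let $E'_1,\dots,E'_r$ be the invariant primes of $X'$ through $P'$, corresponding via $v$ to invariant primes of $Z$ with primitive ray generators $e_1,\dots,e_r \in N$. Set $a_j = \ord_{E'_j}(u^*\varphi)$. By the hypothesis on $D|_U$, every prime of $X$ meeting $U$ has $\ord(\varphi)$ divisible by $n$; transferring to $X'$ and using that $u^*\varphi$ is a rational function (so its divisor is Cartier, hence principal in the stalk at $P'$), one shows via the formal identification $\hat{\cO}_{X',P'} \simeq \hat{\cO}_{Z,v(P')}$ that $(a_j) \in \mathrm{Im}(M \to \Z^r)$. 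Pick $m \in M$ with $\langle m, e_j\rangle = a_j$. After possibly further étale shrinking (absorbing the contribution of primes not in $\Sigma_{X'}$ into an $n$-th power of a rational function, and then extracting an $n$-th root of the remaining unit via Hensel's lemma applied formally and approximated by Artin), one obtains
\[ u^*\varphi = v^*\chi^m \cdot \psi^n \]
for an invertible rational function $\psi$ on a possibly smaller étale neighborhood.

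\textbf{Step 3: Apply the toric case.} By Lemma~\ref{invun}, the normalized $n$-th root of $u^*\varphi$ coincides with that of $v^*\chi^m$, which by Lemma~\ref{bnr}.d is the pullback along $v$ of the normalized root of $\chi^m$ on $Z$. By Proposition~\ref{nru}, this is a disjoint union of copies of the toric morphism $T_{N'}\emb(\sigma) \to T_N\emb(\sigma)$ induced by a sublattice $N' \subseteq N$ of finite index. The cone $\sigma$ remains simplicial in $N'_\R = N_\R$, so the source is again a quasi-smooth toric variety. Hence $\pi \times_X X'$ fits into an étale hut with this toric morphism near the chosen preimage of $P'$, which proves that $\pi^{-1}(U) \subseteq Y$ is a quasi-smooth toroidal embedding and that $\pi$ is toroidal at $P'$.

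\textbf{Main obstacle.} The delicate point is Step 2: verifying that $(a_j) \in \mathrm{Im}(M \to \Z^r)$ and then extracting $\psi$. For smooth toric models both are automatic, but in the merely simplicial (quasi-smooth) case the former relies essentially on the Cartier-ness of $\dv(\varphi)$ (so the prescribed orders at invariant primes actually come from a lattice element), and the latter on a careful interplay between formal Hensel extraction and Artin approximation so as to realize everything in a genuine étale neighborhood of $P'$ without contracting $P'$ away.
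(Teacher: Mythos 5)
Your overall skeleton — étale-localize via Artin to a simplicial affine toric model, reduce to the normalized root of a character (Proposition~\ref{nru}), and undo $n$-th power factors with Lemma~\ref{invun} and étale base change (Lemma~\ref{bnr}.d) — is the same as the paper's. But your Step 2, which you yourself flag as the main obstacle, is a genuine gap, and the specific recipe you sketch is in fact wrong. Choosing $m\in M$ with $\langle m,e_j\rangle=a_j=\ord_{E'_j}(u^*\varphi)$ and then ``absorbing the contribution of primes not in $\Sigma_{X'}$ into an $n$-th power'' forces the integral non-boundary part $D'$ of $D$ to be principal near $P'$, and on a quasi-smooth germ this can fail. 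Concretely, take $X=\Spec k[u,v,w]/(uw-v^2)=\bA^2/\{\pm 1\}$ (a simplicial toric surface), $n=2$, and $\varphi=u+2v+w=(s+t)^2$. Then $\dv(\varphi)=2L$, where $L$ is the image of the line $\{s+t=0\}$, so $D=L$ is integral and the boundary orders at the vertex are $(a_1,a_2)=(0,0)$; your recipe would demand $\varphi=\psi^2$ with $\psi$ rational near the vertex, i.e.\ that $L$ be principal there, but $[L]$ is the nontrivial element of $\Cl\cO_{X,0}\cong\Z/2$ (any $\psi$ with $\dv(\psi)=L$ would pull back on $\bA^2$ to $(s+t)\cdot(\text{unit})$, which cannot be $\mu_2$-invariant). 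For the same reason your justification ``$\dv(u^*\varphi)$ is Cartier, hence the boundary orders come from a lattice element'' does not work: Cartier-ness of the full divisor does not descend to its boundary part, and indeed $(a_j)$ need not lie in the image of $M$ at all.

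What is missing is precisely the ingredient the paper imports from Geraschenko--Satriano \cite[Example 5.10]{GS12}: the local class group $\Cl\cO_{X,P}$ is generated by the classes of the boundary primes through $P$. Granting this, one first chooses $\psi\in k(X)^*$ with $\dv(\psi)+D'$ supported on $X\setminus U$ (this uses that $D'$ is integral, i.e.\ the hypothesis on $D|_U$); then $\dv(\psi^n\varphi)$ is a principal divisor supported on the boundary, hence by the toroidal structure equals $\dv(\tau^*\chi^v)$ for some $v\in M$, so $u\psi^n\varphi=\tau^*\chi^v$ for a unit $u$, whose $n$-th root is extracted by the explicit étale cover $X[\sqrt[n]{u}]\to X$ (no formal Hensel/Artin step is needed there, since $\Char k\nmid n$). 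In my example this produces $\psi=u/(u+v)$ and $\psi^2\varphi=u$, with $\langle v,e_j\rangle=(2,0)\ne(a_j)$, showing that the correct character is found only after this class-group correction, not by matching the boundary orders of $\varphi$. So your reduction to Proposition~\ref{nru} is the right idea, but without the class-group generation statement the decomposition $u^*\varphi=v^*\chi^m\cdot\psi^n$ you need simply does not exist in the form you prescribe.
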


\begin{proof} Let $P\in X$ be a point. By Lemma~\ref{bnr}.d), 
we may replace $X$ by an \'etale neighborhood of $P$.
By~\cite[Corollary 2.6]{Ar69}, we may suppose there exists an \'etale morphism 
$$
\tau\colon X\to Z,
$$
where $Z=T_N\emb(\sigma)$ is an affine torus embedding defined over $k$,
the cone $\sigma$ is simplicial, and $U=\tau^{-1}(T_N)$. By~\cite{KKMS}, 
$U\subseteq X$ is a strict toroidal embedding.

Let $D'$ be the part of $D$ which is not supported by $X\setminus U$. It has integer
coefficients, by assumption. By~\cite[Example 5.10]{GS12}, $\Cl \cO_{X,P}$ is 
generated by the primes of $X\setminus U$ passing through $P$. Therefore there exists
$\psi\in k(X)^*$ such that $\dv(\psi)+D'$ is zero on $U$.
Then 
$
\dv(\psi^n\varphi)
$
is supported by $X\setminus U$. By~\cite{KKMS}, there exists $v\in N^*$ such that
$$
\dv(\psi^n\varphi)=\dv(\pi^*\chi^v).
$$
That is $ u\psi^n\varphi =\pi^*\chi^v$ for some unit $u$.
After the \'etale base change $X[\sqrt[n]{u}]\to X$, we may suppose $u=w^n$
for some unit $w$. Therefore 
$$
(w\psi)^n \varphi=\pi^*\chi^v.
$$
By Lemma~\ref{nru}.b), the normalized $n$-th root of $\chi^v$ is a toroidal morphism.
The total space is again quasi-smooth, since $\sigma$ is simplicial. By \'etale base 
change, the normalized $n$-th root of $\pi^*\chi^v$ is also toroidal and quasi-smooth.
By Lemma~\ref{invun}, the normalized $n$-th root of $(w\psi)^n \varphi$ is isomorphic to
the normalized $n$-th root of $\varphi$.
\end{proof}

\subsection{Comparison with roots of sections}

Let $X/k$ be a normal variety. 

Suppose $f\in \Gamma(X,\cO_X)$ does not divide zero.
Then $X[f,n]$ coincides with the normalization of $X[\sqrt[n]{f}]$ (root of regular function).
If $f$ is a unit on $X$, the root is already normal, and therefore $X[f,n]=X[\sqrt[n]{f}]$.

Let $\varphi$ be an invertible rational function on $X$. Let $V=X\setminus \Supp D$ be
the (open dense) locus where $\varphi$ is a unit. The restriction of $X[\varphi,n]\to X$ to $V$ 
coincides with the $n$-th root of the unit $\varphi|_V$. Therefore $X[\varphi,n]$ is obtained
by normalizing $X$ in the function field of each irreducible component of $V[\sqrt[n]{\varphi|_V}]$.

Let $\cL$ be an invertible $\cO_X$-module. Let $s\in \Gamma(X,\cL^n)$ be non-zero.
Choose an open dense subset $U\subseteq X$ such that $\cL|_U$ has a nowhere zero section $u$.
Let $s|_U=\varphi u^n$ with $\varphi\in \Gamma(U,\cO_U)$. Then the normalization
of $X[\sqrt[n]{s}]$ coincides with $X[\varphi,n]$. 


\section{Index one covers of torsion divisors}


Let $k$ be an algebraically closed field. Let $X/k$ be an irreducible, 
normal algebraic variety. Let $D$ be a $\Q$-Weil divisor on $X$ which is {\em torsion}, that is 
$iD\sim 0$ for some $i\ge 1$. The {\em index} of $D$ is 
$$
r=\min\{i\ge 1;iD\sim 0\}.
$$
We suppose $\Char k\nmid r$.
Choose a non-zero rational function $\varphi\in k(X)^\times$ such that $\dv(\varphi)=rD$.

\begin{lem}
The polynomial $T^r-\varphi\in k(X)[T]$ is irreducible.
\end{lem}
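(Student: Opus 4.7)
The plan is to reduce the irreducibility of $T^r-\varphi$ to the minimality of the torsion index $r$, via the classical Capelli criterion for the reducibility of binomials $T^n-a$. This criterion was already invoked in the proof of Lemma~\ref{de} and repackaged in Lemma~\ref{inc}.1) (and in Proposition~\ref{ic}.a)): over a field $K$ containing a root of $T^4+4$, the polynomial $T^n-a$ is irreducible as soon as $T^d-a$ has no root in $K$ for every divisor $1<d\mid n$. The hypothesis on $T^4+4$ is free here because $k(X)$ contains the algebraically closed field $k$ and $T^4+4=(T^2-2T+2)(T^2+2T+2)$.

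First I would check the hypothesis of Lemma~\ref{inc}.1) for $K=k(X)$ and $a=\varphi$. Concretely, assume for contradiction that there exists a divisor $1<d\mid r$ together with $\psi\in k(X)^\times$ such that $\psi^d=\varphi$. Then
\[
d\,\dv(\psi)=\dv(\psi^d)=\dv(\varphi)=rD,
\]
so, setting $r'=r/d\in\Z_{\ge 1}$, we obtain $\dv(\psi)=r'D$, which means $r'D\sim 0$. Since $1\le r'<r$, this contradicts the minimality of $r$ in the definition of the index.

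Hence $T^d-\varphi$ has no root in $k(X)$ for every divisor $1<d\mid r$, and Lemma~\ref{inc}.1) (applied with $A=Q=k(X)$, so that $A$ is trivially integrally closed in $Q$) yields that $k(X)[T]/(T^r-\varphi)$ is an integral domain; equivalently, $T^r-\varphi$ is irreducible in $k(X)[T]$.

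There is no real obstacle: the whole argument is the observation that a nontrivial factorization of $\varphi$ as a $d$-th power would produce a torsion relation $r'D\sim 0$ of strictly smaller order, and the Capelli-type input has already been recorded in the excerpt. The only thing worth being careful about is citing the right earlier statement to absorb the accessory $T^4+4$ hypothesis, which is automatic because $k\subset k(X)$ is algebraically closed.
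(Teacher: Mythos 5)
Your proof is correct and follows essentially the same route as the paper: the paper likewise observes that a root $\psi$ of $T^d-\varphi$ would give $\dv(\psi)=\frac{r}{d}D$, hence $\frac{r}{d}D\sim 0$, contradicting the minimality of the index, and then invokes the Capelli-type statement (the paper cites Lemma~\ref{de} directly, you route it through Lemma~\ref{inc}.1), which is just its corollary). The remark that the auxiliary $T^4+4$ hypothesis is automatic since $k\subset k(X)$ is algebraically closed is a harmless extra detail the paper leaves implicit.
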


\begin{proof} We may apply Lemma~\ref{de}. Suppose $1\le d\mid n$ and 
$T^d-\varphi$ has a root $\psi\in K$. Then $\dv(\psi)=\frac{r}{d}D$, hence
$\frac{r}{d}D\sim 0$. The minimality of $r$ implies $d=1$.
\end{proof}

We deduce that $k(X)[T]/(T^r-\varphi)$ is a field, denoted $k(X)(\sqrt[r]{\varphi})$.
The Kummer field extension 
$$
k(X)\subset k(X)(\sqrt[r]{\varphi})
$$ 
has Galois group $\mu_r$. Let $\psi$ be a root of $T^r-\varphi$ in this extension. 
The Galois group action induces an eigenspace decomposition
$$
k(X)(\sqrt[r]{\varphi})=\oplus_{i=0}^{r-1}k(X)\cdot \psi^i.
$$
Let $\pi\colon Y\to X$ be the normalization of $X$ in the Kummer extension.
By construction, $Y/k$ is an irreducible, normal algebraic variety with quotient 
field $k(X)(\sqrt[r]{\varphi})$. The root $\psi$ identifies with a rational function 
on $Y$ such that $\psi^r=\pi^*\varphi$. In particular, 
$$
\dv(\psi)=\pi^*D.
$$ 
So $\pi^*D$ is linearly trivial on $Y$. The morphism $\pi$ is finite, determined as follows:

\begin{lem} The Galois group $\mu_r$ acts on $Y$ relative to $X$. The eigenspace
decomposition is 
$$
\pi_*\cO_Y=\oplus_{i=0}^{r-1}\cO_X(\lfloor iD\rfloor)\cdot \psi^i.
$$
\end{lem}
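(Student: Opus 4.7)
The statement is essentially the irreducible specialization of Lemma~\ref{bnr}(a,b): since $T^r-\varphi$ is irreducible in $k(X)[T]$ by the previous lemma, the normalized $r$-th root $X[\varphi,r]$ is irreducible and agrees by construction with the normalization $\pi\colon Y\to X$ of $X$ in the Kummer extension, so the decomposition and the $\mu_r$-action may be invoked from there. Nevertheless, a direct argument is short and I would give it as follows.

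First, the Kummer extension $k(X)\subset k(X)(\sqrt[r]{\varphi})$ carries a natural action of $\mu_r$ fixing $k(X)$, with a generator $\zeta$ sending $\psi$ to $\zeta\psi$. Any $k(X)$-automorphism of this field preserves the integral closure of $\cO_X$ over any open $U\subseteq X$; hence the $\mu_r$-action descends to an action on $Y$ over $X$. Since $\Char k\nmid r$, the group algebra $k[\mu_r]$ is semisimple, and $\pi_*\cO_Y$ decomposes as a direct sum of eigensheaves $\cF_0\oplus\cdots\oplus \cF_{r-1}$, where $\zeta$ acts on $\cF_i$ by multiplication by $\zeta^i$. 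Each $\cF_i$ sits inside the weight-$i$ piece $k(X)\cdot\psi^i$ of $k(Y)$, so $\cF_i=\cI_i\cdot\psi^i$ for a uniquely determined coherent fractional $\cO_X$-submodule $\cI_i\subset k(X)$, and it remains to identify $\cI_i=\cO_X(\lfloor iD\rfloor)$.

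The identification is local on $X$. For any open $U\subseteq X$ and any $f\in k(X)$, I claim $f\psi^i\in \Gamma(\pi^{-1}(U),\cO_Y)$ if and only if $f\in\Gamma(U,\cO_X(\lfloor iD\rfloor))$. Since $Y$ is normal, regularity of $f\psi^i$ on $\pi^{-1}(U)$ is equivalent to integrality of $f\psi^i$ over $\cO_X(U)$. Now $(f\psi^i)^r=f^r\varphi^i\in k(X)$: on the one hand, if $f^r\varphi^i\in \cO_X(U)$, then $f\psi^i$ satisfies the monic equation $T^r-f^r\varphi^i=0$ over $\cO_X(U)$; on the other hand, if $f\psi^i$ is integral over $\cO_X(U)$, so is $f^r\varphi^i\in k(X)$, and since $X$ is normal $\cO_X(U)$ is integrally closed in $k(X)$, forcing $f^r\varphi^i\in \cO_X(U)$. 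So integrality reduces to $f^r\varphi^i\in \cO_X(U)$, i.e.\ $\dv(f^r\varphi^i)=r\dv(f)+irD\ge 0$ on $U$, which (dividing by $r$) is equivalent to $\dv(f)+iD\ge 0$ on $U$. Because $\dv(f)$ has integer coefficients, this is in turn equivalent to $\dv(f)+\lfloor iD\rfloor\ge 0$ on $U$, that is, $f\in\Gamma(U,\cO_X(\lfloor iD\rfloor))$.

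The only delicate point is the equivalence in the previous paragraph between integrality of $f\psi^i$ over $\cO_X(U)$ and $f^r\varphi^i\in \cO_X(U)$, which rests squarely on the normality of $X$. Everything else is bookkeeping with Weil divisors and the round-down operation.
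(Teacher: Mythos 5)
Your proof is correct and follows essentially the same route as the paper: decompose $\pi_*\cO_Y$ into $\mu_r$-eigensheaves $\cF_i\cdot\psi^i$ and identify $\cF_i=\cO_X(\lfloor iD\rfloor)$ by a local normality argument. The only (harmless) variation is that you test membership by passing to the $r$-th power $f^r\varphi^i$ and using that $\cO_X(U)$ is integrally closed in $k(X)$, whereas the paper checks directly on $Y$ that regularity of $a\psi^i$ is equivalent to $\dv(\pi^*a)+i\pi^*D\ge 0$; both reduce to $\dv(a)+iD\ge0$, i.e.\ $a\in\cO_X(\lfloor iD\rfloor)$.
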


\begin{proof} We have $\pi_*\cO_Y=\oplus_{i=0}^{r-1}\cF_i\cdot \psi^i$ for
some subspaces $\cF_i\subset k(X)$. Locally on $X$, a non-zero rational function 
$a\in k(X)^\times$ belongs to $\cF_i$ if and only if $\pi^*a \cdot \psi^i\in \cO_Y$. 
Since $Y$ is normal, this is equivalent to $\dv(\pi^*a)+i\pi^*D\ge 0$, that is 
$\dv(a)+iD\ge 0$, or $a\in \cO_X(\lfloor iD \rfloor)$. Therefore $\cF_i=\cO_X(\lfloor iD \rfloor)$.
\end{proof}

We deduce that $(Y/X,\psi)$ is the normalized $r$-th root of $X$ with respect to $\varphi$.

Call $\pi\colon Y\to X$ the {\em index one cover associated to the torsion $\Q$-divisor $D$}.
It depends on the choice of $\varphi$. If $\varphi_1,\varphi_2$ are two choices, they differ
by a unit $u\in \Gamma(X,\cO_X^\times)$, and the two associated morphisms $Y_i\to X \ (i=1,2)$
become isomorphic after base change with the \'etale covering $X[\sqrt[r]{u}]\to X$.
If $X/k$ is proper, it follows that $Y_i\to X\ (i=1,2)$ are isomorphic, and therefore
$\pi$ does not depend on the choice of $\varphi$.

Let $D'=D+(f)$. Then $D'$ is again torsion, of the same index.
We have $rD'=(\varphi f^r)$ and $(\psi f)^r=\varphi f^r$. Therefore the Kummer
field is the same, so $Y\to X$ is also an index one cover of $D'$.
In conclusion, for two linearly equivalent torsion $\Q$-divisors, one may
choose isomorphic index one covers. In general, any two become isomorphic
after an \'etale base change of $X$ (taking the $r$-th root of some global unit).

Index one covers do not commute with restriction to open subsets, since
the index may drop after restricting to an open subset.


\end{document}